\pdfoutput=1
\documentclass{amsart}
\usepackage{booktabs}
\usepackage{amssymb}
\makeindex

\usepackage{hyperref}
\hypersetup{
    colorlinks,%
    citecolor=blue,%
    filecolor=blue,%
    linkcolor=blue,%
    urlcolor=blue
}

\usepackage{longtable}
 \usepackage[all,cmtip]{xy}
\usepackage{tikz}
\usepackage{todonotes}
\usepackage[backrefs,lite]{amsrefs}
\usepackage{enumerate}
\usepackage{verbatim}
\usepackage{multirow}
\usepackage{pgf,tikz}
\usetikzlibrary{arrows}
\usepackage{tkz-graph}
\usepackage{mathtools}
\usepackage[normalem]{ulem}
\usepackage{enumitem}
\usepackage{flafter}
\usepackage{caption}
\CompileMatrices
\newtheorem*{introthm}{Theorem}
\newtheorem*{introprop}{Proposition}

\newtheorem{theorem}{Theorem}[section]
\newtheorem{lemma}[theorem]{Lemma}

\newtheorem{proposition}[theorem]{Proposition}
\newtheorem{corollary}[theorem]{Corollary}
\theoremstyle{definition}
\newtheorem{definition}[theorem]{Definition}
\newtheorem{example}[theorem]{Example}

\newtheorem{remark}[theorem]{Remark}

\newtheorem{notation}{Notation}
\newtheorem{assumption}{Assumption}

\def\cc{{\mathbb C}}

\def\zz{{\mathbb Z}}
\def\rr{{\mathbb R}}

\def\qq{{\mathbb Q}}
\def\pp{{\mathbb P}}

\def\Osh{{\mathcal O}}

\def\Conv{\operatorname{Conv}} 
\def\Cone{\operatorname{Cone}}

\def\Conv{\operatorname{Conv}} 
\def\Cl{\operatorname{Cl}}

\begin{document}
\title[]{Calabi-Yau complete intersections  associated to good pairs of generalized nef partitions
}

\author{Michela Artebani}
\address{
Departamento de Matem\'atica, \newline
Universidad de Concepci\'on, \newline
Casilla 160-C,
Concepci\'on, Chile}
\email{martebani@udec.cl}
\author{Paola Comparin}
\address{
Departamento de Matem\'atica y Estad\'istica, \newline
Universidad de La Frontera, \newline
Temuco, Chile}
\email{paola.comparin@ufrontera.cl} 

\author{Robin Guilbot}
\address{D\'epartement de Math\'ematiques, \newline \'Ecole d'\'Economie de Toulouse, \newline Toulouse, France
}
\email{robin.guilbot@ut-capitole.fr}

\subjclass[2020]{14M25, 14J32, 14J17, 14M10, 14J28}
\keywords{Toric variety,  Calabi-Yau variety, quasismooth, complete intersection} 
\thanks{The first author has been partially  supported by Proyectos Fondecyt Regular 1160897 and 1211708,
the second author has been partially 
supported by Proyecto Fondecyt  N. 1240360 and is member of INdAM-GNSAGA}

\begin{abstract} 
We introduce the notion of good pair of generalized nef partitions to describe Calabi-Yau complete intersections in $\qq$-Fano toric varieties
whose equations do not necessarily have maximal Newton polytopes. Moreover, we define a natural duality between them which generalizes Batyrev-Borisov mirror duality and allows to define a generalization of Berglund-H\"ubsch-Krawitz duality to quasismooth complete intersections.
\end{abstract}

\maketitle
\tableofcontents

\section*{Introduction}
The foundational toric constructions of Calabi-Yau varieties begin with the observation that anticanonical hypersurfaces in Gorenstein Fano toric varieties have trivial canonical class. Batyrev's reflexive polytope construction \cite{Bat} and its extension to complete intersections by Batyrev-Borisov via nef-partitions \cite{BB} produced vast classes of Calabi-Yau hypersurfaces and complete intersections admitting mirror partners described purely in terms of dual polyhedral data. These constructions not only supplied many new examples, but also gave a precise and computable form of mirror symmetry: mirror pairs arise from duality of reflexive polytopes and, in the complete-intersection setting, from dual nef-partitions. In a different but related direction, the Berglund-H\"ubsch-Krawitz construction \cite{BH, Kr}, originating in Landau-Ginzburg models and invertible polynomials, produces mirror partners of hypersurfaces in fake weighted projective spaces by transposition of exponent matrices, together with an orbifolding procedure.  
In \cite{ACG} we developed a geometric framework to deal with families of Calabi-Yau hypersurfaces  
in toric $\qq$-Fano varieties  whose Newton polytope is not necessarily maximal.
More precisely, we defined the notion of good pair of polytopes to encode this type of families and we defined a duality between such pairs which  generalizes both the Batyrev construction and the Berglund-H\"ubsch-Krawitz  construction.
 The purpose of this paper is to generalize this framework to complete intersection Calabi-Yau varieties.
 
As a first step, we provide a generalization of the notion of nef-partition, to include the case when the ambient space is $\mathbb Q$-Fano and the nef divisors associated to the partition are $\qq$-Cartier. A generalized nef partition is denoted by $\Pi(\Delta)=\{\Delta_1,\ldots,\Delta_s\}$, where  $\Delta$ is the Minkowski sum of $\Delta_1,\ldots,\Delta_s$.
Afterwards, we define good pairs of generalized nef partitions as pairs of nested generalized nef partitions $\mathcal P=(\Pi(\Delta_1),\Pi(\Delta_2))$, where $\Delta_1\subseteq\Delta_2$ form a good pair \cite{ACG}, i.e. $\Delta_1$ and $(\Delta_2)^\circ$ are canonical polytopes.
This framework is related to the construction of Calabi-Yau varieties as follows. 
Given a good pair of  
generalized nef partitions
 $(\Pi(\Delta^1),\Pi(\Delta^2))$, where  $\Pi(\Delta^1)=\{\Delta^1_1,\ldots,\Delta^1_s\}$ and $\Pi(\Delta^2)=\{\Delta^2_1,\ldots,\Delta^2_s\}$,
let $Z=Z_{\Delta^2}$ be the toric variety defined by the normal fan to $\Delta^2$, which is $\qq$-Fano with canonical singularities.
The generalized nef partition $\Pi(\Delta^2)$ provides a decomposition  
$-K_Z=\sum_{i=1}^s N_i$, where $N_i$ are toric $\qq$-Cartier nef divisors with associated polytopes $\Delta^2_1,\dots,\Delta_s^2$. 
For each $i$, the polytope $\Delta^1_i \subseteq\Delta_i^2$  identifies a subspace $\mathcal F(\Delta^2_i)$ of the complete linear system associated to $N_i$. We  call $g_i\in \mathcal R(Z)$ the equation of a general element in the subspace.

We define a good pair of generalized nef partitions to be {\em quasismooth} if the subvariety of $Z$ defined by $g_1=\dots=g_s=0$ is a quasismooth complete intersection, {\em irreducible} if $\Pi(\Delta_1)$ and $\Pi(\Delta_2)$ can not be decomposed as a direct sum (see \S\ref{irr}).

\begin{introthm}[see Theorem \ref{CY}]\label{thm_main1}
If $\mathcal P=(\Pi(\Delta^1),\Pi(\Delta^2))$ is an irreducible and quasismooth good pair of 
generalized nef partitions, then 
$g_1,\dots,g_s$ 
 define a family of Calabi-Yau varieties $\mathcal F_{\mathcal P}$ in the $\qq$-Fano toric variety $Z_{\Delta^2}$.
\end{introthm}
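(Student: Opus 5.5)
The plan is to check the Calabi-Yau conditions one at a time for a general member $X=\{g_1=\dots=g_s=0\}\subseteq Z=Z_{\Delta^2}$. We need trivial (or torsion-free trivial) canonical class, canonical, hence rational Gorenstein-type, singularities, and vanishing of $h^i(X,\Osh_X)$ for $0<i<\dim X$. We also need connectedness and normality, which is where irreducibility of the pair enters.

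\emph{Step 1: $X$ is a well-defined complete intersection of the expected dimension.} Work in the Cox ring $\mathcal R(Z)$ and the characteristic space $\hat Z\to Z$. The partition $\Pi(\Delta^2)$ gives $-K_Z=\sum N_i$, and each $g_i$ is a general section in the subspace $\mathcal F(\Delta^2_i)$ spanned by the monomials with exponents in $\Delta^1_i$. Quasismoothness means that $\hat X=V(g_1,\dots,g_s)\subseteq\hat Z$ is smooth of codimension $s$. Then $X=\hat X/H$, with $H$ the quasitorus, has only quotient singularities and dimension $\dim Z-s$. Since $\Delta^1$ and $(\Delta^2)^\circ$ are canonical, $\Delta^1$ contains the origin in its interior. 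I would use this, together with the fact that $\Delta^1=\sum\Delta^1_i$ is a Minkowski sum, to show that each $\Delta^1_i$ contains the origin. Hence each $g_i$ contains a nonzero monomial that is generic in the $N_i$-degree, so $g_i\neq 0$.

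\emph{Step 2: adjunction.} Because $\hat X$ is a smooth complete intersection in $\hat Z$ cut out by homogeneous equations of degrees $[N_i]$, the orbifold adjunction formula gives
\[
K_X=\Big(K_Z+\sum_i N_i\Big)\Big|_X=0
\]
as $\qq$-Cartier Weil divisor classes on the quasismooth $X$. To upgrade this to $\omega_X\cong\Osh_X$, I would construct a nowhere vanishing section via the Poincar\'e residue of the canonical toric form $\Omega_Z/(g_1\cdots g_s)$; here $\Omega_Z$ is the $H$-invariant form on $\hat Z$. This form is $H$-invariant precisely because $\sum\deg g_i=\deg\prod x_\rho$.

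\emph{Step 3: singularities.} $X$ has quotient singularities, so it is normal, $\qq$-factorial and klt. Since $K_X$ is Cartier and trivial, $X$ is Gorenstein with canonical singularities. Each local quotient group must lie in $SL$, which follows from the invariance of the residue form.

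\emph{Step 4: cohomology and connectedness.} This is where irreducibility is used, and I expect it to be the main obstacle. From the Koszul resolution of $\Osh_X$ by the sheaves $\Osh_Z(-\sum_{i\in I}N_i)$, it suffices to show
\[
H^j\Big(Z,\Osh_Z\big(-\textstyle\sum_{i\in I}N_i\big)\Big)=0
\]
for all $0<j<\dim Z$ and all $\emptyset\neq I\subsetneq\{1,\dots,s\}$, with the top-degree terms controlled as well. By Serre duality and toric vanishing for nef $\qq$-Cartier divisors, this reduces to showing that the partial sums $\sum_{i\in I}N_i$ are big whenever $I\neq\emptyset$, or rather that the corresponding polytopes $\sum_{i\in I}\Delta^2_i$ are full-dimensional. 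If some partial sum failed to be full-dimensional, the nef partition would split as a direct sum, which contradicts irreducibility. This gives $h^0(\Osh_X)=1$, so $X$ is connected, hence irreducible since it is normal, and also $h^i(\Osh_X)=0$ for $0<i<\dim X$. The delicate point is making this splitting argument precise with only $\qq$-Cartier nef divisors and with the non-maximal $\Delta^1_i$; the latter only affect generality, not the cohomology computation.
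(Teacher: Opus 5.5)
Your Steps 1--3 match the paper's argument in substance: adjunction on the smooth part, then quotient (or rational) singularities plus Gorenstein gives canonical. Two small points need correcting. First, extending $K_X\sim 0$ across $X\cap\operatorname{Sing}(Z)$ uses well-formedness, which is part of Definition~\ref{qsci}. Second, $0\in\Delta^1_i$ is just Proposition~\ref{car}(i). It does not follow from $0\in\operatorname{int}\Delta^1$ by a Minkowski-sum argument, since a sum can contain $0$ in its interior while no summand contains $0$.

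\textbf{The genuine gap is in Step 4.} Your reduction to bigness of the partial sums $N_I=\sum_{i\in I}N_i$ rests on the claim that a non-full-dimensional $\sum_{i\in I}\Delta^2_i$ forces a direct-sum splitting. That claim is false. Take $Z=\pp^1\times\pp^2$ with rays $\pm e_1,e_2,e_3,-e_2-e_3$, and let $I_1=\{e_1\}$ and $I_2$ be the other four rays.
\begin{itemize}
\item $\Delta_1=[-1,0]\times\{(0,0)\}$ is a segment, and $\Delta_2=[0,1]\times\Delta_{\pp^2}$, where $\Delta_{\pp^2}$ is the anticanonical triangle of $\pp^2$.
\item The origin is an endpoint of $\Delta_1$ and lies on a facet of $\Delta_2$, so the partition is irreducible.
\item With $\Delta^1=\Delta^2$ this is a quasismooth irreducible good pair whose members are elliptic curves.
\item Yet $N_1$, of class $(1,0)$, is not big.
\end{itemize}
So the Serre duality plus Kawamata--Viehweg route cannot be completed.

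\textbf{How to repair it.} Bigness is not what is needed. The right tool is Batyrev--Borisov vanishing: for a nef $\qq$-Cartier torus-invariant divisor $D$ on a complete toric variety, $H^q(Z,\Osh_Z(-D))=0$ for $q\neq\dim P_D$, and $h^{\dim P_D}(Z,\Osh_Z(-D))=\ell^*(P_D)$.
\begin{itemize}
\item Here $P_{N_I}=\sum_{i\in I}\Delta^2_i$. Since $\langle m,n_\rho\rangle\in\zz$, the only lattice point that can lie in its relative interior is $0$.
\item Hence irreducibility of $\Pi(\Delta^2)$ says exactly that $\ell^*(\sum_{i\in I}\Delta^2_i)=0$ for $\emptyset\neq I\subsetneq\{1,\dots,s\}$.
\item So all intermediate terms of the Koszul spectral sequence vanish. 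Only $H^0(\Osh_Z)$ and $H^n(\Osh_Z(K_Z))$ survive, the sequence degenerates, and you get $h^0(\Osh_X)=1$ and $h^i(\Osh_X)=0$ for $0<i<n-s$.
\end{itemize}
You should also justify exactness of the Koszul complex of the reflexive, generally non-invertible, sheaves $\Osh_Z(-N_I)$. It is the degree-zero part of $\hat p_*$ of the Koszul complex of the regular sequence $g_1,\dots,g_s$ on the smooth $\hat Z$, and both operations are exact.

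\textbf{Comparison with the paper.} With this repair your route is a valid alternative to the paper's. The paper does not use a Koszul complex on $Z$. Instead it:
\begin{itemize}
\item closes $X$ intersected with the torus inside $\mathbb P_{\Delta^1}$, where the $\Delta^1_i$ are base point free;
\item applies \cite[Corollary 3.5]{BB} there, which is where the $\ell^*$-conditions on the $\Delta^1_i$, i.e.\ irreducibility of $\Pi(\Delta^1)$, enter;
\item transfers the result to $X$ via Proposition~\ref{hi}, since both varieties have rational singularities.
\end{itemize}
Your version stays on $Z$, depends only on the classes $[N_i]$ and on irreducibility of $\Pi(\Delta^2)$, and avoids the birational comparison.
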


The above construction allows to define naturally a duality between families of Calabi-Yau complete intersections, i.e. given a good pair of generalized nef partitions  $\mathcal P=(\Pi(\Delta^1),\Pi(\Delta^2))$, one can consider the dual pair  $\mathcal P^*=(\Pi(\nabla^2),\Pi(\nabla^1))$, which still defines a family of Calabi-Yau varieties provided $\mathcal P^*$ is quasismooth. Observe that this is essentially Batyrev-Borisov duality when $\Delta_1=\Delta_2$ is reflexive.

In case the the good pair $\mathcal P$ is a Delsarte pair (see Definition \ref{delsarte}), 
the ambient spaces for both families $\mathcal F_\mathcal P$ and $\mathcal F_{\mathcal P^*}$ are fake weighted projectives spaces.
Thus, if quasismoothness holds on both sides, this is a generalization of the Berglund-H\"ubsch-Krawitz construction.
We apply the duality of good pairs to several examples and for some of them, such as an example by Libgober-Teitelbaum \cite{LT93} and a family of Schoen's type \cite{Schoen, HSS}, we recover mirror families already found in the literature.  
In the special case of codimension two K3 surfaces in weighted projective spaces we prove the following.

\begin{introprop}[see Proposition \ref{prop-codim2}]
  Let $\mathcal F_{\mathcal P}$ be a family of  K3 surfaces in a five dimensional weighted projective space  associated to a quasismooth good pair $\mathcal P$ of generalized nef partitions of Delsarte type. 
Then $\mathcal P^*$ is quasismooth and thus $\mathcal F_{\mathcal P^*}$ is a family of codimension two K3 surfaces in a fake weighted projective space.
\end{introprop}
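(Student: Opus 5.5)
The plan is to reduce the statement to a classification of the possible Delsarte data and then verify quasismoothness of the dual side case by case.

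\emph{Setup.} A Delsarte good pair of generalized nef partitions in a five-dimensional weighted projective space $\pp(w_0,\dots,w_5)$ giving K3 surfaces consists of two polynomials $g_1,g_2$, each a sum of monomials. Together they involve exactly six monomials, encoded by a $6\times 6$ exponent matrix $A$ split into two row blocks. The quasismooth codimension-two K3 condition should force the pair of polynomials to be a sum of invertible-type blocks. Here I would use the quasismoothness criterion for complete intersections stated earlier in the paper, via Newton polytope and coordinate-subspace conditions. The dual pair $\mathcal P^*$ corresponds to the transposed matrix $A^T$, with the transposed partition of columns into two blocks. It lives in a fake weighted projective space, as noted for Delsarte pairs.

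\emph{Key steps.}
\begin{enumerate}
\item First, I would show that quasismoothness of $\mathcal P$ forces strong combinatorial constraints. For every coordinate subset $I$, the restrictions of $g_1,g_2$ to the coordinate subspace $\{x_j=0,\ j\notin I\}$ must either vanish in a controlled way or have independent differentials. With only six monomials, each variable $x_i$ must appear in a monomial of the form $x_i^a$ or $x_i^a x_j$ in $g_1$ or $g_2$, as in the Kreuzer--Skarke argument. This makes $A$, up to permutation, block-decomposable into Fermat, chain and loop pieces, distributed between the two equations.
\item Next, I would enumerate how these atomic pieces can be split between $g_1$ and $g_2$ so that both are nef, that is, quasi-homogeneous of degrees $d_1,d_2$ with $d_1+d_2=\sum w_i$. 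I would also require each $g_j$ to contain its own set of variables, or to share variables through particular ``mixed'' monomials. This yields a finite list of configuration types. Irreducibility excludes direct-sum splittings that would not give a connected K3.
\item Then, I would observe that transposing $A$ sends Fermat, chain and loop blocks to blocks of the same types, with chains reversed. The row partition of the dual is given by the column partition of $\mathcal P$. I would check for each configuration type that the transposed system satisfies the same quasismoothness criterion. For the blocks internal to a single equation this is the classical Berglund--H\"ubsch statement. The only work is for monomials coupling the two equations.
\item Finally, since $\mathcal P^*$ is a good pair, Theorem~\ref{CY} applies once quasismoothness and irreducibility are known. Irreducibility is preserved by duality because a direct-sum decomposition of $\mathcal P^*$ transposes to one of $\mathcal P$. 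This gives the family $\mathcal F_{\mathcal P^*}$ of codimension-two K3 surfaces.
\end{enumerate}

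\emph{Expected obstacle.} The main difficulty is step 1 and the coupled cases in step 3. In codimension two a variable may satisfy the criterion through the Jacobian of both equations jointly, not through a single polynomial. The classification is then not simply a product of invertible polynomials. I would first try to bound the number of ``coupling'' variables using the degree identity $d_1+d_2=\sum w_i$ together with the K3 dimension count. If that is insufficient, I would fall back on a computer-assisted enumeration of the finitely many weight systems for quasismooth codimension-two K3 surfaces in $\pp^5$-type weighted spaces, checking quasismoothness of the transposes directly.
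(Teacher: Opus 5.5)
Your Kroeuzer--Skarke-style route fails at Step 1, and the setup under it is off. The Proposition in the body concerns $\pp(w_1,\dots,w_5)$, which is four-dimensional: in six variables a codimension-two complete intersection is a threefold, so the introduction's ``five dimensional'' means five coordinates. For a Delsarte pair, \eqref{eq-delsarte} gives $n+s+1=7$ monomials, not six. The square matrix is $A_{\mathcal P}$, the $7\times 7$ exponent matrix of $F=t_1g_1+t_2g_2$ in $t_1,t_2,x_1,\dots,x_5$. Its rows include the marked monomials $t_i\prod_{\rho\in I_i}x_\rho$, which correspond to the origins of the $\Delta^1_i$. Examples are $t_2x_3x_4x_5$ in the first line of Table~\ref{tab_P11111} and $t_1x_1x_2x_4$ in Example~\ref{exK3}; these are not Fermat, chain or loop monomials. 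Kreuzer--Skarke cannot be applied to $F$ either. It is singular along the threefold $\{t_1=t_2=0\}\times\overline X$, since there $\partial F/\partial t_i=g_i=0$ and every $\partial F/\partial x_j=0$, so $F$ is not an invertible polynomial. Quasismoothness of $Y\subset Z(E)$ only concerns points off the irrelevant locus, which contains $\{t_1=t_2=0\}$. Your one-variable condition is correct; it is the case $|I|=1$ of the Corollary after Theorem~\ref{cor-qs}. But its case (iv) lets a variable be served by one monomial in each equation, and a marked monomial such as $x_1x_2$ can serve two variables at once. So the Kreuzer--Skarke counting (one pointing monomial per variable, each monomial pointing at one variable) is unavailable, and the Fermat/chain/loop decomposition of $A$ does not follow.

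Step 3 therefore has nothing to transpose block by block. Under $A_{\mathcal P}\mapsto A_{\mathcal P}^T$, the $t$-columns become the marked monomials of $g_1^*,g_2^*$. The $x$-columns, which carry the entries of the marked rows, become the other dual monomials. So the coupling between the two equations is present in every case, not only in exceptional ones. Even for monomials internal to one equation, quasismoothness of a complete intersection is not the Berglund--H\"ubsch statement: strata in $B_1\cap B_2$ require the joint condition $k_J(g_1,g_2)\ge d_J+2$. Quasismoothness is also not preserved by this duality in general (Example~\ref{counterexample}), so a conceptual proof would need to use the Delsarte and K3 hypotheses in a way you have not identified. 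What proves the statement is your fallback, which is exactly the paper's proof. By Fletcher's list and Altinok--Brown--Reid, exactly $84$ vectors $(m,n,w_1,\dots,w_5)$ carry quasismooth codimension-two K3 surfaces. For each, one enumerates the compatible generalized nef partitions $(\Delta^2_1,\Delta^2_2)$ and the quasismooth Delsarte choices $\Delta^1_i\subseteq\Delta^2_i$ (seven vertices in total). One then checks quasismoothness and irreducibility of $\mathcal P^*$ by computer. Your proposal states this only vaguely, without the finite list, and for the wrong ambient dimension.
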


Quasismoothness is a key property throughout the paper, since it allows to control the singularities of the Calabi-Yau varieties. In section \S\ref{2.4} we provide a combinatorial characterization for quasismooth complete intersection based on the Cayley trick and results in \cite{ACGqs}.
Although quasismoothness is in general not preserved by the duality of good pairs (see Example \ref{counterexample}) we expect that it could be true in a more general setting and we intend to explore it in a future work.

The content of the paper can be summarized as follows.
Section \ref{sec-background} contains the preliminaries about complete intersections in toric varieties and the quasismoothness condition. In section \ref{sec-gnp} we introduce the notion of generalized nef partitions and the duality between them, while in section \ref{sec_gp} we define good pairs of generalized nef partitions and prove the main theorem. In particular, we explain the relation between polytopes and complete intersections and give several examples. Section \ref{codim2} contains the proof of the Proposition.
Finally, the Appendix contains the link to a webpage containing the ancillary MAGMA \cite{Magma} code and a short description of the main functions in it.

\section{Toric background}\label{sec-background}

We start introducing the basic language for complete intersections in toric varieties, see \cite{Khov,CLS, BB, antican, mavlyutov}. 
We will always work over the field of complex numbers.
\subsection{Toric setting}\label{sec_toric}
Let $N\cong \zz^n$ be a lattice, $M={\rm Hom}(N,\zz)$ be its dual and 
$N_{\qq}, M_{\qq}$ be their $\qq$-extensions. We will denote by $\langle\,,\,\rangle:M_\qq\times N_{\qq}\to \zz$ the natural bilinear pairing.
In what follows $Z=Z_{\Sigma}$ denotes the toric variety associated to a fan $\Sigma\subseteq N_{\qq}$. 
Fixed an order for the one dimensional cones $\rho_1,\dots,\rho_r$ of $\Sigma$, let $P:\zz^r\to N$ be the homomorphism 
which associates to $e_i$ the primitive generator $n_i\in N$ of $\rho_i$ and let $P^T$ be its dual. We recall that there is an exact sequence
\[
\xymatrix{
0\ar[r]& M\ar[r]^{P^{T}}& \zz^r\ar[r]^Q& \Cl(Z)\ar[r]& 0,
}
\]
where $Q(e_i)$ is the class of the prime toric divisor $D_i$ associated to $\rho_i$.
The {\em homogeneous coordinate ring} of $Z$ is the $\Cl(Z)$-graded polynomial ring 
\[
\mathcal R(Z)=\cc[x_1,\dots,x_r],
\]
where $\deg(x_i):=Q(e_i)$.
The homomorphism $p:\mathbb T^r\to \mathbb T^n$ between tori induced by $P^T$ extends to a morphism $\hat p:\hat Z\to Z$,  where $\hat Z$ is the toric variety whose fan is given by the cones of the positive orthant in $\qq^r$ whose image by $P_{\qq}$ is contained in a cone of $\Sigma$ \cite{CLS}.
The morphism $\hat p$, called {\em characteristic morphism} of $Z$, is a good quotient for the action of $H:={\rm Spec}(\cc[\Cl(Z)])$. We recall that $\hat Z$ is an open subset of $\bar Z={\rm Spec}\,\mathcal R(Z)$ whose complement has codimension at least two. 
The ideal in $\mathcal R(Z)$ defining $\bar Z\backslash \hat Z$ is the {\em irrelevant ideal} of $Z$.

\subsection{Laurent systems and their homogenization}
\label{subsec-polytopes}
Given a Laurent polynomial $f=\sum_{v\in M} c_vT^v\in \cc[M] \cong\cc[T_1^{\pm },\dots, T_n^{\pm}] $ we denote by $V(f)\subseteq \mathbb T^n$ its zero set and by $\Delta(f)$ its {\em Newton polytope} in $M_{\qq}\cong \qq^n$:
\[
\Delta(f)={\rm Conv}(v\in M: c_v\not=0).
\]
The {\em $\Sigma$-homogenization} of $f$ 
is the polynomial $g\in \mathcal R(Z)$ defined by
\[
(p^*f)(x_1,\dots,x_r)=x^\nu g(x_1,\dots, x_r),
\]
where $x^\nu\in \cc[x_1^{\pm },\dots, x_r^{\pm}]$ is a Laurent monomial and $g$ is a $\Cl(Z)$-homogeneous polynomial coprime to each of the variables $x_i$.

\begin{lemma} The $\Sigma$-homogenization $g$ of $f$ is a defining polynomial for the divisor $D$ which is the closure of $V(f)$ in $Z_{\Sigma}$.  
Moreover  $[D]=[\sum_{i=1}^r a_iD_i]\in \Cl(Z)$,
where $D_i$ is the toric prime divisor associated to $\rho_i$ and 
\[
(a_1,\dots,a_r):=(-\min_{u\in \Delta(f)}\langle u,n_1\rangle,\dots, -\min_{u\in \Delta(f)}\langle u_,n_r\rangle)=-\nu.
\]
\end{lemma}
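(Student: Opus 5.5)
We compute $p^*f$ explicitly and read off both claims from it. Since $p$ is induced by $P^T$, for $v\in M$ we have
\[
p^*(T^v)=\prod_{i=1}^r x_i^{\langle v,n_i\rangle}.
\]
Hence $p^*f=\sum_v c_v\prod_i x_i^{\langle v,n_i\rangle}$. The exponent of $x_i$ in a monomial of $p^*f$ is $\langle v,n_i\rangle$, and $v$ ranges over the exponents with $c_v\neq 0$. So the largest monomial $x^\nu$ dividing $p^*f$ in the Laurent sense has $\nu_i=\min_{v:\,c_v\neq0}\langle v,n_i\rangle$. Because a linear functional on a polytope attains its minimum at a vertex, this equals $\min_{u\in\Delta(f)}\langle u,n_i\rangle$.

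With this $\nu$, the quotient $g=x^{-\nu}p^*f$ is a polynomial not divisible by any $x_i$. Distinct $v$ give distinct exponent vectors $P^T v$, since $P^T$ is injective. So no cancellation occurs, and this is the unique factorization required in the definition. Homogeneity of $g$ follows because every monomial $x^{P^Tv-\nu}$ has degree $Q(P^Tv)-Q(\nu)=-Q(\nu)$, using $Q\circ P^T=0$. Thus $\deg g=Q(-\nu)=[\sum a_iD_i]$ with $a=-\nu$, which is the degree formula once we know $g$ defines $D$.

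It remains to show that $g$ cuts out the closure $D$ of $V(f)$. On the torus $\mathbb T^r\subseteq\hat Z$, the monomial $x^\nu$ is a unit. Hence $V(g)\cap \mathbb T^r=p^{-1}(V(f))$, which is the preimage of $V(f)\cap\mathbb T^n$ under the good quotient $\hat p$. The closure of $V(g)\cap\mathbb T^r$ in $\hat Z$ is all of $V(g)\cap\hat Z$, because $g$ is coprime to every $x_i$: no irreducible component of $V(g)$ is contained in a coordinate hyperplane $\{x_i=0\}$, which is irreducible of the same dimension.

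Since $\hat p$ is a good quotient, $H$-invariant closed sets map to closed sets. Therefore $\hat p(V(g)\cap\hat Z)$ is closed, contains $V(f)$ densely, and so equals $D$. The multiplicities should also agree, so that $g$ is a defining equation for $D$ as a divisor and not only for its support. Work on the torus $\mathbb T^n$, where $\hat p$ restricts to the quotient $\mathbb T^r\to\mathbb T^n$, which is a torsor. There the pullback of the divisor of $f$ is the divisor of $p^*f$, which coincides with the divisor of $g$. Since neither side has components in the boundary, the divisors agree everywhere.

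The step needing the most care is identifying $V(g)$ with the pullback of $D$ in codimension one. This uses the correspondence between $H$-invariant divisors on $\hat Z$ and Weil divisors on $Z$ (the standard statement that $\operatorname{div}$ of a homogeneous polynomial of degree $Q(a)$ descends to a divisor in class $[\sum a_iD_i]$, as in \cite{CLS}). I would invoke that result rather than reprove it.
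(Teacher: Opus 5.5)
Your argument is correct. The paper gives no proof here and only cites \cite[Lemma 3.3]{antican}, so your write-up is a self-contained substitute rather than a variant of an argument in the text.

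It is worth comparing with the shortest self-contained route, which stays on $Z$. The order of a Laurent polynomial along $D_i$ is the minimum of $\langle v,n_i\rangle$ over its monomials, so ${\rm div}(f)=D+\sum_i\nu_iD_i$. This gives $D\sim\sum_i a_iD_i$ immediately, multiplicities included, with no boundary components to rule out. Moreover, under the isomorphism $H^0(Z,\mathcal O_Z(\sum_i a_iD_i))\cong\mathcal R(Z)_{[\sum_i a_iD_i]}$, $\chi^m\mapsto x^{P^Tm+a}$, of \cite{CLS}, the section $f$ goes exactly to $x^{-\nu}p^*f=g$. Its zero divisor is ${\rm div}(f)+\sum_i a_iD_i=D$, which proves both claims at once.

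You do the same bookkeeping upstairs in $\hat Z$:
\begin{itemize}
\item the degree comes from $Q\circ P^T=0$;
\item the absence of boundary components comes from the coprimality of $g$ with the $x_i$, which is the upstairs counterpart of the valuation computation;
\item the support comes from closedness of images of invariant closed sets under the good quotient.
\end{itemize}
The price is that you still need the correspondence between $H$-invariant divisors on $\hat Z$ and Weil divisors on $Z$. You use it to transport multiplicities and to know that the descended divisor lies in the class $\deg g$. Once you invoke it, your set-theoretic good-quotient step is redundant.

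If you want to prove the flagged step rather than cite it, one way is as follows. Let $Z_0\subseteq Z$ be the union of the torus and the orbits of the rays; its complement has codimension $\geq 2$. Then $\hat p^{-1}(Z_0)=\{x:\text{at most one }x_i=0\}$, and $H$ acts freely on it. This holds because the classes $[D_j]$, $j\neq i$, generate $\Cl(Z)$, since $\Cl(U_{\rho_i})=0$. So your torsor argument runs on all of $Z_0$ rather than just $\mathbb T^n$. Since $\hat Z\setminus\hat p^{-1}(Z_0)$ has codimension $\geq 2$, this finishes the comparison of divisors.
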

\begin{proof}
See \cite[Lemma 3.3]{antican}.
\end{proof}

Conversely, given a $\Cl(Z)$-homogeneous $g\in \mathcal R(Z)$ of degree 
$w\in \Cl(Z)$ and a toric divisor $D=\sum_{i=1}^r a_iD_i$ whose class is $w$, the $\Sigma$-{\em dehomogenization of} $g$ {\em with respect to} $D$ is the Laurent polynomial $f\in \cc[T_1^{\pm },\dots, T_n^{\pm}]$ such that 
$p^*f=x^\nu g$, where 
$\nu=-(a_1,\dots, a_r)$. Observe that the Newton polytopes of different dehomogenizations of $g$ differ by a translation by a vector in $\zz^n$.

\begin{notation}\label{not1}
\ \\
\vspace{-0.4cm}

\begin{itemize}[leftmargin=2pt]
\item 
Let $\Delta(D)$ be the polytope associated to $D$:
\[
\Delta(D):=\{m\in M_{\qq}: \langle m,n_i\rangle\geq -a_i,\ \forall i=1,\dots,r\}\subseteq M_{\qq},
\]
which is the Newton polytope of the 
$\Sigma$-dehomogenization with respect to $D$ of a general homogeneous $g\in \mathcal R(Z)$ of degree $w=[\sum_{i=1}^r a_iD_i]$.  Observe that the lattice points of $\Delta(D)$ correspond to the monomials of $\mathcal R(Z)$ of degree $w$.
\item We denote by $\mathcal F(\Delta(D))=|D|$ the complete linear system of $Z$ associated to $D$ (i.e. the family of all effective divisors linearly equivalent to $D$).
\item Given $g\in \mathcal R(Z)$ of degree $w=[D]$,
let $\Delta(D,g)\subseteq \Delta(D)$ be the Newton polytope of the $\Sigma$-dehomogenization of $g$ with respect to $D$.

\item
Given a polytope $\Theta\subseteq \Delta(D)$ we denote by
 $\mathcal F(\Theta)$ the subsystem of $|D|$ associated to $\Theta$, i.e. the set of effective divisors obtained as zero sets of linear combinations of the monomials in $\mathcal R(Z)$ corresponding to the lattice points of $\Theta$.
\end{itemize}
\end{notation}

More generally, given a set $f=(f_1,\dots, f_s)$ of
Laurent polynomials in $\cc[T_1^{\pm },\dots, T_n^{\pm}]$, we define the {\em Newton polytope of} $f=(f_1,\dots, f_s)$
as the Minkowski sum 
\[
\Delta(f):=\Delta(f_1)+\cdots+\Delta(f_s).
\]
The {\em $\Sigma$-homogenization} of $f=(f_1,\dots,f_s)$ is the set $g=(g_1,\dots,g_s)$,
where $g_i$ is the $\Sigma$-homogenization of $f_i$. 
Similarly we define the $\Sigma$-{\em dehomogenization of} a set $g=(g_1,\dots,g_s)$ of homogeneous elements in $\mathcal R(Z)$  {\em with respect to} a collection of toric divisors 
  $D_1,\dots,D_s$  such that $[D_i]=\deg(g_i)$.

 Let $\overline X$ be the zero set of the set of the polynomial system $g=(g_1,\dots,g_s)$ in $\bar Z$ and 
$X=\overline{ V(f_1)}\cap\dots\cap \overline{V(f_s)}\subseteq Z$. We have a commutative diagram

\[
\xymatrix{
\hat X\ar@{^{(}->}[r]\ar[d]_{\hat p_{|\hat X}} & \hat Z\ar[d]^{\hat p}\\
X\ar@{^{(}->}[r]& Z
}
\]
where $\hat X=\overline X\cap \hat Z$ and $\hat p_{|\hat X}:\hat X\to X$ is a good quotient for the action of $H$ on $\hat X$.
In what follows we will say that $X$ is the subvariety of $Z$  \emph{defined by} either $f=(f_1,\dots,f_s)$ or $g=(g_1,\dots,g_s)$.

\subsection{Regularity conditions}
We now introduce different conditions on the singularities of a subvariety $X$ of a toric variety $Z$ defined by a set of Laurent polynomials $f=(f_1,\dots,f_s)$.
Given a face $\Gamma=\Gamma_1+\cdots+ \Gamma_s$ of $\Delta(f)$, where  $\Gamma_i$ is a face of  $\Delta_i:=\Delta(f_i)$, the {\em face system of} $f$ {\em associated to} $\Gamma$ is the set $f_{\Gamma}=(f_{1,\Gamma_1},\dots, f_{s,\Gamma_s})$, where 
$f_{i,\Gamma_i}=\sum_{v\in \Gamma_i\cap \zz^n}c_{i,v}T^v$.

\begin{definition}
A subvariety $X$ of a toric variety $Z$ defined by $f=(f_1,\dots,f_s)$ is
\begin{enumerate}
\item 
 {\em non-degenerate} if for any face $\Gamma$ of $\Delta(f)$, the differential of $f_{\Gamma}$ has rank $s$ at any point of $V(f_{\Gamma})\subseteq \mathbb T^n$;
 \item {\em well-formed} if it contains no singular strata of $Z$ of dimension $\dim(X)-1$;
   \item {\em quasismooth} if $\hat X=\hat p^{-1}(X)$ is smooth.   
 \end{enumerate}
\end{definition}

\begin{remark}
This definition of non-degeneracy is the same as $\Delta$-nondegeneracy in 
    \cite{Khov}. By \cite[Lemma 4]{Khov}
    for fixed Newton polytopes $\Delta_i$, the set of collections of coefficients $c_{i,v}$ such that the set $F$ is non-degenerate is open and non-empty. 
\end{remark}

\begin{definition}\label{qsci}
 An irreducible subvariety $X$ of a toric variety $Z$ defined by $g=(g_1,\dots,g_s)$,
where $g_i\in \mathcal R(Z)$, is a {\em quasismooth complete intersection} if:
 \begin{enumerate}
     \item $X$ is well-formed,
\item $X$ is non-degenerate,
     \item the Jacobian matrix of $g$ has maximal rank at every point of $\hat X$.
 \end{enumerate}
\end{definition}

The last condition in the definition implies that $\hat X$ is a smooth complete intersection in $\hat Z$ by the Jacobian criterion. In particular $X$ is quasismooth and $\dim(X)=\dim (Z)-s$. 

\begin{lemma}
If $X\subseteq Z$ satisfies (iii) in Definition \ref{qsci} and ${\rm codim}(\overline X\backslash \hat X)\geq 2$, then $\overline X$ is irreducible and normal, and $X$ is irreducible. Moreover $X=\overline{V(f_1,\dots,f_s)}$. 
\end{lemma}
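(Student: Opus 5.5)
Since (iii) in Definition \ref{qsci} says the Jacobian of $g=(g_1,\dots,g_s)$ has rank $s$ at every point of $\hat X$, the first step is to read this locally on $\bar Z=\mathrm{Spec}\,\mathcal R(Z)\cong\mathbb A^r$. At each point of $\hat X$ the $s$ equations $g_1,\dots,g_s$ form part of a regular system of parameters of the regular local ring of $\mathbb A^r$. Hence $\hat X$ is smooth of pure dimension $r-s$, and $\mathcal O_{\overline X,x}$ is regular for every $x\in\hat X$.

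For irreducibility and normality of $\overline X$ I would proceed as follows.
\begin{itemize}
\item First I check that $\overline X$ has pure dimension $r-s$. Every irreducible component of the zero set of $s$ equations in $\mathbb A^r$ has dimension at least $r-s$ by Krull's principal ideal theorem. A component of dimension $>r-s$ cannot meet $\hat X$, since $\hat X$ has dimension $r-s$ there. So such a component lies in $\overline X\setminus\hat X$, which has codimension $\ge 2$ by hypothesis; in particular it has dimension $<r-s$, a contradiction.
\item It follows that $\overline X$ is a complete intersection in $\mathbb A^r$, hence Cohen--Macaulay and so $S_2$.
\item It is regular in codimension one, because its singular locus is contained in $\overline X\setminus\hat X$, which has codimension $\ge2$.
\item By Serre's criterion, $\overline X$ is normal.
\end{itemize}

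Irreducibility then uses the grading. The ring $\mathcal R(\overline X)=\mathcal R(Z)/(g)$ is graded, and normality makes it a product of normal domains, one for each connected component. Since the $g_i$ are homogeneous, I would pass to a positive $\mathbb Z$-grading refining the $\Cl(Z)$-grading, or use the torus $H\times\cc^*$-action when $Z$ is complete. Each connected component is then stable under scaling and contains the origin, the unique fixed point of the contracting action. Hence $\overline X$ is connected, and being normal it is irreducible.

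This is the delicate step. In general the $\Cl(Z)$-grading need not be positive, and I expect the main obstacle to be justifying that $0\in\overline X$ and that every component passes through it. I would first try to argue that all $g_i$ have no constant term, since they are non-constant homogeneous polynomials, so $0\in\overline X$ and each component is a cone stable under an appropriate one-parameter subgroup. If positivity fails, I would instead use that the components of $\overline X$ are $H$-stable and argue via the quotient to $X$.

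Once $\overline X$ is irreducible, $\hat X$ is a nonempty open subset of it and is irreducible too, so $X=\hat p(\hat X)$ is irreducible. For the last claim, $\overline X\cap\hat p^{-1}(\mathbb T^n)$ is a nonempty open subset of the irreducible $\overline X$, hence dense. Its image is $V(f_1,\dots,f_s)$, because on the torus each $g_i$ differs from $p^*f_i$ by a unit monomial. Taking closures, and using that $\hat p$ is a good quotient and therefore maps closed $H$-invariant sets to closed sets, gives $X=\overline{V(f_1,\dots,f_s)}$.
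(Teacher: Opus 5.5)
Your route is the paper's. Serre's criterion gives normality: $\overline X$ is a complete intersection, hence Cohen--Macaulay, and it is regular off $\overline X\setminus\hat X$. Irreducibility then comes from connectedness of the cone $\overline X$, which the paper imports from a cited result. Your normality part is more careful than the paper's, since you verify the pure dimension $r-s$ needed to call $\overline X$ a complete intersection. Two steps, however, are not settled.

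\textbf{The delicate step needs completeness of $Z$.} This is where the paper's standing setting ($Z$ projective) must be used, and without it the lemma is false. For $Z=\mathbb{A}^1$ one has $\Cl(Z)=0$ and $\hat Z=\overline Z=\mathbb{A}^1$; then $g=(x_1-1)(x_1-2)$ satisfies all hypotheses while $\overline X$ is two points. So your fallback (``use $H$-stability and the quotient'') cannot work. Nor is ``no constant term, so $0\in\overline X$'' enough. What you need is a one-parameter subgroup of $H$ with all weights positive, and it exists when the fan is complete:
\begin{itemize}
\item Suppose $a\in\mathbb{Q}_{\ge0}^r$ satisfies $\sum_i a_i\alpha_i=0$ in $\Cl(Z)_{\mathbb{Q}}$. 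Then $a=P^T(m)$ for some $m\in M_{\mathbb{Q}}$, i.e.\ $\langle m,n_i\rangle=a_i\ge0$ for all $i$. Since the $n_i$ positively span $N_{\mathbb{Q}}$, this forces $m=0$.
\item By Gordan's alternative there is a homomorphism $\ell:\Cl(Z)\to\mathbb{Z}$ with $\ell(\alpha_i)>0$ for all $i$. It gives $\lambda:\mathbb{C}^*\to H$ acting by $t\cdot x=(t^{\ell(\alpha_1)}x_1,\dots,t^{\ell(\alpha_r)}x_r)$.
\item Every connected component of $\overline X$ is closed and $\lambda$-stable, hence contains $\lim_{t\to0}\lambda(t)\cdot x=0$. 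So $\overline X$ is connected, and connected plus normal gives irreducible.
\end{itemize}

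\textbf{Non-emptiness on the torus is unjustified.} In the last step you assert that $\overline X\cap(\mathbb{C}^*)^r$ is non-empty, and under the stated hypotheses it can be empty. Take $Z=\mathbb{P}^3$, $g_1=x_0+x_1$, $g_2=x_0-x_1$, the homogenizations of $f_1=T_0+T_1$ and $f_2=T_0-T_1$.
\begin{itemize}
\item The Jacobian has rank $2$ everywhere.
\item $\overline X=\{x_0=x_1=0\}$ is a plane, and $\overline X\setminus\hat X=\{0\}$ has codimension $2$ in it.
\item Yet $V(f_1,f_2)=\emptyset$, while $X$ is a line.
\end{itemize}
Once $\overline X$ is integral, $(g_1,\dots,g_s)$ is prime. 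So the missing hypothesis is exactly that no $x_i$ lies in $(g_1,\dots,g_s)$, equivalently $X\cap\mathbb{T}^n\neq\emptyset$; granting it, your density argument is correct. The paper's one-line ``they coincide if $X$ is irreducible'' tacitly makes the same assumption. You should state it as an extra hypothesis rather than treat it as automatic.
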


\begin{proof}
By condition (iii) and the hypothesis on the codimension of $\hat X$ the subvariety $\bar X$ satisfies Serre's criterion for normality (observe that $\bar X$ is Cohen-Macaulay since it is a local complete intersection). Thus $\bar X$ is normal.
This implies that $\bar X$ is irreducible (see the proof of \cite[Theorem 3.9 (i),(iv)]{antican}) and thus $X$ is irreducible. 
Observe that in general $\overline{V(f)}\subseteq X$, but they coincide if $X$ is irreducible.
\end{proof}

\begin{example}
  If $Z=\pp^3$, then one can take 
  \[
  p:\mathbb T^4\to \mathbb T^3,\ (x_1,x_2,x_3,x_4)\mapsto \left(\frac{x_1}{x_4},\frac{x_2}{x_4}, \frac{x_3}{x_4}\right)
  \]
 and the characteristic morphism 
  is the natural quotient map $p:\cc^4\backslash\{0\}\to \pp^3$.
  Let $f_1=T_2^2-T_1T_3, f_2=T_1-T_2T_3, f_3=T_3^2-T_2 \in \cc[T_1^{\pm}, T_2^{\pm}, T_3^{\pm}]$. The homogeneizations of the $f_i$'s in $\mathcal R(Z)$ are   $g_1=x_2^2-x_1x_3, g_2=x_1x_4-x_2x_3, g_3=x_3^2-x_2x_4$. The subvariety of $Z$ defined by $g_1,g_2,g_3$ is the rational normal curve $X$ of degree three. Observe that $X$ is well-formed and quasismooth. An easy computation shows that it is non-degenerate. However $X$ is not a quasismooth complete intersection since property (iii) of Definition \ref{qsci} fails.
  On the other hand the subvariety defined by $f_1,f_2$ is not quasismooth and is the union of $X=\overline{V(f_1,f_2)}$ and the line $x_1=x_2=0$.   
\end{example}

\begin{remark}
There are examples of complete intersections $X$ in weighted projective spaces which are quasismooth  but are not well-formed. 
On the other hand, a complete intersection of dimension at least $3$  in a well-formed weighted projective space which is quasismooth is either well-formed or the intersection of a linear cone with other hypersurfaces (see \cite[Note 6.18 (ii)]{F} or \cite[Theorem 2.17]{Pri}). 
\end{remark}

\subsection{Quasismoothness}\label{2.4}
Since it will be an important property through the paper, we dedicate one section to describe quasismoothness for complete intersections.

Let $Z=Z_{\Sigma}$ be a normal projective toric variety and $\mathcal R(Z)=\cc[x_1,\dots,x_r]$ be its homogeneous coordinate ring, where $\deg(x_i)=\alpha_i\in \Cl(Z)$.
We will give a combinatorial characterization of quasismoothness for
codimension $s\leq r$ complete intersections $X$ in $Z$ 
defined by $g=(g_1,\ldots,g_s)$, where $g_i\in \mathcal R(Z)$ and
 $\deg(g_i)=d_i\in \Cl(Z)$.

Let $\mathcal V(g_i)$ be the vector subspace of $\mathcal R(Z)$ generated by the monomials of $g_i$ and $\mathcal L(g_i)$ be the associated linear system in $\hat Z$, i.e. the set of effective divisors defined as zero loci of polynomials in $\mathcal V(g_i)$. In what follows we will assume the $g_i$'s to be {\em general} elements of $\mathcal V(g_i)$.

\begin{lemma}\label{bertini}
If $g_1,\ldots,g_s\in \mathcal R(Z)$ are general in $\mathcal V(g_1),\dots, \mathcal V(g_s)$, 
then the singular locus of $\hat X$ is contained in the union 
of the base loci of $\mathcal L(g_i), i=1,\ldots,s$.
\end{lemma}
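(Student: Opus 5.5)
This is a Bertini-type statement, and I would prove it by induction on $s$ using the classical Bertini theorem for linear systems on the smooth quasi-affine variety $\hat Z$. Set $B_i$ for the base locus of $\mathcal L(g_i)$ in $\hat Z$ and $B=B_1\cup\dots\cup B_s$. It suffices to show that for general choices, the intersection $\hat X\setminus B$ is smooth of the expected codimension $s$ (or empty). Note first that $\hat Z$ itself is smooth: it is an open subset of affine space $\bar Z\cong\cc^r$. Hence $U:=\hat Z\setminus B$ is a smooth variety on which every $\mathcal L(g_i)$ is base point free.

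The argument rests on the standard Bertini theorem in characteristic zero. If $Y$ is a smooth (possibly non-complete) variety and $V$ is a finite-dimensional space of regular functions on $Y$ defining a linear system without base points on $Y$, then for general $h\in V$ the zero locus $\{h=0\}\cap Y$ is smooth of codimension one (or empty). One way to see this is through the morphism $Y\to\pp(V^*)$ and Kleiman–Bertini/generic smoothness: the general hyperplane section pulls back to a smooth divisor. Equivalently, the incidence variety $\{(y,h): h(y)=0\}\subseteq Y\times V$ is a vector bundle over $Y$ of corank one, since base-point-freeness makes evaluation surjective. It is therefore smooth, and generic smoothness applied to its projection to $V$ shows that the general fibre is smooth. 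Applying this with the Jacobian formulation gives $dh(y)\neq 0$ at every zero $y$, so the zero locus is smooth of codimension one.

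Now I run the induction. Put $Y_0=U$ and $Y_k=Y_{k-1}\cap\{g_k=0\}$. Suppose $Y_{k-1}$ is smooth of pure codimension $k-1$ in $U$, cut out with independent differentials of $g_1,\dots,g_{k-1}$. Restricting $\mathcal V(g_k)$ to $Y_{k-1}$ gives a linear system that is still base point free, because its base locus is contained in $B_k\cap U=\emptyset$. By Bertini, for general $g_k$ the zero locus $Y_k$ is smooth of codimension one in $Y_{k-1}$. The incidence-variety version gives more: $dg_k|_{Y_{k-1}}\neq0$ along $Y_k$, so $dg_1,\dots,dg_k$ are linearly independent there. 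Generality of the tuple $(g_1,\dots,g_s)$ is a finite intersection of dense open conditions, applied successively: the open set for $g_k$ depends on $g_1,\dots,g_{k-1}$, but the set of good tuples is still a nonempty Zariski open subset of $\prod\mathcal V(g_i)$ by the incidence-variety argument done over the product. At the end, the Jacobian of $g$ has rank $s$ at every point of $\hat X\setminus B$, so $\mathrm{Sing}(\hat X)\subseteq B$.

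The main subtlety is making the notion of ``general'' simultaneous and working over the non-complete, non-projective $\hat Z$. I would handle this by doing the generic smoothness argument once, on the incidence variety
\[
\{(y,h_1,\dots,h_s)\in U\times\textstyle\prod_i\mathcal V(g_i): h_i(y)=0\ \forall i\}.
\]
Base-point-freeness on $U$ makes this incidence variety a vector bundle of corank $s$ over $U$, so it is smooth. Generic smoothness in characteristic zero of its projection to $\prod_i\mathcal V(g_i)$ then yields the whole statement directly.
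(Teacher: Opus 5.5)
Your argument is correct and is essentially the paper's proof, which just invokes Bertini's theorem for the restriction of each $\mathcal L(g_i)$ to general members of the other systems, i.e.\ on the locus of $\hat Z$ where all the systems are base point free. Your incidence-variety and generic-smoothness formulation makes rigorous the simultaneous genericity and the non-complete ambient $\hat Z$, which the paper leaves implicit; one small correction is that the set of good tuples need not be open, only contain a dense open subset, but that is all you need.
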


\begin{proof}
It follows from Bertini's Theorem applied to the restriction of the linear system $\mathcal L(g_i)$ to a general 
hypersurface in $\mathcal L(g_j)$ for all $i\neq j$. 
\end{proof}
Let $E_i$ be the divisor defined by $g_i$ in $Z$ and consider the projective bundle 
\[
 Z(E):=\pp(\Osh_Z(E_1)\oplus\cdots\oplus \Osh_Z(E_s))
\] 
over $Z$.
The homogeneous coordinate ring of $Z(E)$ is the polynomial ring in the variables  $y_1,\dots, y_{r+s}= x_1,\dots,  x_r,t_1,\dots,t_s$ 
with degrees given by the columns of the following matrix in $\Cl(Z(E))\cong \Cl(Z)\oplus \zz$:
\[
\left(
\begin{matrix}
\alpha_1 & \alpha_2 & \cdots & \alpha_r & d-d_1 & \cdots & d-d_s\\
0 & 0 & 0 & 0 & 1 & \cdots & 1 
\end{matrix}
\right),
\]
where $d=\sum_id_i$.
Its irrelevant ideal is generated by $\psi(I)$ and $(t_1,\dots, t_s)$, where $I$ is the irrelevant ideal of $Z$ and
$\psi:\mathcal R(Z)\to \mathcal R(Z(E)),\ x_i\mapsto x_i$, see
\cite{mavlyutov}.
The polynomial 
\begin{equation}\label{pot}
F=t_1g_1+\cdots +t_sg_s
\end{equation}
is $\Cl(Z(E))$-homogeneous 
and thus defines a hypersurface $Y$ in $Z(E)$ of degree $(d,1)$.

\begin{proposition}\label{prop-qs}
A well-formed and non-degenerate subvariety $X$ of $Z$ defined by $g_1,\dots,g_s\in \mathcal R(Z)$ is a quasismooth complete intersection if and only if the hypersurface $Y$ defined by $F$ (\ref{pot}) in $Z(E)$ is quasismooth.
\end{proposition}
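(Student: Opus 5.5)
Since $X$ is assumed well-formed and non-degenerate, conditions (i) and (ii) of Definition \ref{qsci} hold, so the statement reduces to this: the Jacobian matrix of $g=(g_1,\dots,g_s)$ has rank $s$ at every point of $\hat X$ if and only if $\hat Y=\hat p_E^{-1}(Y)$ is smooth. Here $\hat p_E:\widehat{Z(E)}\to Z(E)$ is the characteristic morphism. Since $Y$ is a hypersurface cut out by one homogeneous polynomial, quasismoothness of $Y$ means that $\nabla F$ has no common zero with $F$ on $\widehat{Z(E)}$. By the Euler relation for the $\zz$-component of the grading ($F$ has degree $1$ in the $t$'s), $F=\sum_j t_j\,\partial F/\partial t_j$. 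So it suffices to study the common zeros of the partial derivatives
\[
\frac{\partial F}{\partial t_j}=g_j,\qquad \frac{\partial F}{\partial x_i}=\sum_{j=1}^s t_j\frac{\partial g_j}{\partial x_i},
\]
on $\widehat{Z(E)}=\{(x,t): x\in \hat Z,\ t\neq 0\}$. This description uses the irrelevant ideal, generated by $\psi(I)$ and $(t_1,\dots,t_s)$.

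The singular points of $\hat Y$ are therefore the pairs $(x,t)$ with $x\in\hat Z$, $t\in\cc^s\setminus\{0\}$, $g_1(x)=\dots=g_s(x)=0$, and $t^T J_g(x)=0$, where $J_g$ is the $s\times r$ Jacobian matrix of $g$. The first conditions say exactly that $x\in\hat X$. Then a nonzero $t$ with $t^TJ_g(x)=0$ exists if and only if the rows of $J_g(x)$ are linearly dependent, i.e. ${\rm rank}\,J_g(x)<s$. Hence $\hat Y$ is singular if and only if there is some $x\in\hat X$ where the Jacobian of $g$ does not have maximal rank. This gives both directions at once.

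The main care point is the identification of $\widehat{Z(E)}$ with $\hat Z\times(\cc^s\setminus\{0\})$, and the fact that quasismoothness of the hypersurface $Y$ is equivalent to the Jacobian criterion for $F$ on $\widehat{Z(E)}$. For the first, I would invoke the description of the irrelevant ideal of $Z(E)$ recalled above from \cite{mavlyutov}. For the second, the relevant facts are that $\hat Y$ is a hypersurface in the smooth variety $\widehat{Z(E)}\subseteq\cc^{r+s}$, and that $F$ is reduced (it is linear in each $t_j$ with the $g_j$ nonzero). Then $\hat Y$ is smooth exactly where $dF\neq0$.

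One further point needs a short argument. Condition (iii) of Definition \ref{qsci} refers to $\hat X=\overline X\cap\hat Z$ with its reduced structure, while the Jacobian criterion is applied scheme-theoretically. Since the equivalence above is pointwise on rank, no extra argument is needed there. I would also remark that irreducibility of $X$, which is part of the definition, follows from the previous lemma once codimension conditions hold; alternatively, it can be taken as part of the hypotheses on $X$.
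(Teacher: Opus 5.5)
Your argument is correct and is the ``direct computation'' the paper invokes, together with the fact that $t_1=\cdots=t_s=0$ does not meet $\widehat{Z(E)}=\hat Z\times(\cc^s\setminus\{0\})$: a common zero of $\partial F/\partial t_j=g_j$ and $\partial F/\partial x_i=\sum_j t_j\,\partial g_j/\partial x_i$ with $t\neq 0$ is exactly a point $x\in\hat X$ where ${\rm rank}\,J_g(x)<s$. The one loose end is your parenthetical reason that $F$ is reduced, since linearity in the $t_j$ does not exclude a repeated factor in the $x$-variables common to all $g_j$; but you do not need reducedness if you read quasismoothness of the hypersurface $Y$, as is standard, as the condition $dF\neq 0$ along $V(F)\cap\widehat{Z(E)}$.
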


\begin{proof}
It is a direct computation using the fact that $t_1=\cdots=t_s=0$ is 
empty in $Y$. See also \cite[Lemma 2.2]{mavlyutov}.
\end{proof}

By the definition of $Z(E)$ there is a commutative diagram 
\[
\xymatrix{
\displaystyle\widehat{Z(E)}\ar[r]^{\hat{p}}\ar[d]_{\hat\pi } &  Z(E)\ar[d]^{\pi}&\supseteq Y\\
\hat Z\ar[r]^{\hat p}& Z&\supseteq X,
}
\]
where  $\pi:Z(E)\rightarrow Z$ is the morphism defining the projective bundle,  $\hat \pi$ is the projection on the first $r$ variables and   $\hat p$ denotes the characteristic map for both $Z$ and $Z(E)$.

\begin{remark}
Observe that by Proposition \ref{prop-qs} and \cite[Corollary 3.11]{ACGqs}, quasismoothness of $Y\subset Z(E)$ only depends on the Newton polytope of $F$, and thus of the polynomials $g_i$, $i=1,\ldots, s$.
\end{remark}

Proposition \ref{prop-qs} and  \cite[Theorem 3.6]{ACGqs} provide a combinatorial characterization of quasismoothness for complete intersections. We study in detail the case $s=2$, generalizing Fletcher's results in \cite{F}.
Let $B$ be the base locus of the linear system $\mathcal L(F)$ in $\widehat{Z(E)}$, i.e. the zero set of all monomials in $F$. Similarly, let $B_i$ be the base locus of the linear system generated by the monomials of $g_i$ in $\hat{Z}$, $i=1,2$.
Observe that 
\[
B=(\hat B_1\cap \{t_2=0\})\cup (\hat B_2\cap \{t_1=0\})\cup (\hat B_1\cap \hat B_2),
\]
where $\hat B_i=\hat \pi^{-1}(B_i)$.
In what follows, given a subset $I\subseteq \{1,\dots,r+s\}$, we define
\[
\hat D_I=\{y_\rho=0: \rho \in I\}\subseteq \widehat {Z(E)}
\]
and in case it is not empty, we call it  a {\em toric stratum} in $\widehat{Z(E)}$.
Moreover, we denote by $D_I$ its image in $Z(E)$ by $\hat p$.
We also recall that a set of polytopes in $\mathbb Q^n$ is {\em dependent} if it contains a collection of $l > 0$ non-empty polytopes which can be all translated in an $(l - 1)$-dimensional subspace (see \cite[Definition 3.1]{ACGqs}.

 \begin{proposition}\label{prop1}
The hypersurface $Y\subseteq Z(E)$ is quasismooth  if and only if for any toric stratum  
$\hat D_I \subseteq B$ one of the following holds:
\begin{enumerate}
\item $\hat D_I\subset \hat B_1\backslash \hat B_2$  
and the set of polytopes in \eqref{polytopes-case1} is dependent,

\item $\hat D_I\subset \hat B_2\backslash \hat B_1$
and the set of polytopes in \eqref{polytopes-case2} is dependent,

\item $\hat D_I\subset \hat B_1\cap \hat B_2$ 
and the set of polytopes 
in \eqref{polytopes-case3} is dependent.
\end{enumerate}
\end{proposition}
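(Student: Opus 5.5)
The plan is to apply the combinatorial criterion of \cite[Theorem 3.6]{ACGqs} to the hypersurface $Y\subseteq Z(E)$ defined by $F=t_1g_1+t_2g_2$. That theorem says a hypersurface defined by a general polynomial with fixed Newton polytope is quasismooth if and only if, for every toric stratum $\hat D_I$ contained in the base locus $B$ of $\mathcal L(F)$, a certain set of polytopes is dependent. This set consists of the Newton polytopes of the coefficients $\partial F/\partial y_\rho$, for $\rho\in I$, restricted to $\hat D_I$; equivalently, these are the polytopes built from the monomials of $F$ that are linear in the variables $y_\rho$, $\rho\in I$, and do not involve other variables of $I$. So the proof consists of specializing this criterion to $F$ and identifying these polytopes explicitly in each case.

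\textbf{Step 1 (locating strata).} Use the decomposition $B=(\hat B_1\cap\{t_2=0\})\cup(\hat B_2\cap\{t_1=0\})\cup(\hat B_1\cap\hat B_2)$. Since $t_1=t_2=0$ lies in the irrelevant locus of $Z(E)$, a stratum $\hat D_I\subseteq B$ cannot contain both $t$-indices in $I$. The strata therefore split according to the following trichotomy:
\begin{enumerate}
\item $\hat D_I\subseteq \hat B_1\setminus\hat B_2$, which forces $t_2\in I$ (that is, $r+2\in I$) and $\hat\pi(\hat D_I)\subseteq B_1$;
\item the symmetric case with the roles of $1$ and $2$ exchanged;
\item $\hat D_I\subseteq \hat B_1\cap\hat B_2$, where neither $t_i$ need vanish.
\end{enumerate}
Since strata are irreducible, each stratum in $B$ lies in one of these three pieces.

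\textbf{Step 2 (computing polytopes).} For $\rho\in I$, write the relevant part of $\partial F/\partial y_\rho$ on $\hat D_I$.
\begin{itemize}
\item In case (i), $\rho=t_2$ contributes the monomials of $g_2$ not vanishing on $\hat D_I$. An $x$-variable $x_\rho$ with $\rho\in I$ contributes $t_1\cdot(\text{monomials of } g_1 \text{ linear in } x_\rho)$; monomials of $g_2$ do not contribute, because of the factor $t_2=0$.
\item Case (ii) is symmetric.
\item In case (iii), $x_\rho$ contributes $t_1\cdot(\partial g_1/\partial x_\rho)|+t_2\cdot(\partial g_2/\partial x_\rho)|$. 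Its Newton polytope is a Cayley-type polytope, the convex hull of the two pieces placed at heights given by $t_1$ and $t_2$.
\end{itemize}
These are the sets \eqref{polytopes-case1}--\eqref{polytopes-case3}. After dehomogenizing with respect to the torus of $\hat D_I$, which is a quotient of the big torus by the coordinates in $I$, the dependence condition of \cite{ACGqs} reads exactly as stated.

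\textbf{Step 3 (conclusion).} Combine Steps 1 and 2 with \cite[Theorem 3.6]{ACGqs}. The genericity assumption on $g_1,g_2$ translates into genericity of $F$ in $\mathcal V(F)$, so that theorem applies verbatim.

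The main obstacle I expect is Step 2 in case (iii). There one must check that strata in $\hat B_1\cap\hat B_2$ with some $t_i\in I$ are handled consistently; such a stratum also lies in $\hat B_1$ or $\hat B_2$ alone in a way that overlaps with case (i) or (ii). One must also verify that the Cayley-polytope description matches dependence in the sense of \cite[Definition 3.1]{ACGqs} after the identification of lattices. I would first treat strata with $I\cap\{r+1,r+2\}=\emptyset$, and then reduce the remaining ones to cases (i) and (ii).
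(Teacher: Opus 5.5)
Your proposal is correct and is essentially the paper's proof: you apply \cite[Theorem 3.6]{ACGqs} to $F=t_1g_1+t_2g_2$, observe that no stratum in $B$ can have both $t_1$ and $t_2$ vanishing, and read off the polytopes $\Delta_I(F^\rho)$ in the three cases exactly as in \eqref{polytopes-case1}--\eqref{polytopes-case3}. The worry in your last paragraph is unnecessary, since the three cases are mutually exclusive and a stratum in $\hat B_1\cap\hat B_2$ with, say, $t_1\in I$ stays in case (iii): there the polytope for $\rho=r+1$ is empty and the $t_1$-term of $g_1^\rho t_1+g_2^\rho t_2$ vanishes on restriction, so \eqref{polytopes-case3} already covers it without any reduction to cases (i) or (ii).
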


\begin{proof}
If $\hat D_I\subseteq B$, then one can write $F=\sum_{\rho\in I} y_\rho F^\rho$ and define $\Delta_I(F^\rho)$ as the Newton polytope of the restriction of $F^{\rho}$ to $D_I$.
By \cite[Theorem 3.6]{ACGqs} the hypersurface $Y$ is quasismooth  if and only if the set of non-empty polytopes  
$$\{\Delta_I(F^\rho): \rho\in I\}$$
is dependent
for any $\hat D_I \subseteq B$.
We now distinguish three cases. 

If $\hat D_I\subset (\hat B_1\backslash \hat B_2)\cap\{t_2=0\}$ one can write 
\[
F=t_1g_1+t_2g_2=\sum_{\rho\in J}x_\rho (g_1^\rho t_1)+t_2\left(g'_2+\sum_{\rho\in J} x_\rho g_2^\rho\right),
\]
where $J=I\cap \{1,\dots,r\}$ and $g'_2$ is a non-constant polynomial in the variables $x_\tau$, $\tau\not\in J$.
Thus the set of polytopes considered in \cite[Theorem 3.6]{ACGqs} is
\begin{equation}\label{polytopes-case1}
\{\Delta_{I}(F^\rho): \rho\in I\}=\{\Delta_I(g_1^{\rho}t_1): \rho\in  J\}\cup \{\Delta_I(g'_2)\}
\end{equation}
i.e. the Newton polytope of $g_2'$ and of the restrictions to $\hat D_I$ of $g_1^\rho t_1$.

Analogously, if $\hat D_I\subset (\hat B_2\backslash \hat B_1)\cap\{t_1=0\}$ the polytopes are, up to translation, the Newton polytopes of the restrictions to $\hat D_{I}$ of $g_1$ and of the polynomials $g_2^\rho$:
\begin{equation}\label{polytopes-case2}\{\Delta_{I}(F^\rho): \rho\in I\}=\{\Delta_I(g_2^{\rho}t_2): \rho\in J\}\cup \{\Delta_I(g'_1)\}.
\end{equation}

If $\hat D_I\subset \hat B_1\cap \hat B_2$ one can write 
\[
F=t_1g_1+t_2g_2=\sum_{\rho\in J}x_\rho (g_1^\rho t_1 +g_2^\rho t_2),\quad g_1^\rho,g_2^\rho\in \mathcal R(Z),
\]
thus the collection of polytopes is
\begin{equation}\label{polytopes-case3}
\{\Delta_{I}(F^\rho): \rho\in I\}=
\{\Delta_I(g_1^{\rho}t_1+g_2^\rho t_2): \rho\in J\}
\end{equation}
i.e. the Newton polytopes of the restrictions to $\hat D_I$ of $g_1^\rho t_1+g_2^\rho t_2$.
\end{proof}

\begin{example}
Let $X$ be the subvariety of $\pp(1,1,1,2)$ defined by $g_1=x_1x_2+x_3^2+x_4$ and $g_2=x_1^3+x_3^3+x_2x_4+x_2^2x_3+x_3x_4$. 
We have 
\(
B=\hat{D}_{1}\cup \hat{D}_2\cup \hat{D}_3,
\) where
\[
\hat D_1=\{x_1=x_3=x_4=0\}\subset \hat B_1\cap \hat B_2,
\]
\[
\hat D_2=\{t_1=x_1=x_2=x_3=0\}\subset (\hat B_2\setminus \hat B_1)\cap \{t_1=0\},
\]
\[
\hat D_3=\{t_2=x_2=x_3=x_4=0\}\subset (\hat B_1\setminus \hat B_2)\cap \{t_2=0\}.
\]
First consider \(\hat D_1\), which corresponds to Case (iii) of Proposition \ref{prop1}. One can write
\[
F
= x_1(t_1x_2+t_2x_1^{2})
+ x_3(t_1x_3+t_2x_4+t_2x_2^{2}+t_2x_3^{2})
+ x_4(t_1+t_2x_2),
\]
and when restricting to \(\hat D_1\) we are left with the following three polytopes in \(\mathbb{R}^3\)
\[
\Delta(t_1x_2),\ \Delta(t_2x_2^{2}),\ \Delta(t_2x_2),
\]
 each reduced to a single point. They can be translated in a
\(0\)-dimensional space, thus they form a dependent collection.
Now consider \(\hat D_2\), which corresponds to Case (ii) of Proposition \ref{prop1}. One can write
\[
F
= t_1(x_1x_2+x_3^{2}+x_4)
+ x_1(t_2x_1^{2})
+ x_2(t_2x_2x_3+t_2x_4)
+ x_3(t_2x_4+t_2x_3^{2}),
\]
and when restricting to \(\hat D_2\) we are left with
\[
\Delta(x_4),\ \varnothing,\ \Delta(t_2x_4),\ \Delta(t_2x_4).
\]
Here we have 3 polytopes in \(\mathbb{R}^2\), all reduced to a point, hence again a dependent collection. Finally consider \(\hat D_3\) which corresponds to Case (i) of Proposition \ref{prop1}. One can write
\[
F
= x_2(t_1x_1)+x_3(t_1x_3)+x_4(t_1)
+ t_2(x_3x_4+x_1^{3}+x_2^{2}x_3+x_3^{3}+x_2x_4),
\]
and when restricting to \(\hat D_3\) we are left with
\[
\Delta(t_1x_1),\ \varnothing,\ \Delta(t_1),\ \Delta(x_1^{3}),
\]
that is again 3 points in \(\mathbb{R}^2\) forming a dependent collection.
Since all the toric strata satisfy the conditions of  Proposition \ref{prop1}, we can conclude that $X$ is quasismooth.
\end{example}

Given a toric stratum $\hat D_J\subseteq B_1$, we will denote by $k_J(g_1)$ 
the number of $\rho\in J$ such that $\Delta_{J}(g_1^{\rho})$ is not empty 
(that is the restriction of $g_1^{\rho}$ to $\hat D_{J}$ is not zero).
Similarly we define $k_{J}(g_2)$ when $\hat D_{J}\subseteq \hat B_2$.
Finally, when $\hat D_J\subseteq B_1\cap B_2$ we define $k_{J}(g_1,g_2)$ as the number of 
$\rho\in J$ such that either $\Delta_{J}(g_1^{\rho})$ or $\Delta_{J}(g_2^{\rho})$ is not empty.
Observe that $k_{J}(g_1,g_2)\geq  k_{J}(g_i),i=1,2$.

\begin{remark}\label{rem-dim}
  Let $\hat D_I$ be a toric stratum contained in $B\subseteq \widehat{Z(E)}$. Then 
the dimension of its image $D_{I}$
in $Z(E)$ is:

$$\dim (D_I)=
\begin{cases}
    \dim (\pi(D_I))& \mbox{if } \hat D_I\subseteq \{t_i=0\}, i=1 \mbox{ or } 2,\\
    \dim (\pi(D_I))+1& \mbox{otherwise.}
\end{cases}$$
\end{remark}

\begin{theorem}\label{cor-qs}
A well-formed subvariety $X\subseteq Z$ defined by general $g_1,g_2\in \mathcal R(Z)$ is a quasismooth complete intersection if for any toric stratum $\hat D_J\subseteq B_1\cup B_2$, with $\dim  (D_J)=d_{J}$, one of the following holds 
\begin{enumerate}
\item $\hat D_J\subseteq B_1\backslash B_2$  and $k_{J}(g_1)\geq d_J$,
\item $\hat D_I\subseteq B_2\backslash B_1 $ and $k_{J}(g_2)\geq d_J$,
\item $\hat D_J\subseteq B_1\cap B_2$, $k_{J}(g_i) \geq d_{J}+1$ for $i=1,2$
and $k_{J}(g_1,g_2)\geq d_{J}+2$.
\end{enumerate}
If $Z$ is a fake weighted projective space, then $\dim(D_J)=\dim(\hat D_J)-1$ 
and the previous condition is also necessary.
\end{theorem}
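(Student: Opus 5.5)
We reduce the statement to Proposition \ref{prop1} via Proposition \ref{prop-qs}. Non-degeneracy of $X$ holds for general coefficients by Khovanskii's lemma, and $X$ is well-formed by hypothesis. So $X$ is a quasismooth complete intersection if and only if $Y\subseteq Z(E)$ is quasismooth. It therefore suffices to check, for every toric stratum $\hat D_I\subseteq B$, that the relevant collection \eqref{polytopes-case1}, \eqref{polytopes-case2} or \eqref{polytopes-case3} is dependent. The key tool is a counting criterion: a collection of $k$ non-empty polytopes lying in a space of dimension at most $k-1$ is automatically dependent, since all of them can be translated into the ambient space of dimension $\le k-1$. The polytopes $\Delta_I(\cdot)$ live in the character lattice of the torus of the stratum $D_I\subseteq Z(E)$, whose dimension is $\dim D_I$. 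Hence it is enough to show that the number of non-empty polytopes is at least $\dim(D_I)+1$.

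We then run through the three cases, writing $I=J\cup T$ with $J\subseteq\{1,\dots,r\}$ and $T\subseteq\{t_1,t_2\}$, so that $\pi(D_I)=D_J$.

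\emph{Case (i)}: $\hat D_I\subseteq(\hat B_1\setminus\hat B_2)\cap\{t_2=0\}$. By Remark \ref{rem-dim}, $\dim D_I=d_J$. The collection consists of the $k_J(g_1)$ non-empty polytopes $\Delta_I(g_1^\rho t_1)$ together with $\Delta_I(g_2')$, which is non-empty because $\hat D_J\not\subseteq B_2$. This gives $k_J(g_1)+1\ge d_J+1$ polytopes, as needed.

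\emph{Case (ii)} is symmetric.

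\emph{Case (iii)}: $\hat D_J\subseteq B_1\cap B_2$, and $I$ may or may not contain a $t_i$.
\begin{itemize}
\item If $t_1,t_2\notin I$, then $\dim D_I=d_J+1$ and the number of non-empty polytopes is $k_J(g_1,g_2)\ge d_J+2$.
\item If $t_2\in I$, then $F^\rho$ restricts to $g_1^\rho t_1$ for $\rho\in J$, while $F^{t_2}=g_2$ restricts to zero because $\hat D_J\subseteq B_2$. We get $k_J(g_1)\ge d_J+1$ non-empty polytopes in a space of dimension $d_J$.
\item The case $t_1\in I$ is symmetric.
\end{itemize}
Strata with $\hat D_J\subseteq B_1\setminus B_2$ but $t_2\notin I$ are not contained in $B$, so they need not be checked. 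This proves sufficiency.

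For the converse when $Z$ is a fake weighted projective space, we use that every cone is simplicial with a single torus-fixed structure. The restricted polynomials $g_i^\rho|_{\hat D_J}$ are then honest polynomials on $\hat D_J$, which is a coordinate subspace of dimension $d_J+1$, modulo a one-dimensional quotient. By the paper's combinatorial setup, the monomials contributing to distinct $\rho$ are general. I would argue that for general coefficients the Newton polytopes of the restrictions are in general position: each one is a simplex spanned by the coordinate directions of $\hat D_J$, and a minimal dependent subcollection must have cardinality exceeding the dimension of the span. The dimensions then force the counting inequalities, and the case $I$ with $t_1,t_2\notin I$ gives the stronger bound $k_J(g_1,g_2)\ge d_J+2$.

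The main obstacle is this necessity direction. Dependence of polytopes is not equivalent to a mere count in general, since a few degenerate polytopes, such as points, can be dependent cheaply. I would first try to show that in the weighted projective setting, if dependence holds, then one can enlarge by adding strata, i.e.\ passing to sub-strata $\hat D_{J'}\supseteq$ coordinates where polytopes become points. There, dependence of $l$ points in dimension $l-1$ forces the count. This reproduces Fletcher's argument for the case $s=2$.
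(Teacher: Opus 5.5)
Your sufficiency argument is correct and essentially the paper's. You reduce to $Y$ via Proposition~\ref{prop-qs}, apply Proposition~\ref{prop1}, and count at least $\dim D_I+1$ non-empty polytopes using Remark~\ref{rem-dim}, with the same subcases of (iii).

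\textbf{The gap is in the necessity direction, and the route you sketch cannot close it.}

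\begin{enumerate}
\item Newton polytopes do not depend on the (general, non-zero) coefficients, so ``general position for general coefficients'' has no content.
\item The restricted polytopes are not simplices spanned by coordinate directions. They are arbitrary lattice polytopes in a degree slice.
\item Dependence is a very weak property: a collection containing a single point is already dependent (take $l=1$). So on a stratum where your inequality fails, the collection can still be dependent, e.g.\ if some $g_1^\rho|_{\hat D_J}$ is a monomial. Your claim that dependence of points ``forces the count'' is therefore false.
\item Passing to smaller strata changes both $k$ and $d$, with no control over whether the inequality still fails there.
\end{enumerate}
Necessity cannot be extracted from the dependence criterion stratum by stratum in this way.

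\textbf{The missing idea is to argue directly on the singular locus, as the paper does.}

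\emph{The singular locus along a base stratum.} If $\hat D_I\subseteq B$, every monomial of $F$ contains some $y_\rho$ with $\rho\in I$. Hence on $\hat D_I$ the partials of $F$ in the variables outside $I$ vanish, and $\partial F/\partial y_\rho$ restricts to $F^\rho|_{\hat D_I}$. So the singular locus of $\hat Y$ along $\hat D_I$ is the common zero set of the non-zero restrictions $F^\rho|_{\hat D_I}$.

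\emph{Case (i).} Suppose $k_J(g_1)<d_J=\dim\hat D_J-1$, and take $\hat D_I=\hat\pi^{-1}(\hat D_J)\cap\{t_2=0\}$. The relevant restrictions are $g_2|_{\hat D_J}$ and the $g_1^\rho|_{\hat D_J}$. These are at most $k_J(g_1)+1$ quasi-homogeneous equations in the $\dim\hat D_J$ free coordinates $x_\tau$, $\tau\notin J$. Their common zero set is therefore a cone of dimension at least $\dim\hat D_J-k_J(g_1)-1>0$.

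\emph{Where the fake weighted projective hypothesis enters.} You used this hypothesis only for the dimension formula, but it is needed here. In this case $\hat Z=\cc^{n+1}\setminus\{0\}$, so a positive-dimensional cone contains points outside the irrelevant locus. These are singular points of $\hat Y$.

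\emph{Cases (ii) and (iii).} Case (ii) is symmetric. In case (iii) there are two possibilities:
\begin{itemize}
\item $k_J(g_i)<\dim\hat D_J$ for some $i$. Then the hypersurface criterion \cite[Corollary 5.3]{ACGqs} applies.
\item Otherwise, failure forces $k_J(g_1)=k_J(g_2)=k_J(g_1,g_2)=\dim\hat D_J$, so the same indices $\rho$ contribute to both $g_i$. On $\hat\pi^{-1}(\hat D_J)$, where $t_1,t_2$ are free, there are only $\dim\hat D_J$ equations $t_1g_1^\rho+t_2g_2^\rho$. These are too few to confine the singular locus to the irrelevant locus.
\end{itemize}
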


\begin{proof}
Given any non-zero homogeneous element $h\in \mathcal R(Z(E))$, 
the polytope $\Delta_{I}(h)$  
of its restriction to $\hat D_{I}$
is contained in an affine space of dimension $\dim (D_{I})$.
Moreover, it is clear that a set of at least $r+1$ non-empty 
polytopes in an $r$-dimensional space is dependent.

Observe that $\hat D_I\subseteq \hat B_1\setminus \hat B_2$ if and only if $\hat D_J\subseteq B_1\backslash B_2$,
where $J=I\cap\{1,\dots,r\}$. The number of non-empty 
polytopes in \eqref{polytopes-case1} 
is equal to $k_{I}(g_1)+1=k_J(g_1)+1$.
Thus the set of polytopes is dependent if $k_{J}(g_1)+1\geq d_J+1$.
Similarly for case (ii).

In case (iii), 
if $\hat D_I\subseteq (\hat B_1\cap \hat B_2)\cap \{t_1=0\}$, then the number of non-empty polytopes in \eqref{polytopes-case3}  is $k_J(g_2)$ and they live in a space of dimension $d_J$.
Similarly if $\hat D_I\subseteq (\hat B_1\cap \hat B_2)\cap \{t_2=0\}$.
On the other hand, if  $\hat D_I\subseteq \hat B_1\cap \hat B_2$ but neither $t_1$ nor $t_2$ vanishes along $\hat D_I$, then $k_{J}(g_1,g_2)$ 
counts the number of non-empty 
polytopes in \eqref{polytopes-case3}
and the polytopes in this set live in a space of dimension $d_J+1$ according to Remark \ref{rem-dim}.

We now prove the last statement.
Assume that either (i), (ii)  or (iii) fail for some $\hat D_{J}\subseteq B_1\cup B_2$.
For example assume that $\hat D_J\subseteq B_1\setminus B_2$  and $k_{J}(g_1)< \dim(\hat D_J)-1$. 
Let $I=J\cup\{r+1\}$, so that $\hat D_I\subseteq B$.
Observe that the intersection of the singular locus of $F$ with $\hat D_I$ is:
\[
\hat D_I\cap \{g_2=0\}\cap \{g_1^\rho=0: \rho \in J\}.
\]
If $S$ is an irreducible component of such locus, then
\[
\dim(S)\geq \dim(\hat D_{I})-k_{J}(g_1)-1=\dim(\hat D_{J})-k_{J}(g_1)-1>0,
\]
thus $X$ is not quasismooth.
Similarly when $\hat D_{J}\subseteq  B_2\setminus  B_1$.
Now assume that $\hat D_{J}\subseteq  B_1\cap  B_2$.
If $k_{J}(g_i)<\dim(\hat D_{J})$ for some $i$, 
then by \cite[Corollary 5.3]{ACGqs} the hypersurface defined by $g_i$ is not quasismooth along $\hat D_J$ and thus $X$ is not quasismooth.
If  $k_{J}(g_i)\geq \dim(\hat D_{J})$ 
for $i=1,2$ and $k_{J}(g_1,g_2)\leq  \dim (\hat D_{J})$, 
the only possibility is 
\[k_{J}(g_1)=k_{J}(g_2)=k_{J}(g_1,g_2)= \dim \hat D_{J}=:k\]
i.e. the indices $\rho\in J$ such that $\Delta_J(g_i^{\rho})\neq\emptyset$ are the same for $i=1,2$.
Thus if $S$ is a component of the intersection of the singular locus of $F$ with ${\hat\pi}^{-1}(\hat D_J)\subseteq \hat B_1\cap \hat B_2$, then
\[
\dim(S)\geq \dim(\hat \pi^{-1}(\hat D_{J}))-k=\dim(\hat D_{J})+1-k>0,
\]  
and $X$ is not quasismooth.
\end{proof}

Translating the condition in Theorem \ref{cor-qs} in terms of monomials, one obtains the following result (see also \cite[Theorem 8.7]{F}).

\begin{corollary}
Let $Z$ be a fake weighted projective space with homogeneous cordinates $x_0,\dots, x_n$, where $n\geq 2$, and $X\subseteq Z$ be a well-formed subvariety defined by general $g_1,g_2\in \mathcal R(Z)$. Then $X$ is a quasismooth complete intersection if and only if for any non-empty subset $I$ of $\{0,\dots,n\}$ one of the following holds
\begin{enumerate}
\item $g_1$ and $g_2$ both contain a monomial of the form $x_I^{M}$, i.e. in the variables $x_i$, $i\in I$;
\item $g_2$ contains a monomial of the form $x_I^{M}$ and $g_1$ contains monomials of the form $x_I^{M_1}x_{i_1},\dots,  x_I^{M_d}x_{i_d}$, where $i_1,\dots,i_d$ are distinct; 
\item  $g_1$ contains a monomial of the form $x_I^{M}$ and $g_2$ contains  (at least)  monomials  of the form $x_I^{M_1}x_{i_1},\dots,  x_I^{M_d}x_{i_d}$, where $i_1,\dots,i_d$ are distinct;
\item $g_1$ contains $d+1$ monomials of the form $x_I^{M_1}x_{i_1},\dots,  x_I^{M_{d+1}}x_{i_{d+1}}$ and
$g_2$ contains $d+1$ monomials of the form $x_I^{N_1}x_{j_1},\dots,  x_I^{N_{d+1}}x_{j_{d+1}}$ such that  $i_1,\dots,i_{d+1}$ are distinct, $j_1,\dots,j_{d+1}$ are distinct and 
$\{i_1,\dots,i_{d+1},j_1,\dots, j_{d+1}\}$ has cardinality at least $d+2$;
\end{enumerate}
where $d:=|I|-1$. 
\end{corollary}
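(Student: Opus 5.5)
The plan is to read the Corollary off Theorem \ref{cor-qs} in the fake weighted projective case, where the conditions there are both necessary and sufficient. For a nonempty $I\subseteq\{0,\dots,n\}$ set $J=\{0,\dots,n\}\setminus I$, so that $\hat D_J=\{x_j=0: j\notin I\}$ is the stratum where only the variables $x_i$, $i\in I$, may be nonzero. Then $\dim(\hat D_J)=|I|$ and $d_J=\dim(D_J)=|I|-1=d$. The key dictionary is the following.
\begin{itemize}
\item $\hat D_J\not\subseteq B_k$ exactly when $g_k$ contains a monomial $x_I^M$ in the variables of $I$ alone.
\item When $\hat D_J\subseteq B_k$, we write $g_k=\sum_{\rho\in J}x_\rho g_k^\rho$. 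Then $\Delta_J(g_k^\rho)\neq\emptyset$ exactly when $g_k$ contains a monomial $x_I^{M}x_\rho$.
\end{itemize}
Hence $k_J(g_k)$ is the number of distinct $\rho\notin I$ such that $g_k$ contains some monomial $x_I^{M}x_\rho$. Likewise, $k_J(g_1,g_2)$ is the size of the union of the corresponding index sets for $g_1$ and $g_2$.

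Next I will go through the cases according to which base loci contain $\hat D_J$. If $\hat D_J$ lies in neither $B_1$ nor $B_2$, there is no condition, and this is item (i). If $\hat D_J\subseteq B_1\setminus B_2$, then $g_2$ has a pure monomial $x_I^M$. Theorem \ref{cor-qs}(i) asks for $k_J(g_1)\ge d$, that is, monomials $x_I^{M_j}x_{i_j}$ in $g_1$ with $d$ distinct indices $i_j$. This is item (ii); symmetrically, case (ii) of the theorem gives item (iii). If $\hat D_J\subseteq B_1\cap B_2$, then condition (iii) of the theorem asks for $k_J(g_i)\ge d+1$ for $i=1,2$ and $k_J(g_1,g_2)\ge d+2$. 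In monomial language this says that each $g_i$ has $d+1$ monomials of the form $x_I^{\cdot}x_{\rho}$ with distinct indices $\rho$, and that the union of the two index sets has at least $d+2$ elements. This is item (iv).

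It remains to match the quantifiers. Every nonempty $I$ gives a nonempty stratum $\hat D_J\subseteq\widehat Z$, because the irrelevant locus of a fake weighted projective space is only the origin. Conversely, every toric stratum contained in $B_1\cup B_2$ arises from some $I$ in this way. Sufficiency and necessity then both follow from the ``if and only if'' in Theorem \ref{cor-qs}.

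The main obstacle is the use of the hypothesis $n\geq 2$ and of necessity. We must check that a stratum violating the conditions really produces a positive-dimensional singular locus, as in the proof of the theorem. We must also check the edge case $|I|=1$, where $d=0$. There item (ii) becomes vacuous apart from the pure monomial, and item (iv) requires only one monomial $x_i^{M}x_\rho$ in each $g_k$, with the union of the indices $\rho$ having at least two elements. I would verify directly that these agree with Theorem \ref{cor-qs} at the coordinate points.
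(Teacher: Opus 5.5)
Your proposal is correct and is essentially the paper's argument: the paper obtains the Corollary by translating the conditions of Theorem \ref{cor-qs}, which are necessary and sufficient on a fake weighted projective space, into monomial language via the strata $\hat D_J$, where $J$ is the complement of $I$, $\dim(D_J)=|I|-1=d$, and $k_J(g_k)$ counts the indices $\rho\notin I$ for which $g_k$ contains some monomial $x_I^{M}x_\rho$. The only step left implicit, here and in the paper, is the elementary check that $k_J(g_1),k_J(g_2)\geq d+1$ together with $k_J(g_1,g_2)\geq d+2$ is equivalent to choosing $d+1$ distinct indices for each $g_k$ whose union has at least $d+2$ elements; your worries about necessity and about $n\geq 2$ are already covered by the theorem and play no role in the translation.
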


\section{Generalized nef partitions}\label{sec-gnp}
In this section we generalize the notion of nef partition given in 
\cite{BB, Bor1} to the case when the polytope is not reflexive but its polar polytope is canonical. 
Equivalently, we assume the 
toric variety to be $\qq$-Fano with canonical singularities and not necessarily Fano.
We first deal with polytopes, while the geometric interpretation will be given in Section \ref{sec-geometry}.

\subsection{Definition and first properties}
Let $\Delta\subset M_\qq$ be a full-dimensional lattice polytope, i.e. whose vertices are points of $M$, which  contains the origin as an interior point. The {\em polar} polytope of $\Delta$ is \[\Delta^\circ:=\{n\in N_\qq: \langle m,n\rangle\geq -1\ \forall m\in \Delta\}.\]
  The polytope $\Delta$ is called {\em $\qq$-Fano} if its vertices are primitive in $M$, {\em canonical} if the origin is its unique interior lattice point and {\em reflexive} if $\Delta^\circ$ is a lattice polytope.
In what follows we will denote by $\ell^*(\Delta)$ the number of lattice points in the relative interior of $\Delta$.

Let $\Delta\subseteq M_{\qq}$ be a polytope containing the origin as an interior point. 
Consider a partition of the set of vertices of $\Delta^\circ$:
 \[
 {\rm Vert}(\Delta^\circ)=I_1\sqcup\ldots\sqcup I_s.
 \]
For $i=1,\ldots,s$ let 
\begin{equation}\label{deltai}
\Delta_i:=\{m\in M_{\rr}:  \langle  m,n_\rho\rangle\geq -{\bf 1}_{i}(n_\rho) \mbox{ for all }n_\rho\in {\rm Vert}(\Delta^\circ)\}
\end{equation}
with ${\bf 1}_{i}(n_\rho)=1$ if $n_\rho\in I_i$ and 0 otherwise.

\begin{proposition}\label{car}
The following properties hold:
\begin{enumerate}
\item $0\in \Delta_i$ for any $i$;
\item $\Delta_i\cap\Delta_j=\{0\}$ if $i\not=j$;
\item $\Delta_1+\cdots+\Delta_s\subseteq \Delta$;
\item $\Delta_1+\cdots+\Delta_s=\Delta$ if and only if 
for any $i\in\{1,\ldots,s\}$ and for any facet $\sigma$ of $\Delta^\circ$ 
there exists a vertex $m_{i,\sigma}$ of $\Delta_i$ such that 
\begin{equation}\label{gnp}\langle m_{i,\sigma},n_{\rho}\rangle =-{\bf 1}_{i}(n_\rho),\ \forall n_\rho\in{\rm Vert}(\sigma).
\end{equation}
\end{enumerate}
\end{proposition}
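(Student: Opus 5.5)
We argue directly from the inequality descriptions, using throughout that $\Delta$ contains the origin in its interior, so $\Delta=(\Delta^\circ)^\circ=\{m\in M_\qq:\langle m,n_\rho\rangle\geq -1\ \forall n_\rho\in{\rm Vert}(\Delta^\circ)\}$. Each $n_\rho$ lies in exactly one $I_i$, so $\sum_i {\bf 1}_i(n_\rho)=1$ for every vertex $n_\rho$. This single identity drives every item.

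For (i), the origin satisfies all the defining inequalities \eqref{deltai}, since their right-hand sides are $\leq 0$. For (ii), let $m\in\Delta_i\cap\Delta_j$ with $i\neq j$. Each vertex $n_\rho$ is outside $I_i$ or outside $I_j$, so one of the two polytopes imposes $\langle m,n_\rho\rangle\geq 0$. Hence $m$ pairs nonnegatively with every vertex of $\Delta^\circ$. Since $0$ is interior to $\Delta^\circ$, these vertices positively span $N_\qq$, which forces $m=0$. For (iii), take $m_i\in\Delta_i$. Then
\[\Big\langle \sum_i m_i,n_\rho\Big\rangle\geq -\sum_i{\bf 1}_i(n_\rho)=-1,\]
so the sum lies in $\Delta$. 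In particular each $\Delta_i\subseteq\Delta$ by (i), so each $\Delta_i$ is bounded, i.e. a polytope.

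For (iv) we use the correspondence between facets $\sigma$ of $\Delta^\circ$ and vertices $v_\sigma$ of $\Delta$. Here $v_\sigma$ is the unique point with $\langle v_\sigma,n_\rho\rangle=-1$ for all $n_\rho\in{\rm Vert}(\sigma)$; uniqueness holds because the vertices of $\sigma$ span $N_\qq$, the affine hull of $\sigma$ not passing through the origin.

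Suppose first that the vertices $m_{i,\sigma}$ exist. Summing \eqref{gnp} over $i$ shows that $\sum_i m_{i,\sigma}$ takes the value $-1$ on every vertex of $\sigma$, so it equals $v_\sigma$. Thus every vertex of $\Delta$ lies in the convex set $\Delta_1+\cdots+\Delta_s$, and hence $\Delta\subseteq\sum_i\Delta_i$. Combined with (iii), this gives equality.

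Conversely, suppose $\Delta=\sum_i\Delta_i$, and write $v_\sigma=\sum_i m_i$ with $m_i\in\Delta_i$. For $n_\rho\in{\rm Vert}(\sigma)$ we get
\[-1=\sum_i\langle m_i,n_\rho\rangle\geq-\sum_i{\bf 1}_i(n_\rho)=-1,\]
so every inequality is an equality: $\langle m_i,n_\rho\rangle=-{\bf 1}_i(n_\rho)$ for all $n_\rho\in{\rm Vert}(\sigma)$. The set $F_i$ of points of $\Delta_i$ satisfying these equalities is cut out by supporting hyperplanes of valid inequalities, so it is a face of $\Delta_i$. It is nonempty because it contains $m_i$, and it is a polytope because $\Delta_i$ is bounded. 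Therefore $F_i$ contains a vertex, which is a vertex of $\Delta_i$ and can be taken as $m_{i,\sigma}$.

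The only delicate points are the boundedness of $\Delta_i$ (obtained from (iii)) and the passage from an arbitrary point $m_i$ to a vertex via the face $F_i$; the rest is bookkeeping with the identity $\sum_i{\bf 1}_i=1$.
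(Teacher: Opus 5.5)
Your proof is correct and follows the paper's argument essentially step by step: the same inequality bookkeeping for (i)--(iii), the same termwise-equality squeeze on a decomposition $v_\sigma=\sum_i m_i$ for the direct implication in (iv), and the same identification of $\sum_i m_{i,\sigma}$ with the polar vertex for the converse. The only differences are that you make explicit the boundedness of each $\Delta_i$ and use the face $F_i$ to replace the summand $m_i$ by a genuine vertex of $\Delta_i$. The paper leaves this last step implicit, relying on the standard fact that a vertex of a Minkowski sum decomposes as a sum of vertices of the summands.
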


\begin{proof}
Property (i) is clear from the definition.
If $m\in \Delta_i\cap\Delta_j$ with $i\not=j$, then  $\langle m, n_\rho \rangle\geq 0$ for all $n_\rho\in {\rm Vert}(\Delta^\circ)$ and thus $m=0$, proving (ii). 

Given $m_i\in \Delta_i$, $i=1,\dots,s$, we have that $\langle \sum_{i=1}^s m_i, n_{\rho}\rangle \geq -1$ for all $n_\rho\in {\rm Vert}(\Delta^\circ)$. Thus $\sum_{i=1}^s m_i\in (\Delta^\circ)^\circ=\Delta$, proving (iii).

To prove (iv), first assume that $\Delta_1+\cdots+\Delta_s=\Delta$
and let $\sigma$ be a facet of $\Delta^{\circ}$ and $m_{\sigma}\in \Delta$ be the corresponding polar vertex. We have that $m_\sigma=m_{1,\sigma}+\cdots+m_{s,\sigma}$, where $m_{i,\sigma}\in \Delta_i$. If $n\in I_i\cap \sigma$, then
\[
-1=\langle m_{\sigma},n\rangle=\sum_{j=1}^s\langle m_{j,\sigma},n\rangle\geq \langle m_{i,\sigma},n\rangle\geq -1,
\]
by definition of $\Delta_j$.
Thus all inequalities must be equalities and the $m_{i,\sigma}$'s satisfy the condition (\ref{gnp}).
Conversely, assume that condition (\ref{gnp}) holds.
As observed before we have that $\Delta_1+\cdots+ \Delta_s\subseteq \Delta$.
On the other hand, given a vertex $v\in \Delta$, let $\sigma_v$ be the corresponding facet of $\Delta^\circ$ and let $m_{i,\sigma_v}$ be the vertex of $\Delta_i$ as in (\ref{gnp}).
Then $\langle\sum_i m_{i,\sigma_v},n_{\rho}\rangle=-1$ for all $n_{\rho}\in {\rm Vert}(\sigma_v)$, thus $\sum_i m_{i,\sigma_v}=v$.
This proves that $\Delta\subseteq \Delta_1+\cdots+\Delta_s$.
\end{proof}
 
\begin{definition}\label{gnp2}
The set of polytopes $\Pi(\Delta)=\{\Delta_1,\ldots,\Delta_s\}$ 
associated to a partition of  ${\rm Vert}(\Delta^\circ)$ as in (\ref{deltai})
is a 
{\em generalized nef partition} of $\Delta$ if $\Delta=\Delta_1+\cdots+\Delta_s$ (or if the equivalent condition in Proposition \ref{car}\,(iv) holds).
\end{definition}

\begin{remark}\label{rmk-vertices}
As observed in the proof of the Proposition \ref{car}, if $\sigma$ is a facet of $\Delta^\circ$ and $m_{i,\sigma}$ is as in Proposition \ref{car} (iv), then $m:=\sum_{i=1}^s m_{i,\sigma}$ is the unique vertex of $\Delta$ such that $\langle m,n_{\rho}\rangle=-1$ for any $n_{\rho}\in {\rm Vert}(\sigma)$. 
On the other hand, given any vertex $m_i$ of $\Delta_i$, by a standard property of Minkowski sums, there exists $m'\in \sum_{j\not=i}\Delta_j$ such that $m_i+m'$ is a vertex of $\Delta$. Let $\sigma$ be the polar facet of $\Delta^\circ$, then
\[
m_i+m'=\sum_{j=1}^s m_{j,\sigma}.
\]
By the uniqueness of the decomposition we deduce that $m_i=m_{i,\sigma}$. 
Thus, any vertex of $\Delta_i$ is of the form $m_{i,\sigma}$ for some facet $\sigma$ of $\Delta^\circ$.
 \end{remark}

\begin{remark}\label{rem-gnp}
The authors in \cite[Proposition 3.2]{KRS} prove that if $\Delta$ is reflexive, then Definition \ref{gnp2} is equivalent to give $P_1,\dots,P_s$ lattice polytopes such that $\Delta=P_1+\ldots+P_s$ and $P_i\cap P_j=\{0\}$ for $i\not=j$.

This is false in general if $\Delta$ is canonical but not reflexive.
Let 
\[\begin{array}{l}
P_1=\Conv((0,0,0),(1,1,0),(1,0,1),(0,1,0),(1,2,1))\\
P_2=\Conv((0,0,0),(0,-1,0),(0,0,-1),(-1,-1,0)).
\end{array}
\]
One has $P_1\cap P_2=\{0\}$ and 
$\Delta=P_1+P_2$ is canonical since its vertices are lattice points and the only lattice point in its interior is the origin, but $\Delta$ is not reflexive. One can see that  $n=\left(0,-\frac12,\frac12\right)$ is a vertex of $\Delta^\circ$ such that $\langle m,n\rangle\neq -1$ for any $m\in{\rm Vert}(P_i),\ i=1,2$. Thus there is no partition of the vertices of $\Delta^\circ$ whose associated polytopes $\Delta_i$, as in \eqref{deltai}, are $P_1,P_2$.

\end{remark}

 \begin{example}\label{exP113}
Let $Z=\pp(1,1,3)$ and $\Delta:=\Conv((-1,-3), (-1,2),(2/3, 1/3))$ be its anticanonical polytope. Observe that $\Delta$ is not reflexive.
The vertices of $\Delta^\circ$ are the minimal generators of the fan of $Z$:
\[
n_1=(1,0),\ n_2=(-1,-1),\ n_3=(-2,1).
\]
Consider the partition of $\text{Vert}(\Delta^\circ)$ given by
 $I_1=\{n_1\}, I_2=\{n_2,n_3\}$. By \eqref{deltai} one obtains the polytopes (in light gray in Figure \ref{fig113})
\[
\Delta_1=\Conv((0,0),(-1,-2),(-1,1)),\  \Delta_2=\Conv((0,-1), (0,1), (2/3,1/3)),
\]
which define a generalized nef partition of $\Delta$ since $\Delta_1+\Delta_2=\Delta$. 
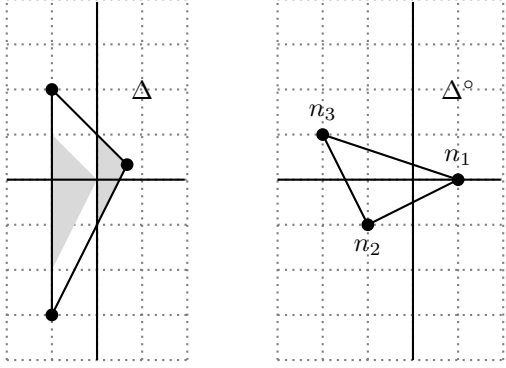
\begin{figure}[ht]
\centering\begin{tikzpicture}[>=stealth',shorten >=1pt,auto,node distance=1.5cm, thick,main node/.style={circle,draw,font=\bfseries},xscale=.6,yscale=.6]

\draw[step=1cm,gray,dotted] (-2,-4) grid (2,4);

\node at (1,2) {$\Delta$};

\fill[gray!30] (0,0)-- (-1,-2)--(-1,1);
\fill[gray!30] (0,-1)-- (0,1)--(2/3, 1/3);

\draw (-1,-3)--(-1,2);
\draw (-1,-3)--(2/3, 1/3);
\draw (-1,2)--(2/3, 1/3);
\node at (-1,-3) {\pgfuseplotmark{*}}; 
\node at (-1,2) {\pgfuseplotmark{*}};
\node at (2/3, 1/3) {\pgfuseplotmark{*}};

\draw (0,-4)--(0,4);
\draw (-2,0)--(2,0);

\draw [dotted] (0,-2)--(0,3);


\draw[step=1cm,gray,dotted] (4,-4) grid (9,4);
\node  at (8,0) {\pgfuseplotmark{*}};
\node at (8,0.5) {$n_1$};

\node at (6,-1) {\pgfuseplotmark{*}};
\node at (6,-1.5) {$n_2$};
\node at (5,1) {\pgfuseplotmark{*}};
\node at (5,1.5) {$n_3$};

\draw [thick](8,0)--(6,-1);
\draw [thick] (8,0)--(5,1);
\draw [thick](5,1)--(6,-1);

\draw (7,-4)--(7,4);
\draw (4,0)--(9,0);


\node at (8,2) {$\Delta^\circ$};

\end{tikzpicture}
\caption{Generalized nef partition $\Pi(\Delta)$}
\label{fig113}
\end{figure}
Observe that one can also consider different number of subsets of ${\rm Vert}(\Delta^\circ)$ for example 
\begin{equation}\label{eqs=3}
I_1\sqcup I_2\sqcup I_3=\{n_1\}\sqcup\{n_2\}\sqcup\{n_3\}.\end{equation}
With \eqref{deltai} one gets
\[
\Delta_1=\Conv((0,0), (0,1), (1/3, 2/3)),\ 
\Delta_2=\Conv((0,0), (0,-1), (1/3, -1/3)),\] 
\[\Delta_3=\Conv((0,0), (-1,-2), (-1,1))).\ 
\]
This is a generalized nef partition of $\Delta$ sicne $\Delta=\sum_{i=1}^3\Delta_i$.

This can be observed more in general: for $Z=\pp(1,1,n)$, with $n\geq3$, the anticanonical polytope $\Delta$ of $Z$ is not reflexive and each partition of the vertices of $\Delta^\circ$ gives a generalized nef partition.

\end{example}

\begin{example}\label{ex-part}
Consider the reflexive polytope
 $$\Delta = \text{Conv}((2, 1), (-1, 1), (-1, -1), (0, -1)),$$ 
 whose normal fan defines the toric variety $Z = \text{Bl}_p(\mathbb{P}^2)$, the blow-up of $\mathbb P^2$ at one point $p$.
The vertices of $\Delta^\circ$ are the minimal generators of the rays of the fan of $Z$:
 \[
n_1=(0,-1),\ n_2=(-1,1),\ n_3=(1,0),\ n_4=(0,1).
 \]
Consider the partition of $\text{Vert}(\Delta^\circ)$ given by $I_1 = \{n_2,n_4\}, I_2 = \{n_1, n_3\}$. 
 By \eqref{deltai} one obtains the following polytopes: 
$$\Delta_{1} = \text{Conv}((0,0), (0,-1),(1,0)),\ \Delta_{2} = \text{Conv}((0,0), (-1,1), (-1,0), (1,1)).$$

Observe that $\Pi(\Delta)=\{\Delta_1,\Delta_2\}$ is a generalized nef partition (actually a nef partition in the classical sense of \cite{BB}) since $\Delta=\Delta_1+\Delta_2$.

On the other hand, if we consider the partition of ${\rm Vert}(\Delta^\circ)$ given by $I'_1=\{n_2,n_3\}$ and $I'_2=\{n_1,n_4\}$, then $\Delta'_1+\Delta'_2\neq\Delta$ thus $\Delta'_1,\Delta'_2$ does not give a generalized nef partition of $\Delta$. Equivalently, if $i=2$ and $\sigma$ is the facet of $\Delta^\circ$ with vertices $n_3$ and $n_4$, there is no  vertex $m_{i,\sigma}$ of $\Delta'_2$ as in Proposition \ref{car} (iv).

Up to symmetries, there are two more possible generalized nef partitions of $\Delta$, associated to the partitions
$\{n_1\}\sqcup\{n_2,n_3,n_4\}$
and $\{n_2\}\sqcup\{n_1,n_3,n_4\}$.

\end{example}

\subsection{Duality}\label{duality}
Let $\Pi(\Delta)=\{\Delta_1,\ldots,\Delta_s\}$ 
be a generalized nef partition of $\Delta$ associated to  
a partition $I_1\sqcup\ldots\sqcup I_s={\rm Vert}(\Delta^\circ)$. We now introduce a set of polytopes $\nabla_1,\dots,\nabla_s$ dual to  
$\Delta_1,\dots,\Delta_s$.

\begin{definition}\label{def-nabla}
For $j=1,\ldots,s$ define 
\[
\nabla_j:=\{n\in N_\rr: \langle m,n\rangle\geq -\delta_{i,j} \ \forall m\in\Delta_i,\ i=1,\ldots,s\}
\]
and let $\nabla:=\nabla_1+\ldots+\nabla_s$.
\end{definition}
\noindent Observe that $0\in \nabla_j$ and that $\nabla_j\subseteq \Delta^\circ$, for any $j$.

\begin{proposition}\label{prop:origin}
The following results hold:
\begin{enumerate} 
\item $\nabla_j={\rm Conv}(0,I_j)$ for all $j$.
\item $\Delta^\circ={\rm Conv}(\nabla_1,\dots,\nabla_s)$.
\item $\nabla$ and $\nabla^\circ$ contain the origin in their interior.
\end{enumerate}
\end{proposition}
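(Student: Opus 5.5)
For (i), I will prove the two inclusions separately. Since $\Pi(\Delta)$ is a generalized nef partition, we have $\Delta=\Delta_1+\cdots+\Delta_s$.

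\emph{The inclusion ${\rm Conv}(0,I_j)\subseteq\nabla_j$.} The origin lies in $\nabla_j$ because every constraint in Definition \ref{def-nabla} has nonpositive right-hand side. For $n_\rho\in I_j$, the definition (\ref{deltai}) gives $\langle m,n_\rho\rangle\geq -{\bf 1}_i(n_\rho)=-\delta_{i,j}$ for all $m\in\Delta_i$. Hence $n_\rho\in\nabla_j$, and $\nabla_j$ is convex.

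\emph{The inclusion $\nabla_j\subseteq{\rm Conv}(0,I_j)$.} First, $\nabla_j\subseteq\Delta^\circ$. Indeed, any $m\in\Delta$ can be written $m=\sum_i m_i$ with $m_i\in\Delta_i$, so $\langle m,n\rangle\geq\sum_i(-\delta_{i,j})=-1$. Now take $n\in\nabla_j$ and write $n$ as a convex combination of vertices of $\Delta^\circ$. The aim is to show that only vertices in $I_j$ (and the origin) are needed. I would use a separation argument: suppose $n\notin C:={\rm Conv}(0,I_j)$, and pick a linear functional $m\in M_\qq$ with $\langle m,n\rangle<-1\le \langle m,c\rangle$ for all $c\in C$. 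Here we may normalize the right-hand side to $-1$ because $0\in C$. Then $m\in\Delta_j'$, where
\[
\Delta_j':=\{m:\langle m,n_\rho\rangle\ge -1 \ \forall n_\rho\in I_j\}.
\]
This is not quite $\Delta_j$, since $\Delta_j$ additionally requires $\langle m,n_\rho\rangle\geq 0$ for $n_\rho\notin I_j$. So a separating functional taken in $\Delta_j$ is needed.

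This is the main obstacle, and I would handle it with Farkas duality. The polytope $\nabla_j$ is, by definition, the polar-type dual of the polyhedron data $(\Delta_i)$. Equivalently, $n\in\nabla_j$ if and only if $\langle m,n\rangle\ge -1$ for $m\in\Delta_j$ and $\langle m,n\rangle\ge 0$ for $m\in\Delta_i$, $i\neq j$.

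Note that $\Delta_j$ is the polar of ${\rm Conv}(0,I_j)+\Cone(I_k:k\neq j)$. So the bipolar theorem, applied with the convention that $0$ belongs to both sets, gives that the set of $n$ with $\langle m,n\rangle\ge-1$ for all $m\in\Delta_j$ equals ${\rm Conv}(0,I_j)+\Cone(\bigcup_{k\ne j}I_k)$. Write such an $n$ as $n=c+\sum_{k\neq j}\lambda_\rho n_\rho$ with $c\in{\rm Conv}(0,I_j)$ and $\lambda_\rho\ge 0$.

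It remains to show that all $\lambda_\rho=0$, using the conditions coming from $\Delta_i$, $i\ne j$. For a fixed $i\neq j$, I would use the vertices $m_{i,\sigma}$ from Proposition \ref{car}(iv). Choose a facet $\sigma$ of $\Delta^\circ$ containing a given $n_\rho\in I_i$ with $\lambda_\rho>0$. Then $\langle m_{i,\sigma},n_\rho\rangle=-1$, while $\langle m_{i,\sigma},\cdot\rangle\ge0$ on all other vertices outside $I_i$ and $\geq -1$ on $I_i$. Summing over $i\ne j$, the vectors $m':=\sum_{i\neq j}m_{i,\sigma}$ satisfy $\langle m',n\rangle\ge0$, but pairing with the decomposition gives a negative contribution.

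To make this precise, instead use the point $\tilde m=\sum_{i\ne j} m_{i,\sigma}$ together with $\langle m_{j,\sigma},\cdot\rangle$. One checks that $\langle \tilde m, c\rangle\le$ a bound and derives a contradiction with $\lambda_\rho>0$. This bookkeeping is where I expect the real work to be, possibly via the cone generated by $\sigma$ containing $n$.

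Given (i), part (ii) follows directly. ${\rm Conv}(\nabla_1,\dots,\nabla_s)={\rm Conv}(0,I_1\cup\cdots\cup I_s)={\rm Conv}({\rm Vert}(\Delta^\circ))=\Delta^\circ$, since $0\in\Delta^\circ$.

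For (iii), note that $\nabla\supseteq{\rm Conv}(\nabla_1,\dots,\nabla_s)$, because each $\nabla_j$ contains $0$ and so $\nabla_j\subseteq\nabla$. Hence $\nabla\supseteq\Delta^\circ$, which contains $0$ in its interior. The polytope $\nabla$ is bounded, so $\nabla^\circ$ is bounded and contains $0$ in its interior. Moreover $\nabla^\circ\subseteq\Delta$, which is compact.
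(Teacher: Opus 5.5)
\textbf{There is a genuine gap in (i).} Your easy inclusion $\Conv(0,I_j)\subseteq\nabla_j$, your proof of $\nabla_j\subseteq\Delta^\circ$, and parts (ii) and (iii) are fine. In fact (ii) and (iii) use only these two inclusions, not the hard half of (i), so they survive. The gap is the inclusion $\nabla_j\subseteq\Conv(0,I_j)$: the proposal does not prove it, and the route you set up cannot close as stated.

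After the bipolar step you take an \emph{arbitrary} decomposition $n=c+\sum_{n_\rho\notin I_j}\lambda_\rho n_\rho$ and aim to show that all $\lambda_\rho=0$. This is false in general. Since $0$ is interior to $\Delta^\circ$, one can write $0=\sum_\rho \mu_\rho n_\rho$ with all $\mu_\rho>0$. Then for small $t>0$, $0=t\sum_{n_\rho\in I_j}\mu_\rho n_\rho+\sum_{n_\rho\notin I_j}t\mu_\rho n_\rho$ is a decomposition of $0\in\nabla_j$ of your form with every $\lambda_\rho>0$.

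Your sign bookkeeping fails for the same reason. The functional $m_{i,\sigma}$ gives $-1$ on the chosen $n_\rho$. But it is $\geq 0$ on $c$ and on every vertex outside $I_i$, and vertices of $I_i$ not on $\sigma$ can also pair positively with it. So nothing forces $\langle m_{i,\sigma},n\rangle<0$, and summing over $i\neq j$ does not change this.

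\textbf{The missing idea is to represent $n$ on a single facet.} This is the paper's argument.
Because the cones over the facets of $\Delta^\circ$ cover $N_\qq$, write $n=\sum_{n_\rho\in\sigma}\lambda_\rho n_\rho$ with $\lambda_\rho\geq 0$ for one facet $\sigma$. Every vertex in the support then lies on $\sigma$, where $m_{i,\sigma}$ takes the values $-\mathbf{1}_i(n_\rho)\in\{0,-1\}$. Hence $\langle m_{i,\sigma},n\rangle=-\sum_{n_\rho\in\sigma\cap I_i}\lambda_\rho$.

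For $i\neq j$, the condition $n\in\nabla_j$ forces this to be $\geq 0$, so $\lambda_\rho=0$ on $\sigma\cap I_i$. For $i=j$, it gives $\sum_{n_\rho\in\sigma\cap I_j}\lambda_\rho\leq 1$, so $n\in\Conv(0,\sigma\cap I_j)$.

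The paper runs this contrapositively. It takes $n\in\Delta^\circ\setminus\Conv(0,I_j)$ written with positive coefficients on the vertices of a face $\alpha$. Some vertex $n_{\rho_0}\in\alpha$ then lies in $I_i$ with $i\neq j$, and any facet $\sigma\supseteq\alpha$ gives $\langle m_{i,\sigma},n\rangle\leq-\lambda_{\rho_0}<0$.

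Your closing hint about ``the cone generated by $\sigma$ containing $n$'' points in this direction. However, the Farkas/bipolar decomposition is a detour that throws away exactly the face structure the argument needs.
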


\begin{proof}
    Fix $j\in\{1,\ldots,s\}$. 
    Observe that for all $n_\rho\in I_j$,  $i\in\{1,\ldots,s\}$ and $ m\in\Delta_i$ one has 
    $\langle m,n_\rho\rangle\geq -${\bf 1}$_{I_j}(n_\rho)=-\delta_{i,j}$. Since moreover $0\in\nabla_j$, we have proven $\Conv(0,I_j)\subseteq \nabla_j$.

    Now take $n\in N_\rr\backslash \Conv(0,I_j)$. We prove that $n\notin\nabla_j$. Since $\Cone(\Delta^\circ)=N_\rr$, then $n=\sum_{n_\rho\in\alpha}\lambda_\rho n_\rho$, where $\alpha$ is a face of $\Delta^\circ$ and $\lambda_\rho$ are positive real numbers. 
    If $n\not\in\Delta^\circ$, then
    $n \notin \nabla_j$ by the previous remark.
Let us now assume that $n \in \Delta^\circ$. Since $n \notin \text{Conv}(0,I_j)$, we have $\alpha \nsubseteq \text{Conv}(I_j)$, i.e., there exists $n_{\rho_0} \in \alpha$ and $i \neq j$ such that $n_{\rho_0} \in I_i$. Then for any facet $\sigma$ of $\Delta^\circ$ containing $\alpha$, we have
\[
\langle m_{i, \sigma}, n \rangle = \sum_{n_\rho \in \alpha} \lambda_\rho \langle m_{i, \sigma}, n_\rho \rangle \leq \lambda_{\rho_0} \langle m_{i, \sigma}, n_{\rho_0} \rangle,
\]
because for all $n_\rho \in \alpha$, $\lambda_\rho > 0$ and $\langle m_{i, \sigma}, n_\rho \rangle \in \{0,-1\}$. Finally, since $n_{\rho_0} \in \sigma \cap I_i$, we have $\langle m_{i, \sigma}, n_{\rho_0} \rangle = -1$, so we get
\[
\langle m_{i, \sigma}, n \rangle \leq -\lambda_{\rho_0} < 0,
\]
which shows that $n$ is not in $\nabla_j$ in this case either and completes the proof (i).
Item (ii) is an immediate consequence of  (i).
Finally, observe that $\Delta^\circ\subseteq \nabla$ contains the origin in its interior, thus the same is true for $\nabla$. By duality this holds also for $\nabla^\circ$, proving (iii).
\end{proof}

\begin{proposition}\label{dualgnp}
$\Pi(\nabla)=\{\nabla_1,\ldots,\nabla_s\}$ 
is a generalized nef partition of $\nabla$ and for $i=1,\ldots,s$, we have $$\Delta_i = \{ m \in M_\rr \, | \, \langle m, n \rangle \geq -\delta_{i,j} \text{ for all } n \in \nabla_j, \, j = 1,\ldots,s \}.$$
\end{proposition}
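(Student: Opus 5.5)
The proof splits into two parts: first the description of $\Delta_i$, which is elementary, and then the claim that $\Pi(\nabla)$ comes from a partition of ${\rm Vert}(\nabla^\circ)$ as in (\ref{deltai}). The sum condition $\nabla=\nabla_1+\dots+\nabla_s$ needs no argument, since $\nabla$ is defined that way in Definition \ref{def-nabla}.

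\emph{Formula for $\Delta_i$.} By Proposition \ref{prop:origin}\,(i), $\nabla_j={\rm Conv}(0,I_j)$. The condition $\langle m,n\rangle\ge-\delta_{i,j}$ for all $n\in\nabla_j$ therefore reduces to its values at the points $0$ and $n_\rho\in I_j$. At $0$ it is automatic. At $n_\rho\in I_j$ it reads $\langle m,n_\rho\rangle\ge -{\bf 1}_i(n_\rho)$. Letting $j$ range over $1,\dots,s$, this is exactly the system (\ref{deltai}), so the displayed formula holds.

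\emph{The partition of ${\rm Vert}(\nabla^\circ)$.} Set $K={\rm Conv}(\Delta_1,\dots,\Delta_s)$. The steps are as follows.
\begin{enumerate}
\item Show $\nabla^\circ=K$.
\item Deduce that the nonzero vertices of the $\Delta_i$ are exactly the vertices of $\nabla^\circ$.
\item Partition these vertices as $J_i={\rm Vert}(\nabla^\circ)\cap\Delta_i$. The $J_i$ are disjoint because $\Delta_i\cap\Delta_j=\{0\}$ by Proposition \ref{car}\,(ii), and $0$ is not a vertex since it is interior by Proposition \ref{prop:origin}\,(iii).
\item With $\Delta_i={\rm Conv}(0,J_i)$, the polytope attached to $J_j$ by (\ref{deltai}) is $\{n:\langle m,n\rangle\ge-\delta_{i,j}\ \forall m\in\Delta_i,\ \forall i\}$, which is $\nabla_j$ by Definition \ref{def-nabla}.
\end{enumerate}

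For step (1), the inclusion $K\subseteq\nabla^\circ$ is immediate. If $m\in\Delta_i$ and $n=\sum_j n_j$ with $n_j\in\nabla_j$, then $\langle m,n\rangle\ge\sum_j(-\delta_{i,j})=-1$.

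The reverse inclusion $\nabla^\circ\subseteq K$ is the main obstacle. I would use the support functions $h_j(m)=\min\bigl(0,\min_{n_\rho\in I_j}\langle m,n_\rho\rangle\bigr)$, so that $\nabla^\circ=\{m:\sum_j h_j(m)\ge-1\}$. Take $m\in\nabla^\circ\subseteq\Delta$; the inclusion holds because $\Delta^\circ\subseteq\nabla$. Write $m$ in the cone over a face $F$ of $\Delta$, so $m=\sum_k\mu_k v_k$ with $\mu_k\ge0$. The vertices $v_k$ of $F$ correspond to facets $\sigma_k$ of $\Delta^\circ$, all containing the face $F^*$ dual to $F$. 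By Remark \ref{rmk-vertices}, each $v_k=\sum_i m_{i,\sigma_k}$. Regrouping gives $m=\sum_i w_i$ with $w_i=\sum_k\mu_k m_{i,\sigma_k}$. Now evaluate at vertices $n_\rho\in{\rm Vert}(F^*)\cap I_j$; these lie in every $\sigma_k$. Condition (\ref{gnp}) then gives $\langle w_i,n_\rho\rangle=-\delta_{i,j}\sum_k\mu_k$, and from this I expect to show $w_i\in t_i\Delta_i$ with $t_i=-h_i(m)$ and $\sum_i t_i\le1$. That would exhibit $m$ as a convex combination of points of the $\Delta_i$ and $0$, i.e.\ $m\in K$. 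The delicate point is controlling $h_i(m)$ through the vertices of $F^*$ alone. If this fails, I would fall back on the polar statement $\bigcap_i\Delta_i^\circ=\nabla$, proved in the same way by decomposing $n$ along a cone of the fan of $\Delta^\circ$.

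For step (2), given a nonzero vertex $m_{i,\sigma}$ of $\Delta_i$, I would build a separating $n\in\nabla$. The natural candidate is the sum over $j$ of suitable points of $\nabla_j$ spanned by ${\rm Vert}(\sigma)\cap I_j$. By (\ref{gnp}) this $n$ gives $\langle m_{i,\sigma},n\rangle=-1$, and I would then check that the minimum of $\langle\cdot,n\rangle$ over $K$ is attained only at $m_{i,\sigma}$.
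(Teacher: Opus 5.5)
Your formula for $\Delta_i$, the inclusion $K\subseteq\nabla^\circ$ and steps (3)--(4) are fine, and your overall plan matches the paper's. The gap is in your primary argument for $\nabla^\circ\subseteq K$. The splitting $m=\sum_i w_i$ read off from the vertices of $\Delta$ is the wrong one, and the claim $w_i\in t_i\Delta_i$ with $t_i=-h_i(m)$ is false.

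Here is a counterexample. Take $\pp^2$ with $I_1=\{e_1\}$ and $I_2=\{e_2,-e_1-e_2\}$, so that $\Delta_1=\Conv(0,(-1,0),(-1,1))$ and $\Delta_2=\Conv((0,-1),(0,1),(2,-1))$. The point $m=(\tfrac12,\tfrac12)$ lies in $\Delta_2\subseteq\nabla^\circ$. It also lies in the relative interior of the cone over the edge $[(2,-1),(-1,2)]$ of $\Delta$, so $\mu_k=\tfrac12$ is forced. Since $(2,-1)=0+(2,-1)$ and $(-1,2)=(-1,1)+(0,1)$, you get $w_1=(-\tfrac12,\tfrac12)\neq0$, although $h_1(m)=0$. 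The smallest dilates of $\Delta_1,\Delta_2$ containing $w_1$ and $w_2=(1,0)$ have factors $\tfrac12$ and $1$, which sum to $\tfrac32>1$. Using all three vertices of $\Delta$ only makes the sum larger.

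The reason is that the $h_j$ are not linear on cones over faces of $\Delta$: here $h_1=\min(0,\langle\cdot,e_1\rangle)$ bends inside that cone, so the vertices of $F^*$ cannot control it.

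Your fallback is the correct route and is exactly Lemma \ref{lemma-part}. It is not ``the same way'', though. It works on the $N$ side because, by (\ref{gnp}), $\min_{u\in\Delta_j}\langle u,n\rangle=\langle m_{j,\sigma},n\rangle$ is linear on the cone over each facet $\sigma$ of $\Delta^\circ$. Write $n=\sum_{n_\rho\in\sigma}\lambda_\rho n_\rho$ with $\lambda_\rho\ge0$. The condition $n\in\Delta_j^\circ$ gives $\sum_{n_\rho\in\sigma\cap I_j}\lambda_\rho=-\langle m_{j,\sigma},n\rangle\le1$. Hence $n_j:=\sum_{n_\rho\in\sigma\cap I_j}\lambda_\rho n_\rho\in\Conv(0,I_j)=\nabla_j$ and $n=\sum_j n_j\in\nabla$. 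This shows $\bigcap_j\Delta_j^\circ=\nabla$, and by polarity $\nabla^\circ=K$.

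Step (2) also needs care, because the natural candidates do not isolate $m_{i,\sigma}$.
\begin{itemize}
\item If each $n_j$ is taken in the relative interior of $\Conv({\rm Vert}(\sigma)\cap I_j)$, the face of $K$ on which $\langle\cdot,n\rangle=-1$ is $\Conv(m_{k,\sigma}:\sigma\cap I_k\neq\emptyset)$, not $\{m_{i,\sigma}\}$. In the example, $n=e_1+e_2$ is minimized on all of $[(-1,0),(0,-1)]$.
\item Keeping only $n_i$ cuts out a face of $\Delta_i$ that can be an edge: $n=e_2$ gives $[(0,-1),(2,-1)]$.
\end{itemize}
A scaled choice does work. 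Let $n_j$ be the barycenter of ${\rm Vert}(\sigma)\cap I_j$ and set $n=n_i+\epsilon\sum_{j\neq i}n_j$ with $0<\epsilon<1$. For $m=\sum_k t_ku_k\in K$ with $u_k\in\Delta_k$ and $\sum_k t_k\le1$, one gets $\langle m,n\rangle\ge -t_i-\epsilon\sum_{k\neq i}t_k\ge -1$. Equality holds only if $t_i=1$ and $\langle u_i,n_\rho\rangle=-{\bf 1}_i(n_\rho)$ for all $n_\rho\in{\rm Vert}(\sigma)$, i.e.\ $u_i=m_{i,\sigma}$.

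The paper's argument is shorter. It writes $m_{i,\sigma}$ as a convex combination of vertices of $\nabla^\circ$ and pairs it with some $n\in\sigma\cap I_i$. This forces all the weight onto $J_i$, and a vertex of $\Delta_i$ lying in $\Conv(J_i)\subseteq\Delta_i$ must belong to $J_i$.
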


The proof of Proposition \ref{dualgnp} is done in two steps, each stated as a lemma.

\begin{lemma}\label{lemma-part}
$\nabla=\bigcap_{i=1}^{s}\Delta_i^\circ$ and $\nabla^\circ={\rm Conv}(\Delta_1,\ldots,\Delta_s)$.
\end{lemma}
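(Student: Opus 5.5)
We first prove the second equality $\nabla^\circ={\rm Conv}(\Delta_1,\ldots,\Delta_s)$ and then obtain the first one by polarity. By Proposition \ref{prop:origin}, $\nabla$ and $\nabla^\circ$ contain the origin in their interior, so bipolarity $(P^\circ)^\circ=P$ holds for all polytopes involved. The basic tool is the standard identity for the polar of a Minkowski sum:
\[
(\nabla_1+\cdots+\nabla_s)^\circ=\{m\in M_\qq:\ \textstyle\sum_j \min_{n\in\nabla_j}\langle m,n\rangle\geq -1\}.
\]
Since $\nabla_j={\rm Conv}(0,I_j)$ by Proposition \ref{prop:origin}(i), we have $\min_{n\in\nabla_j}\langle m,n\rangle=\min(0,\min_{n_\rho\in I_j}\langle m,n_\rho\rangle)=:-h_j(m)$, where $h_j\ge 0$ is piecewise linear. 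Hence $\nabla^\circ=\{m:\sum_j h_j(m)\le 1\}$.

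For the inclusion ${\rm Conv}(\Delta_1,\ldots,\Delta_s)\subseteq\nabla^\circ$, the right-hand side is convex, so it suffices to check that each $\Delta_i\subseteq\nabla^\circ$. For $m\in\Delta_i$, definition \eqref{deltai} gives $\langle m,n_\rho\rangle\ge 0$ for $n_\rho\notin I_i$ and $\langle m,n_\rho\rangle\ge -1$ for $n_\rho\in I_i$. Thus $h_j(m)=0$ for $j\ne i$ and $h_i(m)\le 1$, so $m\in\nabla^\circ$. Equivalently, each $\nabla_j$ pairs with $\Delta_i$ to at least $-\delta_{ij}$ by the very definition of $\nabla_j$, so $\Delta_i\subseteq\nabla^\circ$ directly.

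The reverse inclusion $\nabla^\circ\subseteq{\rm Conv}(\Delta_1,\ldots,\Delta_s)$ is the main step. Take $m$ with $\sum_j h_j(m)\le 1$. If $m=0$ we are done. Otherwise set $t_j=h_j(m)\ge 0$ and write $m$ as a convex combination $\sum_{j:t_j>0} t_j\,(m_j/t_j)+(1-\sum t_j)\cdot 0$ with $m_j\in t_j\Delta_i$-type pieces. The difficulty is that $m$ itself does not split naturally into pieces. To handle this, we pass through the duality on $N$: it is cleaner to prove the first equality $\nabla=\bigcap_i\Delta_i^\circ$ directly and then take polars, using $(\bigcap\Delta_i^\circ)^\circ={\rm Conv}(\bigcup\Delta_i)$, which holds since each $\Delta_i$ contains $0$.

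So the plan is to show $\bigcap_i\Delta_i^\circ\subseteq\nabla$; the inclusion $\nabla\subseteq\bigcap_i\Delta_i^\circ$ follows from the previous paragraph by polarity. Let $n\in\bigcap\Delta_i^\circ$. Since $\Cone(\Delta^\circ)=N_\qq$, write $n=\sum_{n_\rho\in\alpha}\lambda_\rho n_\rho$ with $\lambda_\rho>0$ on a face $\alpha$ of $\Delta^\circ$, and choose a facet $\sigma\supseteq\alpha$. Group the terms as $n=\sum_j\mu_j u_j$, where $\mu_j=\sum_{n_\rho\in\alpha\cap I_j}\lambda_\rho$ and $u_j\in{\rm Conv}(\alpha\cap I_j)$. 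Using the vertex $m_{i,\sigma}\in\Delta_i$ from Proposition \ref{car}(iv), we get $\langle m_{i,\sigma},n\rangle=-\mu_i$. Since $n\in\Delta_i^\circ$, this forces $\mu_i\le 1$. Then $\mu_iu_i\in{\rm Conv}(0,I_i)=\nabla_i$, so $n=\sum_i\mu_iu_i\in\nabla$. Taking polars of $\nabla=\bigcap\Delta_i^\circ$ then gives $\nabla^\circ={\rm Conv}(\Delta_1,\ldots,\Delta_s)$.

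The main obstacle is this decomposition step: it requires the facet vertices $m_{i,\sigma}$, which exist precisely because $\Pi(\Delta)$ is a generalized nef partition.
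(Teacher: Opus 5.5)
Your argument is correct. Once you drop the abandoned attempt via the functions $h_j$, it is essentially the paper's proof: the easy inclusion $\nabla\subseteq\bigcap_i\Delta_i^\circ$ comes from $\langle m,n_j\rangle\geq-\delta_{ij}$, and the reverse inclusion writes $n$ in the cone over a face contained in a facet $\sigma$, uses the vertices $m_{i,\sigma}$ to get $\mu_i\leq 1$, and splits $n$ along the classes $I_j$; the second equality then follows by polarity. The only differences are cosmetic. You conclude $\mu_iu_i\in\nabla_i$ from $\nabla_i={\rm Conv}(0,I_i)$ (Proposition \ref{prop:origin}(i)), whereas the paper checks the defining inequalities of $\nabla_i$ directly, and the Minkowski-sum polar formula in your opening paragraph is not needed.
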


\begin{proof}
Let $n=n_1+\cdots+n_s\in \nabla$ with $n_j\in \nabla_j$ for $j=1,\dots,s$.
Given $m\in \Delta_i$ we have 
\[
\langle m,n\rangle=\sum_{j=1}^s\langle m,n_j\rangle\geq -1
\]
by definition of $\nabla_j$. 
Thus $n\in \Delta_i^\circ$ for all $i$.

Viceversa, let $n\in \bigcap_{i=1}^{s}\Delta_i^\circ\subseteq N_{\rr}$. Since ${\rm Cone}(\Delta^\circ)=N_{\rr}$, $n$ belongs to the cone over a facet $\sigma$ of $\Delta^\circ$. Thus we can write $n=\sum_{n_\rho\in \sigma}\lambda_{\rho} n_{\rho}$, where $\lambda_{\rho}$ are non-negative integers. 
By the definition of generalized nef partition there, for any $i$ there exists $m_{i,\sigma}\in {\rm Vert}(\Delta_i)$ such that $\langle m_{i,\sigma}, n_{\rho}\rangle=-{\bf 1}_i(n_{\rho})$ for any $n_{\rho}\in \sigma$. Thus
\[
\langle m_{i,\sigma},n\rangle=-\sum_{\rho\in \sigma\cap I_i}\lambda_\rho\geq -1.
\]
Let $n_j:=\sum_{n_\rho\in \sigma\cap I_j}\lambda_{\rho}n_{\rho}$. if $m\in \Delta_i$, then
\[
\langle m,n_j\rangle=\sum _{n_\rho\in \sigma\cap I_j}\lambda_{\rho}\langle m,n_{\rho}\rangle
\geq  -\sum _{n_\rho\in \sigma\cap I_j}\lambda_{\rho}{\bf 1}_i(n_{\rho})\geq -\delta_{ij}.
\]
Thus $n_j\in \nabla_j$ and $n=\sum_{j=1}^s n_j\in \nabla$.

The second equality in the statement follows from the first one observing that 
the polar of a convex hull of polytopes is the intersection of their polar polytopes and that polarity is an involution.
\end{proof}

\begin{lemma}\label{4.12}
    There is a partition $J_1\sqcup\ldots\sqcup J_s$ of the vertices of $\nabla^\circ$ such that $J_i={\rm Vert}(\Delta_i)\backslash \{0\}$ for all $i=1,\ldots,s$. Moreover for all $j=1,\ldots,s$:
\[\nabla_j=\{n\in N_{\rr}: \langle m,n\rangle\geq -{\bf 1}_{J_j}(m) \mbox{ for all }m\in{\rm Vert}(\nabla^\circ)\}.
\]
\end{lemma}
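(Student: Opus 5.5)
By Lemma \ref{lemma-part} we have $\nabla^\circ={\rm Conv}(\Delta_1,\dots,\Delta_s)$. Hence ${\rm Vert}(\nabla^\circ)\subseteq\bigcup_i {\rm Vert}(\Delta_i)$. By Proposition \ref{prop:origin}(iii) the origin is interior to $\nabla^\circ$, so it is not a vertex. Thus every vertex of $\nabla^\circ$ is a nonzero vertex of some $\Delta_i$, and by Proposition \ref{car}(ii) this $i$ is unique. So the sets $J_i:={\rm Vert}(\Delta_i)\setminus\{0\}$ are disjoint, and their union contains ${\rm Vert}(\nabla^\circ)$. It remains to show that every nonzero vertex $m$ of $\Delta_i$ really is a vertex of $\nabla^\circ$.

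For this I would use Remark \ref{rmk-vertices}: $m=m_{i,\sigma}$ for some facet $\sigma$ of $\Delta^\circ$. Consider the linear functional given by
\[
n_\sigma:=\sum_{n_\rho\in \sigma\cap I_i}\lambda_\rho n_\rho ,
\]
with suitable $\lambda_\rho>0$. I then want to show that $m$ is the unique minimizer of $\langle\,\cdot\,,n_\sigma\rangle$ on ${\rm Conv}(\Delta_1,\dots,\Delta_s)$, or at least that $m$ is extremal.

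\textbf{Main obstacle.} A single functional may not isolate $m$. The more robust route is to argue directly that $m$ is not a convex combination of other points of $\bigcup_j\Delta_j$. Suppose $m=\sum_j t_j p_j$ with $p_j\in\Delta_j$ and $t_j\ge0$, $\sum_j t_j=1$. The value $\langle m,n_\rho\rangle=-1$ for $n_\rho\in\sigma\cap I_i$ is the minimum possible over all the $\Delta_j$: for $j\neq i$ the minimum is $0$ and for $j=i$ it is $-1$. This forces $t_j=0$ for $j\ne i$, provided $\sigma\cap I_i\neq\emptyset$. That condition holds because $m\neq0$: if $\sigma\cap I_i$ were empty, then $m_{i,\sigma}$ would be $0$ on the whole facet $\sigma$, hence $0$. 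The same computation then forces $p_i$ onto the face of $\Delta_i$ cut out by these equalities. So $m$ is a convex combination of points of $\Delta_i$, and since it is a vertex of $\Delta_i$, all those points equal $m$. Hence $m$ is a vertex of $\nabla^\circ$, and $J_1\sqcup\dots\sqcup J_s={\rm Vert}(\nabla^\circ)$.

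For the second statement, write $H_j$ for the right-hand side. Since ${\rm Vert}(\nabla^\circ)=\bigsqcup J_i$, the constraints defining $H_j$ are $\langle m,n\rangle\ge-\delta_{ij}$ for all nonzero vertices $m$ of $\Delta_i$, for all $i$. The constraint at the vertex $0$ is trivial. By convexity these constraints are equivalent to imposing the inequality on all of $\Delta_i$. This is exactly Definition \ref{def-nabla}, so $H_j=\nabla_j$.
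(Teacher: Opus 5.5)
Your proof is correct and is essentially the paper's argument. Both write $m=m_{i,\sigma}$ via Remark~\ref{rmk-vertices}, pick $n\in\sigma\cap I_i$ (nonempty because $m\neq 0$), and use that $\langle\cdot,n\rangle\geq -1$ on $\Delta_i$ and $\geq 0$ on $\Delta_j$ for $j\neq i$. This forces all the weight of a convex representation of $m$ onto $\Delta_i$, and then $m$ being a vertex of $\Delta_i$ concludes. The paper expands $m$ over ${\rm Vert}(\nabla^\circ)=\bigsqcup_j J_j$ rather than over arbitrary points of $\bigcup_j\Delta_j$, and you spell out details the paper calls clear (the origin being interior to $\nabla^\circ$, disjointness via Proposition~\ref{car}(ii)); these differences, like your unused single-functional idea, are inessential.
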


\begin{proof}
By Lemma \ref{lemma-part}, we clearly have a partition $\text{Vert}(\nabla^\circ) = \sqcup_i J_i$, where $J_i: = \text{Vert}(\nabla^\circ) \cap \Delta_i \subseteq \text{Vert}(\Delta_i)\backslash\{0\}$,
so we just have to prove that for all $i \in \{1, \ldots, s\}$, any vertex $m_{i}$ of $\Delta_i$ distinct from the origin is in $J_i$. 
Since such an $m_{i}$ is in $\nabla^\circ$, we can write $m_{i} = \sum_{v \in \text{Vert}(\nabla^\circ)} \lambda_v v$, with $\lambda_v \geq 0$ for all $v$ and with $\sum_{v \in \text{Vert}(\nabla^\circ)} \lambda_v = 1$. This can again be written $m_{i} = \sum_{j=1}^s \sum_{v \in J_j} \lambda_v v$. 

Now observe that by Remark \ref{rmk-vertices}, $m_i=m_{i,\sigma}$ for some facet $\sigma$ of $\Delta^\circ$. By definition $m_{i,\sigma}$ is equal to the origin if and only if the intersection $\sigma \cap I_i$ is empty. In particular, since $m_{i,\sigma} \neq 0$, there exists a vertex $n$ of $\Delta^\circ$ contained in $\sigma \cap I_i$ and we have $\langle m_{i,\sigma}, n \rangle = -1$. We get
\[
-1 = \sum_{j=1}^s \sum_{v \in J_j} \lambda_v \langle v, n \rangle \geq \sum_{j=1}^s \sum_{v \in J_j} \lambda_v (-\delta_{i,j}) = - \sum_{v \in J_i} \lambda_v \geq -1.
\]
It follows that all the inequalities above are equalities. In particular, $\sum_{v \in J_i} \lambda_v = 1$, which means that $m_{i,\sigma} = \sum_{v \in J_i} \lambda_v v$ is contained in the convex hull of $J_i$. Since it is a vertex of $\Delta_i$, it must be an element of $J_i$, which concludes the proof of the first statement.
The second statement is an immediate consequence of the first one since every vertex $m$ of $\nabla^\circ$ belongs to some $J_i$, hence verifies $-\mathbf{1}_{J_j}(m) = -\delta_{i,j}$.
\end{proof}

\begin{definition}
We will say that $\Pi(\nabla)=\{\nabla_1,\ldots,\nabla_s\}$ is the {\em dual} of the generalized nef partition $\Pi(\Delta)=\{\Delta_1,\ldots,\Delta_s\}$.
\end{definition}

 As a direct consequence of Definition \ref{def-nabla} and Proposition \ref{dualgnp}, the dual of $\Pi(\nabla)$ is again $\Pi(\Delta)$,
 thus we defined an involution in the set of generalized nef partitions.

\begin{example}[{\bf Continuation of Example \ref{exP113}}]\label{ex113-nabla}
Let $\Pi(\Delta)=\{\Delta_1,\Delta_2\}$ be the nef partition of Example \ref{exP113} associated to $I_1\sqcup I_2$. By Definition \ref{def-nabla} one gets
$$\nabla_{1} = \text{Conv}((0,0), (1,0))\mbox{ and } \nabla_{2} = \text{Conv}((0,0), (-1,-1), (-2,1)),$$ 
and 
$
\nabla = \nabla_{1} + \nabla_{2} = \text{Conv}((0,1),(-1, -2), (0, -1), (-1, 1), (2/3, 1/3))$
(see Figure \ref{fig113dual}).
The dual partition $J_1\sqcup J_2$ of vertices of $\nabla^\circ$ is 
\[
J_1\sqcup J_2=\{(0,-1),(0,1), (2/3, 1/3)\}\sqcup\{(-1,1),(-1,-2))\}.
\]

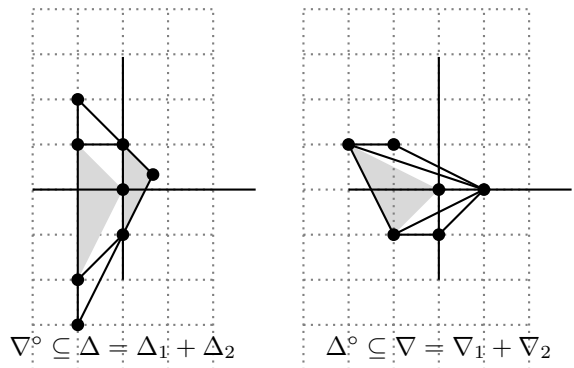
\begin{figure}[ht]
\centering\begin{tikzpicture}[>=stealth',shorten >=1pt,auto,node distance=1.5cm, thick,main node/.style={circle,draw,font=\bfseries},xscale=.6,yscale=.6]

\draw[step=1cm,gray,dotted] (-2,-4) grid (2,4);

\node at (0,-3.5) {$\nabla^\circ\subseteq\Delta=\Delta_1+\Delta_2$};

\fill[gray!30] (0,0)-- (-1,-2)--(-1,1);
\fill[gray!30] (0,-1)-- (0,1)--(2/3, 1/3);

\draw (-1,-3)--(-1,2);
\draw (-1,-3)--(2/3, 1/3);
\draw (-1,2)--(2/3, 1/3);
\node at (-1,-3) {\pgfuseplotmark{*}}; 
\node at (-1,2) {\pgfuseplotmark{*}};
\node at (2/3, 1/3) {\pgfuseplotmark{*}};

\node at (0, 0) {\pgfuseplotmark{*}};

\draw (0,-2)--(0,3);
\draw (-2,0)--(3,0);

\draw (-1,1)--(0,1);
\draw (-1,-2)--(0,-1);
\node at (-1,1) {\pgfuseplotmark{*}}; 
\node at (0,1) {\pgfuseplotmark{*}}; 
\node at (-1,-2) {\pgfuseplotmark{*}}; 
\node at (0,-1) {\pgfuseplotmark{*}}; 

\fill[gray!30] (6,-1)-- (5,1)--(7,0);

\draw[step=1cm,gray,dotted] (4,-4) grid (9,4);
\node  at (8,0) {\pgfuseplotmark{*}};
\node at (6,-1) {\pgfuseplotmark{*}};
\node at (5,1) {\pgfuseplotmark{*}};

\node at (7,0) {\pgfuseplotmark{*}};

\draw [thick](8,0)--(6,-1);
\draw [thick] (8,0)--(5,1);
\draw [thick](5,1)--(6,-1);

\draw (7,-2)--(7,3);
\draw (5,0)--(10,0);

\draw (5,1)--(6,1);
\draw (8,0)--(6,1);
\draw (6,-1)--(7,-1);
\draw (8,0)--(7,-1);

\node at (6,1) {\pgfuseplotmark{*}}; 
\node at (7,-1) {\pgfuseplotmark{*}};

\node at (7,-3.5) {$\Delta^\circ\subseteq\nabla=\nabla_1+\nabla_2$};

\end{tikzpicture}
\caption{Generalized nef partitions $\Pi(\Delta)$ and $\Pi(\nabla)$}
\label{fig113dual}

\end{figure}

The same can be done for the generalized nef partition $\Pi(\Delta)=\{\Delta_1,\Delta_2,\Delta_3\}$ associated to the partition in \eqref{eqs=3}. In this case, one obtains 
\[\nabla_1=\Conv((0,0), ((-1,-1)),\ 
\nabla_2=\Conv((0,0), (1,0)),\
\nabla_3=\Conv((0,0),(-2,1))
\]
In this case, $\nabla=\sum_{i=1}^3\nabla_i$ has vertices $(1,0), (-1,1), (-2,1), (-3,0), (-1,1), (0,-1)$ and it is not reflexive. 
\end{example}

\subsection{Irreducibility}\label{irr}

\begin{definition}\label{def-reducible}
 A generalized nef-partition $\Pi(\Delta)=\{\Delta_1,\dots,\Delta_s\}$ is \emph{reducible} if there exists a proper subset $ A$ of $\{1,\dots,s\}$  such that the Minkowski sum $\sum_{i\in A} \Delta_{i}$ contains $0$ in its relative interior, or equivalently if
 \[
\ell^*\left(\sum_{i\in A} \Delta_{i}\right)=1.
 \]
 Nef-partitions which are not reducible are called \emph{irreducible}.
\end{definition}

\begin{lemma} \label{4.12}
Let $\{1,\dots,s\}=A\cup B$ with $A\cap B=\emptyset$ and $A,B$ not empty.
Let $C_A$ be the set of all non-negative real linear combinations of vertices of $\{\Delta_i\}_{i\in A}$. Define similarly $C_B$. Then $C_A\cap C_B=\{0\}$.

In particular, if the Minkowski sum $\sum_{i\in A}\Delta_i$ contains the origin in its interior and $V_A$ is the linear subspace generated by $\{\Delta_i\}_{i\in A}$, then $V_A\cap C_B=\{0\}$.

\end{lemma}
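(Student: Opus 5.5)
The plan is to pair elements of $C_A$ and $C_B$ against a suitable vertex of $\Delta^\circ$ and exploit the sign pattern of the pairings $\langle m_{i,\sigma},n_\rho\rangle=-\mathbf 1_i(n_\rho)\in\{0,-1\}$. Let $w\in C_A\cap C_B$ and suppose $w\neq 0$. Write
\[
w=\sum_{i\in A}\sum_{v\in{\rm Vert}(\Delta_i)}\lambda_v v=\sum_{j\in B}\sum_{u\in{\rm Vert}(\Delta_j)}\mu_u u,\qquad \lambda_v,\mu_u\ge 0 .
\]
By the definition \eqref{deltai}, every $m\in\Delta_i$ satisfies $\langle m,n_\rho\rangle\ge 0$ for all $n_\rho\notin I_i$. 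Consequently, for every vertex $n_\rho\in I_B:=\bigsqcup_{j\in B}I_j$ we get $\langle w,n_\rho\rangle\ge 0$ from the first expression, since $n_\rho\notin I_i$ for $i\in A$. Symmetrically, $\langle w,n_\rho\rangle\ge 0$ for every $n_\rho\in I_A$ from the second expression. Hence $\langle w,n_\rho\rangle\ge 0$ for every vertex $n_\rho$ of $\Delta^\circ$.

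Since $\Delta^\circ$ contains the origin in its interior, its vertices positively span $N_\rr$. Therefore a linear form that is nonnegative on all of them must vanish, which gives $w=0$ and proves the first claim.

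For the second claim, assume that $\sum_{i\in A}\Delta_i$ contains $0$ in its (relative) interior. Then the cone $C_A$ contains a neighbourhood of $0$ in $V_A$: every point of $V_A$ near $0$ lies in $\sum_{i\in A}\Delta_i$, and that sum is contained in $C_A$ because each $\Delta_i$ is the convex hull of its vertices. Hence $C_A=V_A$. Applying the first part gives $V_A\cap C_B=C_A\cap C_B=\{0\}$.

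The only delicate step is the passage from ``nonnegative on all vertices of $\Delta^\circ$'' to $w=0$. I would justify it by writing $-w$ as a nonnegative combination of the vertices $n_\rho$, which is possible since ${\rm Cone}(\Delta^\circ)=N_\rr$; this yields $\langle w,-w\rangle$-type contradictions only in a Euclidean sense. So it is cleaner to argue dually: if $w\neq0$, pick $n\in N_\rr$ with $\langle w,n\rangle<0$, write $n=\sum\lambda_\rho n_\rho$ with $\lambda_\rho\ge0$, and obtain a contradiction with $\langle w,n_\rho\rangle\ge 0$ for all $\rho$.
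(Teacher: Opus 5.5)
Your proof is correct and is essentially the paper's argument: membership in $C_A$ (resp.\ $C_B$) forces $\langle w,n_\rho\rangle\ge 0$ for $n_\rho\in I_B$ (resp.\ $I_A$), hence for every vertex of $\Delta^\circ$, so $w=0$ because these vertices positively span $N_\rr$; the second claim then follows from $C_A=V_A$, which you justify in more detail than the paper does. The aside about writing $-w$ as a combination of the $n_\rho$ does not make sense, since $w\in M_\rr$ while the $n_\rho$ lie in $N_\rr$, but the dual argument you end up using (choose $n$ with $\langle w,n\rangle<0$) is the right one.
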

\begin{proof}
Assume that $m\in C_A\cap C_B$. Since $m\in C_A$, then by the definition of the $\Delta_i$'s we have that 
$(m,n_{\rho})\geq 0$ for all $n_{\rho}\in \Sigma(1)\backslash \bigcup_{k\in A} I_k$. Similarly, since, $m\in C_B$, then $(m,n_{\rho})\geq 0$ for all $n_{\rho}\in \Sigma(1)\backslash \bigcup_{k\in B} I_k$. Thus $(m,n_{\rho})\geq 0$ for all $n_{\rho}\in \Sigma(1)$. Since $\Delta^\circ$ contains the origin in its interior, this implies $m=0$.

The last assertion follows from the fact that 
if $\sum_{i\in A}\Delta_i$ contains the origin in its interior, then $C_A=V_A$.
\end{proof}
We will now generalize the result in \cite[Thm 5.8]{BB} to generalized nef partitions. The aim of Theorem \ref{thm_irred} is to prove that, for a generalized nef partition, being reducible according to Definition \ref{def-reducible} is equivalent to admit a decomposition into direct sum of irreducible generalized nef partitions.

\begin{definition}\label{directsum}
A generalized nef partition $\Pi(\Delta)=\{\Delta_1,\dots, \Delta_s\}$ is a {\em direct sum of generalized nef partitions} $\Pi(\Delta^{(1)}),\dots,\Pi(\Delta^{(k)})$ if  
$\Delta^{(1)},\dots,\Delta^{(k)}$ are rational
polytopes contained in $\Delta$ whose linear spans $M_1,\dots,M_k$ satisfy $M_{\rr}=M_1\oplus \cdots\oplus M_k$,
$\Delta=\Delta^{(1)}+\cdots+\Delta^{(k)}$
and such that 
\[
\Pi(\Delta^{(i)})=\{\Delta_j: \Delta_j\subseteq \Delta^{(i)}\}
\]
for any $i\in \{1,\dots,k\}$.
\end{definition}

\begin{theorem}\label{thm_irred}
Any generalized nef partition $\Pi(\Delta)$, where $\Delta$ is a lattice polytope, has a unique decomposition into direct sum of irreducible generalized nef-partitions.
\end{theorem}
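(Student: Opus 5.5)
The plan is to follow the strategy of \cite[Thm 5.8]{BB}, replacing reflexivity with Lemma \ref{4.12} (the cone-separation lemma) and the characterization in Proposition \ref{car}. I would argue by induction on $s$. If $\Pi(\Delta)$ is irreducible there is nothing to prove. Otherwise choose a proper subset $A\subset\{1,\dots,s\}$, minimal with respect to inclusion, such that $0$ lies in the relative interior of $\Delta^{(A)}:=\sum_{i\in A}\Delta_i$. Put $B=\{1,\dots,s\}\setminus A$, $\Delta^{(B)}:=\sum_{i\in B}\Delta_i$, and let $V_A,V_B$ be the linear spans.

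The first goal is the splitting $M_\rr=V_A\oplus V_B$. Since $0$ is in the relative interior of $\Delta^{(A)}$, we have $C_A=V_A$, and Lemma \ref{4.12} gives $V_A\cap C_B=\{0\}$. Next I would show that $0$ is also in the relative interior of $\Delta^{(B)}$, so that $C_B=V_B$. To see this, take the interior point $0$ of $\Delta=\Delta^{(A)}+\Delta^{(B)}$ and project along $V_A$. The image of $\Delta^{(B)}$ in $M_\rr/V_A$ then contains $0$ in its interior, and since $V_A\cap C_B=0$, this should force $C_B$ to be a linear subspace. Concretely, if $C_B$ were not a subspace, a supporting functional $\phi\ge0$ on $C_B$, positive somewhere, could be chosen vanishing on $V_A$ by the separation $V_A\cap C_B=0$ (separate the convex cone $C_B$ from $V_A$). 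Then $\phi\ge 0$ on $\Delta$ and $\phi\neq0$, contradicting $0\in\operatorname{int}\Delta$. With both cones being subspaces meeting only in $0$ and summing to $M_\rr$ (because $\Delta$ is full dimensional), we get $M_\rr=V_A\oplus V_B$.

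Next I would check that $\{\Delta_i\}_{i\in A}$ is a generalized nef partition of $\Delta^{(A)}$ inside $V_A$, and likewise for $B$. Under the splitting, $(\Delta^{(A)})^\circ$ in $V_A^*$ is the projection of the face structure of $\Delta^\circ$. The vertices of $\Delta^\circ$ in $\bigcup_{i\in A}I_i$ should be exactly those vanishing on $V_B$, since $\langle m,n_\rho\rangle\ge0$ for $m\in\Delta_j$, $j\in B$, with both signs occurring along the subspace $V_B$. Restricting these vertices gives the partition defining the $\Delta_i$, $i\in A$, via \eqref{deltai}, and condition \eqref{gnp} is inherited from that for $\Delta$. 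The lattice hypothesis enters here and in the induction: $\Delta^{(A)}$ has vertices in $M\cap V_A$ because its vertices are summands of vertices of $\Delta$. The blocks $A$ and $B$ are smaller, so induction gives a decomposition into irreducibles.

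For uniqueness, I would show that any two direct sum decompositions are refined by their common refinement. Given decompositions indexed by subsets $A_1,\dots,A_k$ and $A'_1,\dots,A'_l$, each block $A_i\cap A'_j$ again has $0$ in the relative interior of its sum, because the intersection of the corresponding subspaces is spanned compatibly in a direct sum. Irreducibility then forces the blocks to coincide. I expect the main obstacle to be the step showing that $C_B$ is a subspace, together with verifying that the restricted partitions are generalized nef partitions in the non-reflexive, $\qq$-Cartier setting; this is where the lattice assumption on $\Delta$ must be used carefully.
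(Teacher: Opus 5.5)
Your existence argument is the same as the paper's. The cone lemma gives $V_A\cap C_B=\{0\}$, and separation against $0$ being interior to $\Delta$ gives $C_B=V_B$ and $M_\rr=V_A\oplus V_B$. The vertices of $\Delta^\circ$ then split between the two summands, and one inducts. Your remark that each $n_\rho\in\bigcup_{i\in A}I_i$ is $\geq 0$ on the subspace $V_B$, hence vanishes on it, is exactly what justifies the paper's ``thus'' at that point.

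The step that fails is your claim that $\Delta^{(A)}$ is a lattice polytope because its vertices are summands of vertices of $\Delta$. The lattice $M$ need not split as $(M\cap V_A)\oplus(M\cap V_B)$. For example, let $\Delta=\Conv(\pm(1,0),\pm(0,1))$, so $\Delta^\circ$ is the square with vertices $(\pm1,\pm1)$, and take $I_1=\{(1,1),(-1,-1)\}$, $I_2=\{(1,-1),(-1,1)\}$. Then \eqref{deltai} gives $\Delta_1=\Conv(\pm(\tfrac12,\tfrac12))$ and $\Delta_2=\Conv(\pm(\tfrac12,-\tfrac12))$, with $\Delta_1+\Delta_2=\Delta$. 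So a reflexive polytope has a reducible generalized nef partition whose irreducible summands are not lattice polytopes. Hence an induction whose hypothesis is the theorem for lattice polytopes cannot be applied to $\Pi(\Delta^{(A)})$. The repair is simple: integrality is used nowhere, neither in your argument nor in the paper's. So run the induction over generalized nef partitions of arbitrary rational polytopes with the origin in the interior; this matches Definition~\ref{directsum}, which only asks the summands to be rational.

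The paper's proof does not spell out uniqueness. Your justification for it (``the intersection of the corresponding subspaces is spanned compatibly'') does not prove what is needed. What reducibility requires is that the cone $C_T$, with $T=A_i\cap A'_j$, is a linear subspace. Argue with cones instead:
\begin{itemize}
\item Take $c$ in the cone over $\Delta_{m_0}$ with $m_0\in T$. Since $C_{A_i}=V_{A_i}$, you can write $-c=\sum_{m\in A_i}c_m$ with $c_m$ in the cone over $\Delta_m$.
\item Project onto the summand $V_{A'_j}$ of the second decomposition. Here $-c$ is fixed, and the terms with $m\notin A'_j$ vanish, so $-c\in C_T$.
\item Thus $C_T=-C_T$, and the origin is in the relative interior of $\sum_{m\in T}\Delta_m$.
\end{itemize}
Irreducibility of $\Pi(\Delta^{(A_i)})$ and of $\Pi(\Delta^{(A'_j)})$ then forces $T=\emptyset$ or $T=A_i=A'_j$.
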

    
\begin{proof}
Assume that $\Pi(\Delta)$ is reducible. 
Up to reordering we can assume that  $\Delta':=\Delta_{1}+\cdots+ \Delta_{k}$, $k<s$, contains the origin in its relative interior. 

Let $C'$ be the set of all non-negative real linear combinations of vertices of $\Delta_1,\dots,\Delta_k$ and $V'$ be the linear subspace of $M_{\rr}$ generated by $C'$. Since the origin is in the interior of $\Delta_{1}+\cdots+ \Delta_{k}$, then $C'=V'$.

Let $C''$ be the set of all non-negative real linear combinations of vertices of $\Delta_{k+1},\dots,\Delta_s$ and $V''$ be its linear span.
By Lemma \ref{4.12} $V'\cap C''=\{0\}$.
We now show that $C'$ contains the origin in its interior. If this is not the case, then by the separatness theorem for convex sets, there exists $y\in N_{\rr}$ non-zero such that $(x,y)=0$ for all $x\in V'$ and $(x,y)\geq 0$ for all $x\in C''$. Thus $(x,v)\geq 0$ for any $v\in \Delta_1\cup\cdots \cup\Delta_s$, contradicting the fact that $\Delta$ contains the origin in its interior.
This implies that $C''=V''$ and $M_\rr=V'\oplus V''$. 
Observe that the same argument shows that 
the origin is in the interior of $\Delta'':=\Delta_{k+1}+\cdots+\Delta_{s}$.

Under the isomorphism $N_{\rr}\cong (V')^\vee \oplus (V'')^\vee$  we have that 
$\Delta^\circ={\rm Conv}((\Delta')^\circ,(\Delta'')^\circ)$, in particular
the vertices of $\Delta^\circ$ are either  vertices of $(\Delta')^\circ$ or vertices of $(\Delta'')^\circ$. Thus $I_1\cup\cdots\cup I_k$ is the set of vertices of $(\Delta')^\circ$ and $I_{k+1}\cup\cdots\cup I_s$ is the set of vertices of $(\Delta'')^\circ$. 
It is now easy to check that $I_1\cup\cdots\cup I_k$ defines a generalized nef partition in $V'$ with associated polytopes $\Delta_1,\dots,\Delta_k$ and analogously for $I_{k+1}\cup\cdots\cup I_s$.

The statement thus follows by induction.
\end{proof}

Duality preserves irreducibility, as proven in the following Lemma.

\begin{lemma}\label{dual-irred}
    If $\Pi(\Delta)$ is irreducible then $\Pi(\nabla)$ is irreducible.
\end{lemma}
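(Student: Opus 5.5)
We argue by contraposition: if $\Pi(\nabla)$ is reducible, then $\Pi(\Delta)$ is reducible. Since the dual of $\Pi(\nabla)$ is again $\Pi(\Delta)$ (Proposition \ref{dualgnp}), this is equivalent to showing that reducibility of $\Pi(\Delta)$ forces reducibility of $\Pi(\nabla)$, and we prove it in that form.

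Assume, after reordering, that $\Delta'=\Delta_1+\cdots+\Delta_k$ with $k<s$ contains the origin in its relative interior. The proof of Theorem \ref{thm_irred} then gives a splitting $M_\rr=V'\oplus V''$, where $V'$ is spanned by $\Delta_1,\dots,\Delta_k$ and $V''$ by $\Delta_{k+1},\dots,\Delta_s$. Moreover the origin lies in the relative interior of both $\Delta'$ and $\Delta''=\Delta_{k+1}+\cdots+\Delta_s$. Dually, write $N_\rr=W'\oplus W''$ with $W'=(V'')^\perp\cong (V')^\vee$ and $W''=(V')^\perp$.

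The key step is to show that $\nabla_j\subseteq W'$ for $j\le k$ (and symmetrically $\nabla_j\subseteq W''$ for $j>k$). By Proposition \ref{prop:origin}(i), $\nabla_j={\rm Conv}(0,I_j)$. In the proof of Theorem \ref{thm_irred} the vertices in $I_1\cup\cdots\cup I_k$ are the vertices of $(\Delta')^\circ$, where polarity is taken inside $V'$ and $(\Delta')^\circ$ is viewed in $(V')^\vee=W'$. To make this concrete, take $n\in I_j$ with $j\le k$. For every $i>k$ and every $m\in\Delta_i$ we have $\langle m,n\rangle\ge -{\bf 1}_i(n)=0$. Since $\Delta''$ contains the origin in its relative interior, its points span $V''$ with both signs available, so $\langle\cdot,n\rangle$ vanishes on $V''$. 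Hence $n\in (V'')^\perp=W'$, and therefore $\nabla_j\subseteq W'$.

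Since $\nabla_j\subseteq W'$ for $j\le k$, the sum $\nabla':=\nabla_1+\cdots+\nabla_k$ lies in $W'$, and likewise $\nabla'':=\nabla_{k+1}+\cdots+\nabla_s\subseteq W''$. Because $\nabla=\nabla'+\nabla''$ contains the origin in its interior (Proposition \ref{prop:origin}(iii)) and $N_\rr=W'\oplus W''$, the projection of $\nabla$ onto $W'$ along $W''$ equals $\nabla'$ and contains the origin in its interior relative to $W'$. Thus $\nabla'$ contains the origin in its relative interior, so $\ell^*(\nabla')=1$ with $\{1,\dots,k\}$ a proper subset, and $\Pi(\nabla)$ is reducible by Definition \ref{def-reducible}.

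The main obstacle is the key step $I_j\subseteq W'$. It depends on the fact that the origin lies in the relative interior of the complementary sum $\Delta''$, and not merely of $\Delta'$. That fact is established inside the proof of Theorem \ref{thm_irred} by the separation argument, so it must be cited carefully from there.
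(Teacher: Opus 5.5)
Your proof is correct, but it runs in the opposite direction from the paper's and works by a different mechanism. The paper proves the contrapositive as stated. It starts from a direct-sum splitting $\nabla=\nabla^{(1)}\oplus\nabla^{(2)}$ and defines $\Delta^{(1)},\Delta^{(2)}$ as the polars of $\Conv(\nabla_1,\dots,\nabla_d)$ and $\Conv(\nabla_{d+1},\dots,\nabla_s)$. It then uses $\Delta^\circ=\Conv(\nabla_1,\dots,\nabla_s)$ to verify $\Delta=\Delta^{(1)}+\Delta^{(2)}$ by a polarity computation. You instead use the involution of Proposition \ref{dualgnp} to reverse the implication. You take the splitting $M_\rr=V'\oplus V''$, together with the fact that $0$ is relatively interior to $\Delta''$, from the proof of Theorem \ref{thm_irred}. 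From the defining inequalities of the $\Delta_i$ you show that every ray in $I_j$ annihilates the complementary summand, so $\nabla_j=\Conv(0,I_j)$ lies in $W'$ or $W''$. Projecting $\nabla$ onto $W'$ then gives the relative-interior condition for $\nabla'$. The paper's route is shorter and exhibits the decomposition of $\Delta$ explicitly. However, it leaves implicit the identification of $\Delta^{(1)}$ with $\Delta_1+\cdots+\Delta_d$, which is needed to return to Definition \ref{def-reducible}. Your route checks that definition directly with elementary sign arguments. One point you should state explicitly: because of the involution, you apply the argument of Theorem \ref{thm_irred} to $\Pi(\nabla)$, and $\nabla$ need not be a lattice polytope, while that theorem is stated for lattice polytopes. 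This is harmless, since the splitting part of its proof never uses integrality. It only uses that $\nabla$ and $\nabla^\circ$ contain the origin in their interiors, which is Proposition \ref{prop:origin}(iii); this is also all the lemma giving $C_A\cap C_B=\{0\}$ needs. The paper relies on the same fact implicitly when it starts from a direct-sum splitting of $\Pi(\nabla)$. Finally, your aside $\ell^*(\nabla')=1$ is unnecessary: what you proved is exactly the relative-interior form of Definition \ref{def-reducible}.
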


\begin{proof}
    Assume $\nabla=\nabla^{(1)}\oplus\nabla^{(2)}$ decomposed as the direct sum of two generalized nef partitions, where $\nabla^{(1)}=\nabla_1+\ldots+\nabla_d$ and $\nabla^{(2)}=\nabla_{d+1}+\ldots+\nabla_s$
    for some $d$.
    Each $\nabla^{(i)}$ generates a subspace $M^{(i)}$ and $M^{(1)}\cap M^{(2)}=\{0\}$. 
    
We want to prove that $\Delta$ decomposes as the sum of two  generalized nef partitions.
Let ${\Delta^{(i)}}^\circ\subset M^{(i)}, i=1,2$ be 
$${\Delta^{(1)}}^\circ:={\rm Conv}(\nabla_1,\ldots,\nabla_d),\ {\Delta^{(2)}}^\circ:={\rm Conv}(\nabla_{d+1},\ldots,\nabla_s).$$

By Lemma \ref{lemma-part}, $\Delta^\circ={\rm Conv}(\nabla_1,\ldots\nabla_s)$.
Clearly $ {\Delta^{(i)}}^\circ\subset\Delta^\circ$ and $\Delta\subset \Delta^{(1)}\oplus N^{(2)}$ and 
$\Delta\subset \Delta^{(2)}\oplus N^{(1)}$.
Thus $\Delta\subset\Delta^{(1)}+\Delta^{(2)}$.

For the other inclusion let $\alpha+\beta\in\Delta^{(1)}+\Delta^{(2)}$. We need to prove that $\alpha+\beta\in\left({\Conv}(\nabla_1,\ldots, \nabla_s)\right)^\circ=\bigcap\nabla_i^\circ$.
Given $x\in \nabla_i$, we study $\langle\alpha+\beta,x\rangle$.
If $x\in\nabla^{(1)}$, 
$\langle\alpha,x\rangle\geq -1,$ and $\langle\beta,x\rangle=0$ since $\beta\in(M^{(2)})^\vee$ and $x\in M^{(1)}$.  
Similarly if $x\in\nabla^{(2)}$, 
$\langle\alpha,x\rangle= 0,$ and $\langle\beta,x\rangle\geq -1$. 
\end{proof}

\subsection{Toric geometry of generalized nef partitions}\label{sec-geometry}
In this section we will give a geometric interpretation of 
generalized nef partitions under some extra hypotheses on the polytopes. 
We recall that a lattice polytope in $N_\qq$ containing the origin in its interior is $\mathbb Q$-Fano if its vertices are primitive in $N$. Equivalently, the toric variety defined by the normal fan of its polar is $\qq$-Fano, see \cite[Theorem 6.2.1]{CLS}, \cite[Thm.1.3]{ACG}.

\begin{proposition}\label{geo-gnp}
Let $\Delta$ be a polytope in $M_\qq$ containing the origin in its interior such that $\Delta^\circ$ is $\mathbb Q$-Fano and let $Z=Z_{\Delta}$ be the $\qq$-Fano toric variety defined by the normal fan to $\Delta$.
Giving a generalized nef partition $\Pi(\Delta)=\{\Delta_1,\dots,\Delta_s\}$ of $\Delta$ is equivalent to give a partition $R_1\sqcup \cdots \sqcup R_s$ of the set of $1$-dimensional cones of the fan of $Z$ such that the divisors $N_i:=\sum_{\rho \in R_i} D_{\rho}$
are $\mathbb Q$-Cartier and nef for any $i=1,\dots,s$. Moreover $\Delta_i=\Delta(N_i)$ for any $i$.
\end{proposition}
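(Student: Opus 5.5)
The plan is to translate both sides through the dictionary between vertices of $\Delta^\circ$ and rays of the normal fan $\Sigma$ of $\Delta$. Since $\Delta^\circ$ is $\qq$-Fano, its vertices are primitive lattice points, and they are exactly the minimal generators $n_\rho$ of the rays $\rho\in\Sigma(1)$. Moreover, the maximal cones of $\Sigma$ are the cones over the facets $\sigma$ of $\Delta^\circ$, polar to the vertices of $\Delta$. A partition $I_1\sqcup\cdots\sqcup I_s$ of ${\rm Vert}(\Delta^\circ)$ is therefore the same thing as a partition $R_1\sqcup\cdots\sqcup R_s$ of $\Sigma(1)$. For $N_i=\sum_{\rho\in R_i}D_\rho$ we have $a_\rho={\bf 1}_i(n_\rho)$, so by Notation \ref{not1}
\[
\Delta(N_i)=\{m\in M_\qq:\langle m,n_\rho\rangle\geq -{\bf 1}_i(n_\rho)\ \forall\rho\},
\]
which is exactly $\Delta_i$ as defined in (\ref{deltai}). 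This already gives the final claim $\Delta_i=\Delta(N_i)$. It remains to show that the condition $\Delta=\sum_i\Delta_i$ is equivalent to all $N_i$ being $\qq$-Cartier and nef.

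For this I use the standard toric criterion, working with rational support functions as in \cite{CLS}. A torus-invariant Weil divisor $D=\sum a_\rho D_\rho$ on the complete toric variety $Z$ is $\qq$-Cartier exactly when, for each maximal cone $\sigma$, there is $m_\sigma\in M_\qq$ with $\langle m_\sigma,n_\rho\rangle=-a_\rho$ for all rays $\rho$ of $\sigma$. It is moreover nef exactly when these $m_\sigma$ also satisfy $\langle m_\sigma,n_\rho\rangle\geq -a_\rho$ for all $\rho$, i.e. when $m_\sigma\in\Delta(D)$, in which case the $m_\sigma$ are precisely the vertices of $\Delta(D)$. For $D=N_i$ this says: for every facet $\sigma$ of $\Delta^\circ$ there is a point $m_{i,\sigma}\in\Delta_i$ with $\langle m_{i,\sigma},n_\rho\rangle=-{\bf 1}_i(n_\rho)$ for all $n_\rho\in{\rm Vert}(\sigma)$.

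This is condition (\ref{gnp}) of Proposition \ref{car}(iv), apart from the requirement that $m_{i,\sigma}$ be a vertex. That requirement is automatic. The cone over $\sigma$ is full-dimensional, so the equations $\langle m,n_\rho\rangle=-{\bf 1}_i(n_\rho)$ for $n_\rho\in\sigma$ determine $m$ uniquely. Hence $m_{i,\sigma}$ is cut out by supporting hyperplanes of $\Delta_i$ meeting in a single point, so it is a vertex of $\Delta_i$. Conversely, a vertex satisfying (\ref{gnp}) gives the required local linear function on $\sigma$. Proposition \ref{car}(iv) then identifies this condition with $\Delta=\Delta_1+\cdots+\Delta_s$, which is Definition \ref{gnp2}. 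That proves the equivalence in both directions.

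The main care is in the toric criterion: it must be stated correctly for rational $m_\sigma$ and possibly non-simplicial cones, so that $\qq$-Cartier is encoded by a piecewise linear function on $\Sigma$ and nefness by its convexity. I would cite \cite[Thm.~4.2.8, Thm.~6.1.7]{CLS}, tensored with $\qq$, and check that passing to a multiple $kN_i$ that is Cartier scales everything consistently.
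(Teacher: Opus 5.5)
Your proof is correct and follows the same route as the paper. You identify ${\rm Vert}(\Delta^\circ)$ with the ray generators of the normal fan, read off $\Delta_i=\Delta(N_i)$ from the definitions, and use the toric $\qq$-Cartier/nef criterion (CLS Thm.~6.1.7) to match condition \eqref{gnp} of Proposition \ref{car}(iv). Your extra remark fills in a detail the paper leaves implicit: a point of $\Delta_i$ satisfying the equalities on a full-dimensional cone is automatically a vertex.
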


\begin{proof} The primitive generators of the rays of the fan of $Z$ are exactly the vertices of $\Delta^\circ$, so that $\Delta$ is the polytope associated to the anticanonical divisor $-K_Z$.
If $\Pi(\Delta)$ 
is a generalized nef partition of $\Delta$ associated to  
${\rm Vert}(\Delta^\circ)=I_1\sqcup\ldots\sqcup I_s$, then the condition in Proposition \ref{car}\,(iv)  exactly means that the divisors $N_i$ are $\qq$-Cartier and nef \cite[Theorem 6.1.7 and Lemma 9.2.1]{CLS}.
The last assertion is clear from the definitions of $\Delta_i$ and $\Delta(N_i)$. 
 \end{proof}

\begin{remark}\label{rmk_geometry}
Observe that, to give geometric meaning  as in Proposition \ref{geo-gnp} to a generalized nef partition $\Pi(\Delta)$ and to its dual $\Pi(\nabla)$ one needs both $\Delta^\circ$ and $\nabla^\circ$ to be lattice polytopes. The latter implies that the $\Delta_i$'s are lattice polytopes by Lemma \ref{4.12}.
Thus $\Delta$ should be reflexive and $\Pi(\Delta)$, $\Pi(\nabla)$ are dual nef partitions in the classical sense, see \cite[Def. 4.6]{BB} and \cite[Def. 2.5]{Bor1}.

\end{remark}

\begin{example}[{\bf Continuation of Example \ref{ex-part}}]
Let $\Delta$ be as in Example \ref{ex-part}.  
The vertices of $\Delta^\circ$ are the minimal generators of the rays of the fan of $Z$:
 \[
n_1=(0,-1),\ n_2=(-1,1),\ n_3=(1,0),\ n_4=(0,1).
 \] We already observed that $Z=Z_\Delta$ is the blow up of $\pp^2$ at one point
and that the partition  ${\rm Vert} (\Delta^\circ)=\{n_2, n_4\}\sqcup\{n_1,n_3\}$ defines a generalized nef partition.

Let $D_1,\ldots,D_4$ be the prime toric divisors corresponding to  $n_1,\dots,n_4$ respectively. 
Fixed the natural basis $\{e_0,e_1\}$ of $\Cl(Z)$ ($e_0^2=1, e_1^2=-1, e_0\cdot e_1=1$), the classes of $D_1,\dots, D_4$ are respectively 
$e_0, e_0-e_1, e_0-e_1, e_1$.
The partition $I_1\sqcup I_2$ defines the divisors $D_2+D_4, D_1+D_3$ whose classes $e_0$ and $2e_0-e_1$ are both nef.

As observed, the partition $\{n_2,n_3\}\sqcup\{n_1,n_4\}$ does not give a generalized nef partition and in fact in this case it defines the divisors $D_2+D_3, D_1+D_4$, and one can observe that the class of the divisor $D_1+D_4$ is $e_0+e_1$, which is not nef.

In Table \ref{tab-classes} we show the nef classes for the other generalized nef partitions of Example \ref{ex-part}.
\begin{table}[h]
    \centering
    \begin{tabular}{c|c}
    Partition&classes\\
    \hline\hline
        $\{n_2,n_4\}\sqcup\{n_1,n_3\}$ & $e_0, 2e_0-e_1$ \\\hline
        $\{n_1\}\sqcup\{n_2,n_3,n_4\}$& $e_0, 2e_0-e_1$\\\hline
         $\{n_2\}\sqcup\{n_1,n_3,n_4\}$& $e_0-e_1, 2e_0$\\[3pt]
    \end{tabular}
    \caption{Nef classes}
    \label{tab-classes}
\end{table}

Let $\Pi(\Delta)=\{\Delta_1,\Delta_2\}$ be the nef partition associated to $I_1\sqcup I_2=\{n_2, n_4\}\sqcup\{n_1,n_3\}$. 
By Definition \ref{def-nabla} one gets
$$\nabla_{1} = \text{Conv}((0,0), n_2, n_4)\mbox{ and } \nabla_{2} = \text{Conv}((0,0), n_1, n_3),$$ 
and 
$\nabla = \nabla_{1} + \nabla_{2} = \text{Conv}((-1, 1), (-1, 0), (0, -1), (1, 0), (1, 1))$,
(see Figure \ref{fig:GNP}).
The dual partition $J_1\sqcup J_2$ of vertices of $\nabla^\circ$ is 
\[
J_1\sqcup J_2=\{(0,-1),(1,0)\}\sqcup\{(-1,1),(-1,0), (1,1)\}.
\]

\begin{figure}
    \centering
    \includegraphics[width=0.5\linewidth]{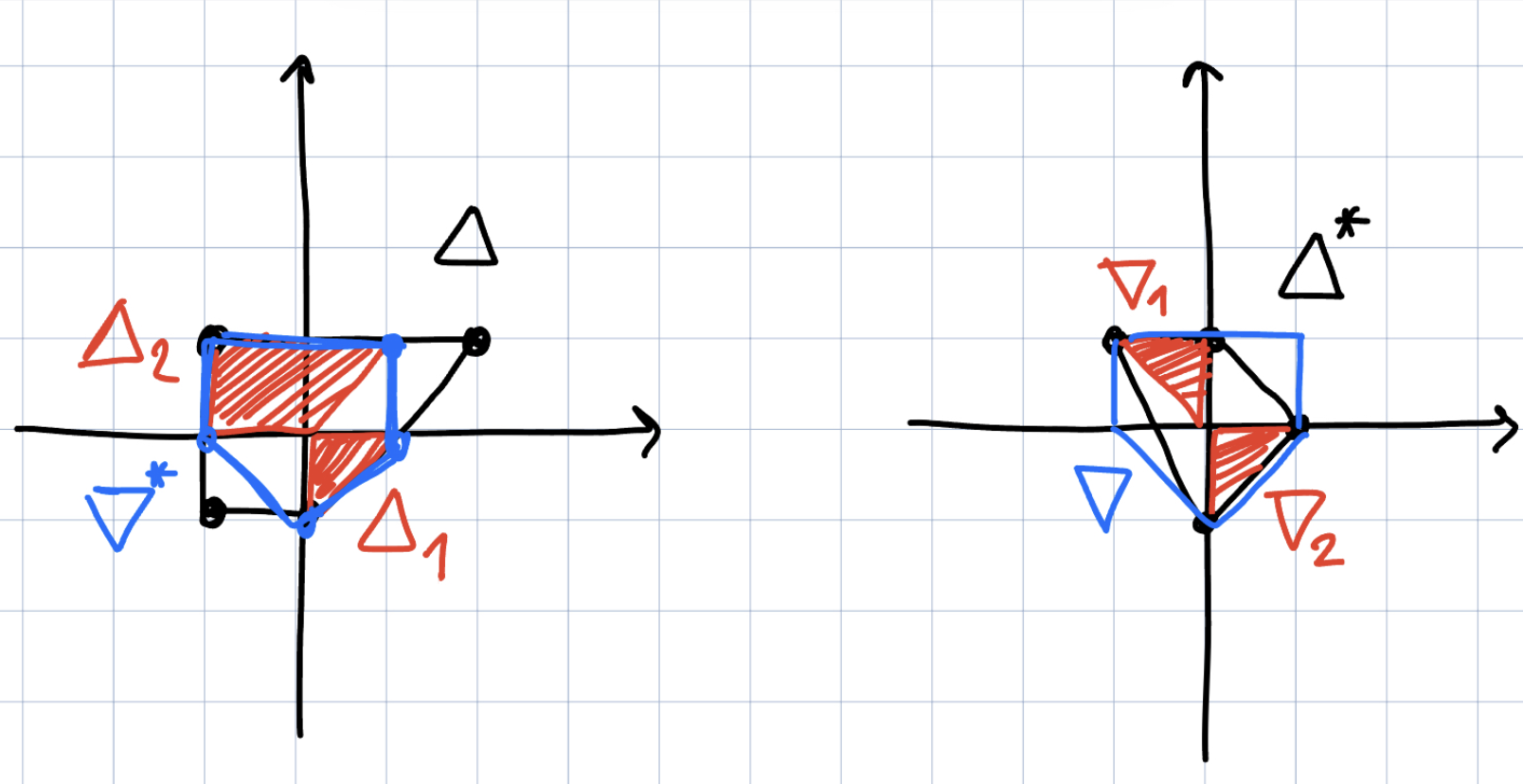}
    \caption{Dual nef partitions $\Pi(\Delta) $ and $\Pi(\nabla)$}
    \label{fig:GNP}
\end{figure}

 In this particular case, $\Delta$ is reflexive so according to Remark \ref{rmk_geometry}, it makes sense to consider the variety associated to the dual partition $\Pi(\nabla)$.
 We first observe that the toric variety $Z^*$ associated to $\nabla$ is the blow up of $\pp(1,1,2)$ at the two smooth torus-fixed points, and the rays of its fan are generated by 
\[
n_1^*=(-1, 1),\ n_2^*=(-1, 0),\ n_3^*=(0, -1),\ n_4^*=(1, 0),\ n_5^*=(1, 1). \] 
 In coordinates, the toric divisors $D_2$ and $D_4$ associated to the rays $n_2^*, n_4^*$ can be assumed to correspond to the two exceptional divisors of the blow-up 
over $(1,0,0)$ and $(0,1,0)$, 
while the divisors corresponding to $n_1^*, n_3^*, n_5^*$ are the proper transforms of the coordinate divisors $y=0$, $z=0$ and $x=0$ of $\mathbb P(1,1,2)$ respectively.
The partition $J_1\sqcup J_2$ defines the divisors $D_3+D_4$ and $D_1+D_2+D_5$. 
The corresponding classes, with respect to the basis $\{e_0,e_1,e_2\}$ of $\Cl(Z^*)$, where $e_0$ is the class of the pull-back of the line $x=0$ and $e_1,e_2$ are the classes of the exceptional divisors,  are $2e_0-e_2, 2e_0-e_1$ which are both nef.   

A similar analysis can be done for the generalized nef partitions associated to $\{n_1\}\sqcup\{n_2,n_3,n_4\}$ and $\{n_2\}\sqcup\{n_1,n_3,n_4\}$ of Example \ref{ex-part}, obtaining the varieties of Table \ref{tab:dual}, where the $p_i$'s denote torus-invariant points and $\iota$ is induced by the toric involution of $\pp^2$ which fixes a line through $p$ (see \cite{2dpolygons}). We denote by  $e_0$ the pull-back of the class of a line in $\pp^2$, $e_i$ the class of the exceptional divisor over the point $p_i$, $e$ is the class of the image of the exceptional divisor over $p$ in the quotient ${\rm Bl}_p\pp^2/\langle \iota\rangle$ and $f$ the class of the image of the proper transform of the line through $p$ and the isolated fixed point of $\iota$.

\begin{table}[h]
    \centering
    \begin{tabular}{c|c|c}
         Partition of  ${\rm Vert}(\Delta^\circ)$ &$Z^*$ &Dual partition  of   ${\rm Vert}(\nabla^\circ)$\\
         and classes of divisor &&and classes of divisor\\
        \hline \hline 
        $\{n_2,n_4\}\sqcup\{n_1,n_3\}$&   ${\rm Bl}_{p_1,p_2}\pp(1,1,2)$  & $J_1=\{(0,-1),(1,0)\}$\\
        
      &  &  $J_2=\{(-1,0), (-1,1),(1,1)\}$ \\
    $e_0, 2e_0-e_1$&&$2e_0-e_2, 2e_0-e_1$ \\
    \hline
        $\{n_1\}\sqcup\{n_2,n_3,n_4\}$&${\rm Bl}_{p_1,p_2,p_3}\pp^2$ & $J_1=\{(0, 1),
        (1, 1),\}$\\
              &  &  $J_2=\{(-1, -1),
        (-1,  0),
        ( 0, -1),  ( 1,  0)\}$ \\
      $e_0, 2e_0-e_1$& &  $e_0-e_1, 2e_0-e_2-e_3$\\
    \hline
        $\{n_2\}\sqcup\{n_1,n_3,n_4\}$& ${\rm Bl}_{p}\pp^2/\langle \iota\rangle$ & $J_1=\{(1,0)\}$\\
              &  &  $J_2=\{(-1, -1),
        (-1,  1),
        ( 1,  1)\}$ \\
     $e_0-e_1, 2e_0$& & $2f, 2(f+e)$  
    \end{tabular}
    \caption{Dual partitions and varieties for all generalized nef partitions of Example \ref{ex-part}}
    \label{tab:dual}
\end{table}
\end{example}

\section{Good pairs and Calabi-Yau complete intersections}\label{sec_gp}

In this section, we will define a notion 
of good pairs of generalized nef partitions and a duality between them which generalizes the duality of nef partitions by Batyrev and Borisov \cite{BB} and the duality of good pairs of polytopes given in \cite{ACG}.

\subsection{Good pairs of generalized nef partitions}
A {\it good pair} of polytopes is a pair of polytopes $(\Delta^1, \Delta^2)$ in $M_{\qq}$ such that $\Delta^1\subseteq\Delta^2$ and both $\Delta^1$ and $(\Delta^2)^\circ$ are canonical. By \cite[Corollary 1.6]{ACG} this is equivalent to say that both $\Delta^1$ and $(\Delta^2)^\circ$ are lattice polytopes containing the origin in their interior.

We give a similar definition when both $\Delta^1$ and $\Delta^2$ admit generalized nef partitions.

\begin{definition}
A {\em good pair of 
generalized nef partitions} 
is a pair of generalized nef partitions $(\Pi(\Delta^1),\Pi(\Delta^2))$, where  $\Pi(\Delta^1)=\{\Delta^1_1,\ldots,\Delta^1_s\}$ and $\Pi(\Delta^2)=\{\Delta^2_1,\ldots,\Delta^2_s\}$, 
such that 
\begin{itemize}
    \item $\Delta^1_i$ is a lattice polytope contained in $\Delta_i^2$ for all $i\in\{1,\ldots,s\}$;
    \item $(\Delta^1,\Delta^2)$ is a good pair of polytopes.
\end{itemize} 
A good pair is called {\em irreducible} if $\Pi(\Delta^1)$ and $\Pi(\Delta^2)$ are irreducible.
\end{definition}
 
We now observe that the duality of generalized nef partitions defined in \S\ref{duality} preserves good pairs.

\begin{proposition}\label{goodpairs}
If $\mathcal P=(\Pi(\Delta^1),\Pi(\Delta^2))$ is a good pair of generalized nef partitions, then $\mathcal P^*:=(\Pi(\nabla^2),\Pi(\nabla^1))$ is a good pair of generalized nef partitions. 
\end{proposition}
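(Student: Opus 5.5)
To prove that $\mathcal P^*$ is a good pair, I will check the two conditions of the definition for $(\Pi(\nabla^2),\Pi(\nabla^1))$: that $\nabla^2_i$ is a lattice polytope contained in $\nabla^1_i$ for every $i$, and that $(\nabla^2,\nabla^1)$ is a good pair of polytopes. By Proposition \ref{dualgnp}, $\Pi(\nabla^1)$ and $\Pi(\nabla^2)$ are already generalized nef partitions of $\nabla^1=\sum_i\nabla^1_i$ and $\nabla^2=\sum_i\nabla^2_i$. Here $\Pi(\nabla^k)$ is the dual of $\Pi(\Delta^k)$, associated to the partition $I^k_1\sqcup\dots\sqcup I^k_s$ of ${\rm Vert}((\Delta^k)^\circ)$.

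\emph{Inclusions and integrality.} Definition \ref{def-nabla} describes $\nabla^k_j$ as the set of $n$ with $\langle m,n\rangle\ge-\delta_{ij}$ for all $m\in\Delta^k_i$ and all $i$. Since $\Delta^1_i\subseteq\Delta^2_i$ for all $i$, the conditions defining $\nabla^2_j$ include those defining $\nabla^1_j$. Hence $\nabla^2_j\subseteq\nabla^1_j$ for each $j$, and summing gives $\nabla^2\subseteq\nabla^1$. By Proposition \ref{prop:origin}(i), $\nabla^2_j={\rm Conv}(0,I^2_j)$, where $I^2_j$ consists of vertices of $(\Delta^2)^\circ$. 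These are lattice points because $(\Delta^2)^\circ$ is canonical, hence a lattice polytope by \cite[Corollary 1.6]{ACG}. So each $\nabla^2_j$ is a lattice polytope.

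\emph{The good pair condition.} By \cite[Corollary 1.6]{ACG}, it suffices to show that $\nabla^2$ and $(\nabla^1)^\circ$ are lattice polytopes containing the origin in their interior. Both contain the origin in the interior by Proposition \ref{prop:origin}(iii). The polytope $\nabla^2$ is a Minkowski sum of the lattice polytopes $\nabla^2_j$, so it is a lattice polytope. For $(\nabla^1)^\circ$, Lemma \ref{lemma-part} gives $(\nabla^1)^\circ={\rm Conv}(\Delta^1_1,\dots,\Delta^1_s)$, and the $\Delta^1_i$ are lattice polytopes by hypothesis, so $(\nabla^1)^\circ$ is a lattice polytope.

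No step here is a real obstacle. The only points needing care are checking that the inclusion $\nabla^2_j\subseteq\nabla^1_j$ runs in the correct direction, and confirming that the characterization in \cite[Corollary 1.6]{ACG} needs only the lattice and interior-origin properties, not canonicity verified directly.
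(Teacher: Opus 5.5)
Your proof is correct and follows essentially the same route as the paper: reduce via \cite[Corollary 1.6]{ACG} to showing that $\nabla^2$ and $(\nabla^1)^\circ$ are lattice polytopes with the origin in their interior, then use Proposition \ref{prop:origin}(i),(iii) for $\nabla^2$ and Lemma \ref{lemma-part} for $(\nabla^1)^\circ$. The only difference is that you explicitly check the componentwise inclusions $\nabla^2_j\subseteq\nabla^1_j$ and the integrality of each $\nabla^2_j$, which the paper's proof leaves implicit.
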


\begin{proof}
By \cite[Corollary 1.6]{ACG}, it is enough to prove that $\nabla^2$ and $(\nabla^1)^\circ$ are lattice polytopes with the origin as an interior point. By Proposition \ref{prop:origin}, part (iii), we just have to see that these polytopes are lattice polytopes. First, since $(\Delta^2)^\circ$ is a lattice polytope, the same holds for the $\nabla_{i}^2$'s by Proposition \ref{prop:origin}, part (i), and hence $\nabla^2 = \nabla^2_{1} + \ldots + \nabla^2_{s}$ is a lattice polytope. Secondly, since the $\Delta^1_{i}$'s are lattice polytopes, by Lemma \ref{lemma-part}, the same holds for $(\nabla^1)^\circ$. 
\end{proof}

\begin{definition}\label{dualgp}
We will say that $\mathcal P=(\Pi(\Delta^1),\Pi(\Delta^2))$ and  $\mathcal P^*:=(\Pi(\nabla^2),\Pi(\nabla^1))$ are {\em dual good pairs of generalized nef partitions}.
\end{definition}
Clearly $(\mathcal P^*)^*=\mathcal P$, thus we have defined an involution 
in the set of good pairs of generalized nef partitions.

\begin{example}\label{ex-nestedpart}
Let $\Delta^2 := \Delta$ be as in Example \ref{ex-part} 
with the partition of $\text{Vert}((\Delta^2)^\circ)$ given by $I_1 = \{(-1,1),(0,1)\}$, $I_2 = \{(0,-1),(1,0)\}$. 
In Example \ref{ex-part} we computed the polytopes $\Delta_1^2,\Delta_2^2$ and $\nabla_1^2, \nabla_2^2$ such that $\Delta_1^2+\Delta_2^2=\Delta^2$ and 
$\nabla^2=\nabla_1^2+\nabla_2^2$.

Now let us define the polytopes $\Delta_{2}^1 = \rm{Conv}((0,0), (-1, 0), (0, 1))$ contained in $\Delta_{2}^2$, and  $\Delta_{1}^1 = \text{Conv}((0,0), (0, -1),(1, 0))$, which coincides with $\Delta_{1}^2$, as in Figure \ref{fig:nested}. The polytope $\Delta^1=\Delta^1_1+\Delta^1_2$ has vertices $\{(1,0),(0,1),(-1,0),(0,-1)\}$ and $\{\Delta^1_1,\Delta^1_2\}$ is an irreducible   generalized nef partition of $\Delta^1$.
Note that the vertices of $\Delta^1$ are the minimal generators of the fan of $Z^* = X_{\nabla^1} = \mathbb{P}^1 \times \mathbb{P}^1$. 
The corresponding $\nabla_{i}^1$'s are:
\begin{equation}\label{eqnabla}
\nabla_{1}^1 = \text{Conv}(0, (-1,1), (-1,0), (0,1)),\  \nabla_{2}^1 = \text{Conv}(0, (0,-1), (1,-1), (1,0)).
\end{equation}
The pairs $(\Pi(\Delta^1),\Pi(\Delta^2))$ and $(\Pi(\nabla^2),\Pi(\nabla^1))$ are good pairs of generalized nef partitions.

\begin{figure}
    \centering
    \includegraphics[width=0.5\linewidth]{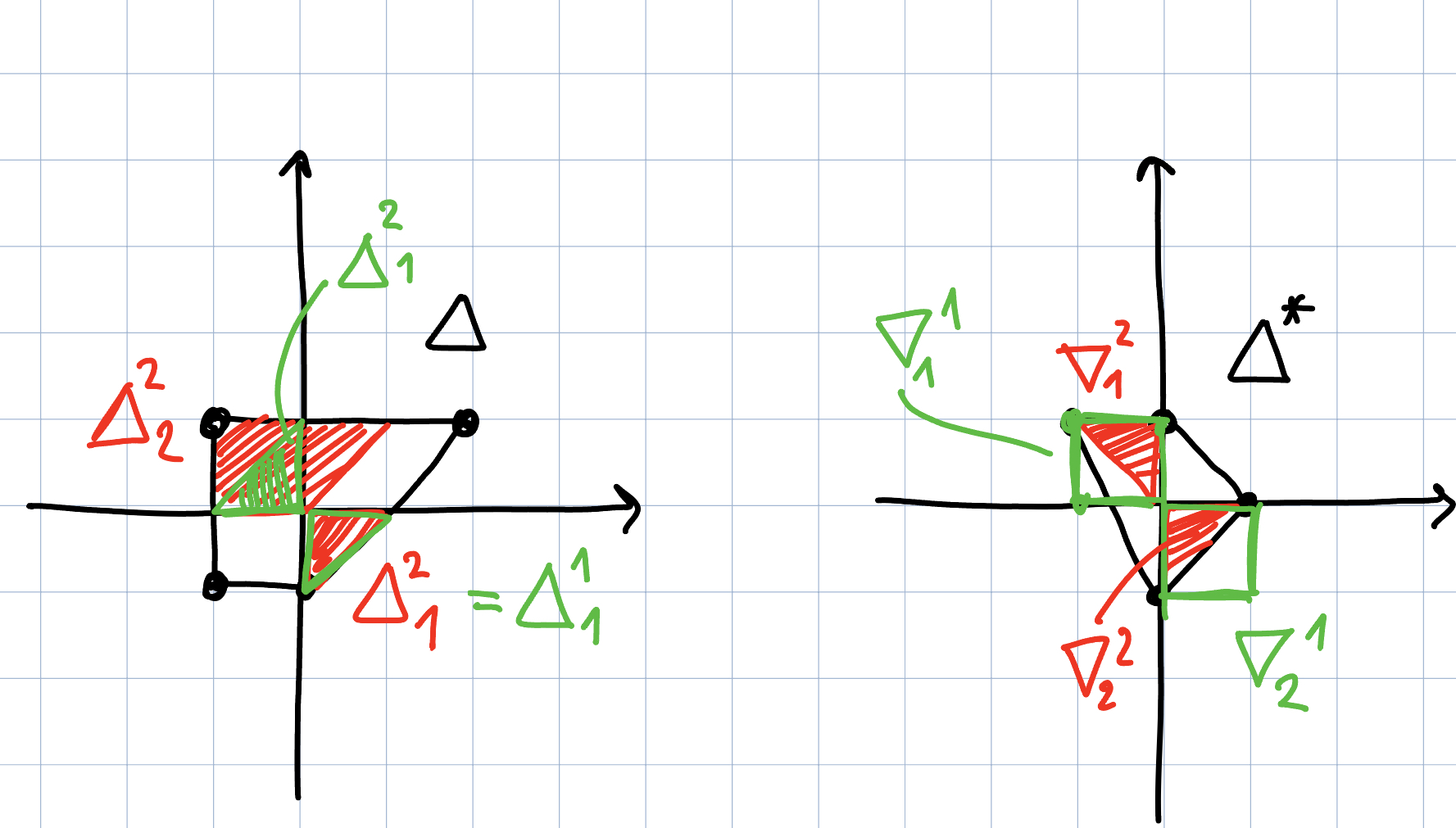}
    \caption{The pair $(\Delta^1,\Delta^2)$}
    \label{fig:nested}
\end{figure}
\end{example}

\subsection{Calabi-Yau varieties associated to good pairs}\label{sec-CY}

In this paper by {\em Calabi-Yau variety} we mean an $n$-dimensional normal projective variety $X$
with canonical singularities, such that $K_X\sim 0$ and $h^i(X,\mathcal O_X) = 0$ for $0 < i < n$.

Let $Z=Z_{\Sigma}$ be a projective toric variety of dimension $n$ and let $X$ be a subvariety, defined by $g_1,\dots,g_s\in \mathcal R(Z)$  
of degrees $d_1,\ldots,d_s\in\Cl(Z)$. 
Let $\Delta_1,\dots,\Delta_s\subseteq M_{\qq}$ be the Newton polytopes of the $\Sigma$-dehomogenizations $f_1,\dots,f_s$ of $g_1,\dots,g_s$.
We will now consider the following assumptions.

\begin{assumption}\label{hypothesis}
\ \\
\vspace{-0.4cm}

\begin{itemize}
\item   $X$ is a quasismooth complete intersection as in Definition \ref{qsci};
\item $g_i$ are general in the sense of \S\ref{2.4};
\item $\sum_i d_i=[-K_Z]$; 
\item 
$\dim(\Delta_i)>0$, 
$n=\dim(\Delta_1+\ldots+\Delta_s)$, $\ell^*(\Delta_1+\ldots+\Delta_s)=1$ and for any proper subset $\{k_1,\ldots,k_a\}\subset\{1,\ldots,s\}$ one has $\ell^*(\Delta_{k_1}+\ldots+\Delta_{k_a})=0$.
\end{itemize}
\end{assumption}

\begin{theorem}\label{thmCY} 
Let $Z$ be a projective toric variety of dimension $n$ 
and let $X\subseteq Z$ be a quasismooth complete intersection, defined by homogeneous elements $g_1,\dots,g_s\in \mathcal R(Z)$  
of degrees $d_1,\ldots,d_s\in\Cl(Z)$ 
satisfying Assumption 1 and with $n\geq s+1$.
Then $X$ is Calabi-Yau variety.
\end{theorem}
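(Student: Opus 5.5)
We prove the four defining properties of a Calabi-Yau variety in turn: normality, $K_X\sim 0$, vanishing of $h^i(X,\mathcal O_X)$ for $0<i<\dim X$, and canonical singularities. Here $\dim X=n-s\ge 1$.

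\emph{Normality and the quotient structure.} Since $X$ is a quasismooth complete intersection, $\hat X$ is a smooth complete intersection in $\hat Z$ of codimension $s$. The affine cone $\overline X\subseteq \bar Z$ is a complete intersection, hence Cohen--Macaulay. We check that $\overline X\setminus\hat X$ has codimension at least two in $\overline X$, using well-formedness and the fact that the $g_i$ are general with $\dim\Delta_i>0$. The lemma after Definition \ref{qsci} then shows that $\overline X$ is normal and irreducible and that $X=\overline{V(f_1,\dots,f_s)}$. Since $X=\hat X/H$ is a good quotient of a smooth variety by a diagonalizable group, $X$ is normal with quotient singularities. In particular $X$ is $\qq$-factorial and has rational singularities.

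\emph{Triviality of $K_X$.} Adjunction on the smooth $\hat X$, made $H$-equivariant and descended to the quotient, gives
\[
K_X=\Big(K_Z+\sum_{i=1}^s d_i\Big)\Big|_X
\]
as Weil divisor classes, using that $X$ is well-formed so that the codimension-one singular strata of $Z$ do not meet $X$ badly. The hypothesis $\sum_i d_i=[-K_Z]$ then gives $K_X\sim 0$. If one prefers to avoid $H$-equivariance, the same formula follows by working on the smooth locus $X^{\rm reg}\cap Z^{\rm reg}$, whose complement in $X$ has codimension at least two by well-formedness.

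\emph{Vanishing of $h^i(\mathcal O_X)$.} The cohomology of $\mathcal O_X$ is computed from the Koszul resolution of $\mathcal O_X$ by the sheaves $\mathcal O_Z(-\sum_{i\in A}d_i)$, $A\subseteq\{1,\dots,s\}$; this resolution is exact because $X$ is a quasismooth complete intersection. One then needs $H^j(Z,\mathcal O_Z(-\sum_{i\in A}d_i))=0$ for the relevant indices. By Serre duality on the toric variety, $H^j(Z,\mathcal O_Z(-\sum_{i\in A}d_i))$ is dual to $H^{n-j}(Z,\mathcal O_Z(K_Z+\sum_{i\in A}d_i))$. By Batyrev--Borisov style vanishing for nef $\qq$-Cartier divisors (equivalently, counting interior lattice points via Mavlyutov/Khovanskii), the only possibly nonzero groups are top cohomology, of dimension $\ell^*(\sum_{i\in A}\Delta_i)$. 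The assumption on $\ell^*$ kills every proper subset $A$ and leaves a one-dimensional contribution for $A=\{1,\dots,s\}$. The spectral sequence then yields $h^0(\mathcal O_X)=1$, $h^{n-s}(\mathcal O_X)=1$, and $h^i(\mathcal O_X)=0$ otherwise. This is Khovanskii's computation \cite{Khov} for non-degenerate systems, which we may invoke since $X$ is non-degenerate. The hypothesis $n\ge s+1$ is used here and for irreducibility.

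\emph{Canonical singularities (main obstacle).} Since $X$ has quotient singularities and $K_X$ is Cartier (being trivial), it suffices to show that the local groups contain no quasi-reflections acting nontrivially on $\omega$, i.e.\ that $X$ has Gorenstein quotient singularities. Gorenstein quotient singularities are canonical by Reid--Tai. We expect the subtle point to be checking that the isotropy groups act through $\mathrm{SL}$ on $T\hat X$ transverse to the orbit. We would deduce this from $K_X\sim0$ being Cartier together with well-formedness, which rules out codimension-one ramification; Watanabe's criterion then applies.
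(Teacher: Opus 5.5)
There is a genuine gap in your vanishing of $h^i(\mathcal O_X)$. Your Koszul resolution lives on $Z$, so the groups you need are $H^j(Z,\mathcal O_Z(-\sum_{i\in A}d_i))$. These depend only on the classes $d_i$, that is, on the polytopes $\Delta(D)$ of torus-invariant representatives $D$ of $\sum_{i\in A}d_i$. They do not depend on the Newton polytopes $\Delta_i$ of the $f_i$.

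Assumption~\ref{hypothesis} only controls $\ell^*$ of sums of the Newton polytopes, and these can be strictly smaller than the divisor polytopes. This is exactly the situation the paper is built for: in Theorem~\ref{CY} one has $\Delta_i=\Delta_i^1\subsetneq\Delta_i^2$. Worse, the statement does not assume that the $d_i$ or their partial sums are nef or even $\qq$-Cartier. So Batyrev--Borisov vanishing need not apply, and $\mathcal O_Z(-\sum_{i\in A}d_i)$ may have intermediate cohomology.

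Your fallback to Khovanskii does not repair this. His computation concerns the closure of $X\cap T$ in a toric variety adapted to $\Delta=\sum_i\Delta_i$, not $X\subseteq Z$. The missing idea is the paper's transfer step:
\begin{enumerate}
\item Take the closure $\bar X_T$ of $X\cap T$ in $\mathbb P_\Delta$. There the $\Delta_i$ are exactly the polytopes of base point free divisors, so \cite[Corollary 3.5]{BB} gives $h^i(\mathcal O_{\bar X_T})=0$ for $0<i<n-s$.
\item Quasismoothness of $\bar X_T$ gives it rational singularities.
\item $X$ and $\bar X_T$ are birational projective varieties with rational singularities, so Proposition~\ref{hi} identifies their $h^i(\mathcal O)$.
\end{enumerate}
Your Koszul argument is only adequate in the classical setting, where the $d_i$ are nef $\qq$-Cartier and each $\Delta_i$ is the full polytope of $d_i$.

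A second, repairable error: you assert that $X$ has quotient singularities and is $\qq$-factorial. This fails when $Z$ is not simplicial, which the statement allows (and $Z_{\Delta^2}$ need not be simplicial). Over non-simplicial cones the stabilizers of $H$ on $\hat Z$ are positive-dimensional. There $X$ is locally a torus quotient, for instance a cone over a quadric surface, which is neither $\qq$-factorial nor a quotient singularity. So the Reid--Tai/Watanabe route to canonicity is not available in general.

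It is also unnecessary. Boutot's theorem gives rational singularities for the good quotient $\hat X\to X$ directly, and a normal variety with $K_X$ Cartier and rational singularities is canonical. This is how the paper argues, and the ``main obstacle'' you flag disappears. Likewise, the codimension-two check on $\overline X\setminus\hat X$ is not needed: normality of $X$ already follows from $\hat X$ being smooth and $\hat X\to X$ a good quotient. Your adjunction step on $X\cap Z^{\rm reg}$ agrees with the paper.
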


\begin{proof}
Let $p:\hat Z\to Z$ be the characteristic morphism of $Z$, which is a good quotient by the action of  $H={\rm Spec}\ \cc[\Cl(Z)]$, and let $\hat X=p^{-1}(X)$.
Since $X$ is quasismooth, then $\hat X$ is smooth, in particular it is normal.  
Moreover $p_{|\hat X} : \hat X \to X$ is a good quotient for the induced action of $H$. Since $\hat X$ is normal, it follows that $X$ is normal (see for example \cite[Lemma 5.0.4]{CLS}).

The intersection of $X$ with the smooth locus of $Z$ is smooth by the quasismothness hypothesis.
Applying the classical adjunction formula to this 
intersection we obtain that $K_X\sim 0$ by Assumption \ref{hypothesis}. In particular $X$ has Gorenstein singularities. Moreover, since $\hat X$ is smooth, the singularities of $X$ are rational \cite[Corollaire]{Bo}. By \cite[Corollary 11.13]{Ko2} Gorenstein rational singularities are canonical.

We now prove that $h^{i,0}(X,\Osh_X)=0$ for $0<i<\dim(X)$.
Let $X_T$ be intersection of $X$ with the torus $T\subset Z$. 
Let $\overline{X}_T$ be the intersection of the closures of the hypersurfaces defining $X_T$ in the toric variety $\mathbb P_{\Delta}$ defined by the normal fan to   $\Delta=\Delta_1+\dots+\Delta_s$. 
By \cite[Corollary 3.5]{BB} $\bar X_T$
is a non-empty irreducible variety of dimension $n-s\geq 1$ with $h^i(\mathcal O_{\bar X_T})=0$ for $i=1,\dots, n-s-1$. 
Observe that $\overline X_T$ is quasismooth in $\mathbb P_{\Delta}$ since the $\Delta_i$'s are supporting polytopes of base point free divisors by \cite[Proposition 2.4]{BB}. Thus $\bar X_T$ has rational singularities.
Moreover $X$ and $\bar X_T$ are birational, since 
$X\cap T=\bar X_T\cap T$.
The result now follows from  Proposition \ref{hi}.
\end{proof}

\begin{proposition}\label{hi}
Let $X,X'$ be two birational complex projective varieties with rational singularities. Then $H^i(X,\mathcal O_X)\cong H^i(X',\mathcal O_{X'})$ for any $i$.
\end{proposition}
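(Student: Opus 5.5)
The plan is to reduce to a common resolution of singularities. Since $X$ and $X'$ are birational complex projective varieties, we may choose a smooth projective variety $\tilde X$ together with birational morphisms $\pi:\tilde X\to X$ and $\pi':\tilde X\to X'$. To build it, take the closure $\Gamma$ of the graph of a birational map $X\dashrightarrow X'$ inside $X\times X'$; this is projective, and both projections from $\Gamma$ are birational. Then apply Hironaka's resolution of singularities to $\Gamma$. The key point is that $\tilde X$ is smooth, so each of $\pi,\pi'$ is a resolution of singularities of its target.

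Next I will use the definition of rational singularities. A variety $Y$ has rational singularities if, for one (equivalently, every) resolution $f:\tilde Y\to Y$, we have $f_*\mathcal O_{\tilde Y}=\mathcal O_Y$ and $R^jf_*\mathcal O_{\tilde Y}=0$ for $j>0$. The independence from the choice of resolution is standard and uses that $R^jg_*\mathcal O=0$ for birational morphisms between smooth varieties. Applying this to $\pi$ and $\pi'$, the Leray spectral sequence
\[
E_2^{p,q}=H^p(X,R^q\pi_*\mathcal O_{\tilde X})\Rightarrow H^{p+q}(\tilde X,\mathcal O_{\tilde X})
\]
degenerates, because only the row $q=0$ is nonzero and it equals $H^p(X,\mathcal O_X)$. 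This gives $H^i(X,\mathcal O_X)\cong H^i(\tilde X,\mathcal O_{\tilde X})$, and the same argument gives $H^i(X',\mathcal O_{X'})\cong H^i(\tilde X,\mathcal O_{\tilde X})$. Composing the two isomorphisms proves the statement.

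The only delicate point is that $\tilde X$ must map to both varieties by genuine morphisms that are resolutions. The graph-closure construction ensures this, and resolving $\Gamma$ does not affect it, since composing with a further birational proper morphism from a smooth variety is again a resolution. Alternatively, one could cite directly the fact that $H^i(\mathcal O)$ is a birational invariant of smooth projective varieties, because $h^{0,i}$ is a birational invariant via Hodge symmetry $h^{i,0}=h^{0,i}$ and the birational invariance of holomorphic forms. In that version, the rational singularities hypothesis is used only to pass from $X$ and $X'$ to their resolutions.
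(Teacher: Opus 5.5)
Your argument is correct, but its main line is not the paper's. The paper takes two unrelated resolutions $\pi:\tilde X\to X$ and $\pi':\tilde X'\to X'$ with vanishing higher direct images, as the definition of rational singularities provides. It uses the Leray degeneration (Hartshorne's Exercise III.8.1) to get $h^i(X,\mathcal O_X)=h^i(\tilde X,\mathcal O_{\tilde X})$ and $h^i(X',\mathcal O_{X'})=h^i(\tilde X',\mathcal O_{\tilde X'})$. It then connects the two smooth models using the birational invariance of $h^{0,i}$ for smooth projective varieties (Griffiths--Harris), which is exactly your ``alternative version''.

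Your main route instead dominates both $X$ and $X'$ by a single resolution of the graph closure. Everything is then compared on one smooth variety, and no birational invariance for smooth varieties is needed. The cost is that you must use the stronger form of the definition: the vanishing $R^j\pi_*\mathcal O_{\tilde X}=0$ must hold for \emph{every} resolution, not just one. That fact rests on $R^jg_*\mathcal O=0$ for proper birational morphisms between smooth varieties (a Grauert--Riemenschneider type statement).

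In exchange, you get isomorphisms that are natural (induced by pullback to $\tilde X$) rather than only an equality of dimensions. Your argument is also purely algebraic, with no appeal to Hodge symmetry. The two key inputs are of comparable depth. The relative vanishing, combined with your common-resolution construction, itself recovers the birational invariance of $H^i(\mathcal O)$ for smooth projective varieties that the paper cites.
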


\begin{proof}
Since $X$ and $X'$ have rational singularities there exist two birational morphisms $\pi:\tilde X\to X$ and $\pi':\tilde{X'}\to X'$, where $\tilde X,\tilde X'$ are smooth and such that $R^i\pi_*\mathcal O_{\tilde X}=R^i\pi'_*\mathcal O_{\tilde X'}=0$ for $i>0$.
By \cite[Exercise III. 8.1]{Ha} we have that 
$h^i(X,\mathcal O_X)= h^i(\tilde X,\mathcal O_{\tilde X})$ and $h^i(X',\mathcal O_{X'})= h^i(\tilde X',\mathcal O_{\tilde X'})$. 

Since $\tilde X$ and $\tilde X'$ are smooth birational projective varieties, then 
$h^i(\tilde X,\mathcal O_{X})=h^i(\tilde X',\mathcal O_{\tilde X'})$ by \cite[Section 4.2, p. 494]{GH}.
\end{proof}

We will now show that a good pair $\mathcal P$ of irreducible generalized nef partitions, under an extra assumption, defines a family $\mathcal F_\mathcal P$ of Calabi-Yau varieties embedded 
in a $\mathbb Q$-Fano toric variety.

Consider a good pair of  
generalized nef partitions
 $\mathcal P=(\Pi(\Delta^1),\Pi(\Delta^2))$, where  $\Pi(\Delta^1)=\{\Delta^1_1,\ldots,\Delta^1_s\}$ and $\Pi(\Delta^2)=\{\Delta^2_1,\ldots,\Delta^2_s\}$.
Let $Z=Z_{\Delta^2}$ be the toric variety defined by the normal fan to $\Delta^2$. Since $(\Delta^1,\Delta^2)$ is a good pair, the polytope $(\Delta^2)^\circ$  is canonical, thus $Z$ is $\qq$-Fano with canonical singularities. 
The generalized nef partition $\Pi(\Delta^2)$ provides a decomposition 
$-K_Z=N_1+\cdots+N_s$, where $N_i$ are toric $\qq$-Cartier nef divisors with associated polytopes $\Delta^2_1,\dots,\Delta_s^2$. 
For each $i$, the polytope $\Delta^1_i \subseteq\Delta_i^2$  identifies a subspace $\mathcal F(\Delta^1_i)$ of the complete linear system $|N_i|$ on $Z$ (see section \S\ref{subsec-polytopes}). 
Let $X$ be the subvariety of $Z$ defined by $s$ general divisors in $\mathcal F(\Delta^1_1),\dots, \mathcal F(\Delta^1_s)$.
The good pair $\mathcal P$ is defined to be {\em quasismooth} if $X$ is a quasismooth complete intersection as in Definition \ref{qsci}. 
All conditions in Assumption 1 are satisfied by $X$ because of the generality, quasismoothness and irreducibility hypotheses. Thus we obtain the following result.

\begin{theorem}\label{CY}
An irreducible and quasismooth good pair of 
generalized nef partitions $\mathcal P=(\Pi(\Delta^1),\Pi(\Delta^2))$ 
 defines a family $\mathcal F_{\mathcal P}$ of Calabi-Yau varieties  in the $\qq$-Fano toric variety $Z_{\Delta^2}$.
\end{theorem}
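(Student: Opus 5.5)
The proof reduces Theorem \ref{CY} to Theorem \ref{thmCY}. Let $X$ be the subvariety of $Z=Z_{\Delta^2}$ defined by general $g_1,\dots,g_s$ with $g_i\in\mathcal F(\Delta^1_i)$. We check the four items of Assumption \ref{hypothesis} one at a time. By the definition of a quasismooth good pair, $X$ is a quasismooth complete intersection. Generality holds because the $g_i$ are chosen as general elements of the spans $\mathcal V(g_i)$. For the degrees, Proposition \ref{geo-gnp} applies to $\Pi(\Delta^2)$: $(\Delta^2)^\circ$ is a canonical lattice polytope, hence $\qq$-Fano, since its vertices are primitive (a non-primitive vertex $v$ would make $v/k$ an interior point). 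This gives $-K_Z=N_1+\dots+N_s$ with $\Delta(N_i)=\Delta^2_i$, and $g_i$ has degree $[N_i]$. Therefore $\sum_i d_i=[-K_Z]$.

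The core of the argument is the last item. For general $g_i$, every lattice point of $\Delta^1_i$ occurs with nonzero coefficient. So the Newton polytope of the dehomogenization of $g_i$ with respect to $N_i$ is exactly $\Delta^1_i$; any other choice of dehomogenization only translates it by a lattice vector, which changes neither $\ell^*$ of Minkowski sums (up to the same translation) nor dimensions.

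Since $(\Delta^1,\Delta^2)$ is a good pair, $\Delta^1=\sum_i\Delta^1_i$ is canonical. It is therefore full dimensional, so its dimension is $n$, and its only interior lattice point is the origin, so $\ell^*(\Delta^1)=1$.

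Now take a proper subset $A\subset\{1,\dots,s\}$. Irreducibility of $\Pi(\Delta^1)$ (Definition \ref{def-reducible}) says $0$ is not in the relative interior of $\sum_{i\in A}\Delta^1_i$. We must upgrade this to the absence of \emph{any} relative interior lattice point. Suppose $m\ne 0$ is a lattice point in the relative interior of $\Delta_A:=\sum_{i\in A}\Delta^1_i$. The origin lies in every $\Delta^1_j$ by Proposition \ref{car}(i), so $\Delta_A\subseteq\Delta^1$. Moreover, $\Delta_A$ sits inside $C_A$, the cone of Lemma \ref{4.12}, which meets $C_B$ only in $0$. The plan is to show that $m$ would then be an interior lattice point of $\Delta^1$ different from $0$, which is a contradiction. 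One way: add a small multiple of points of $\sum_{j\notin A}\Delta^1_j$ and use that $\Delta^1$ has $0$ in its interior, to see that $m$ is interior in $\Delta^1$. Alternatively, use the supporting hyperplane description: a facet-defining functional $n_\rho$ of $\Delta^1$ that is minimized at $m$ forces $\langle m,n_\rho\rangle=\langle\text{face of }\Delta_A\rangle$. I expect this step to be the main obstacle. If it fails in this form, I will use the equality $\ell^*(\Delta_A)=1 \iff 0\in\mathrm{relint}$ asserted in Definition \ref{def-reducible} together with the analogous fact that any relative interior lattice point of $\Delta_A$ must be $0$. That fact follows because the facet inequalities $\langle m,n_\rho\rangle\ge -\mathbf 1_{A}(n_\rho)$ inherited from $\Pi(\Delta^1)$ are integral.

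Positivity of $\dim\Delta^1_i$ also follows from irreducibility: if $\Delta^1_i=\{0\}$, then for $s\ge 2$ the subset $A=\{i\}$ has $\ell^*=1$. Assumption \ref{hypothesis} therefore holds.

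Finally, $n\ge s+1$: by Definition \ref{qsci}, $\dim X=n-s$, and a nonempty Calabi-Yau family requires $n-s\ge1$. This also follows from the dimension count in \cite[Corollary 3.5]{BB}, since each $\Delta^1_i$ is positive dimensional and no proper subsum contains interior points. Theorem \ref{thmCY} then shows that every member of $\mathcal F_{\mathcal P}$ is a Calabi-Yau variety in $Z_{\Delta^2}$.
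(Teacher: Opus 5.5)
Your reduction is the paper's own. The paper simply asserts that generality, quasismoothness and irreducibility give Assumption~\ref{hypothesis} and then applies Theorem~\ref{thmCY}. Your checks of the first three items, of $\dim\Delta^1=n$, of $\ell^*(\Delta^1)=1$ and of $\dim\Delta^1_i>0$ are fine.

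The one nontrivial item, however, is not actually proved in your proposal. Irreducibility only gives $0\notin\operatorname{relint}\Delta_A$, and your fallback for upgrading this is wrong. The functionals $n_\rho$ defining $\Pi(\Delta^1)$ are vertices of $(\Delta^1)^\circ$. Because $\Delta^1$ is only canonical, these need not be lattice points (compare the vertex $(0,-\frac12,\frac12)$ in Remark~\ref{rem-gnp}), so $\langle m,n_\rho\rangle$ need not be an integer. That integrality argument is the reflexive one, and it is exactly what is unavailable here. The cones $C_A,C_B$ play no role, and the supporting-hyperplane route is too vague to check. Your first route is the right one, but adding points of $\Delta_B:=\sum_{j\notin A}\Delta^1_j$ alone moves you away from $m$. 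You have to compensate inside $\Delta_A$.

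Concretely, relative interiors are additive under Minkowski sums, and $\Delta^1=\Delta_A+\Delta_B$ is full-dimensional, so $\operatorname{int}\Delta^1=\operatorname{relint}\Delta_A+\operatorname{relint}\Delta_B$. Since $0\in\operatorname{int}\Delta^1$, write $0=a+b$ with $a\in\operatorname{relint}\Delta_A$ and $b\in\operatorname{relint}\Delta_B$. Take $m\in\operatorname{relint}\Delta_A$ and a small $t\in(0,1]$. Then $m+ta\in\operatorname{relint}\Delta_A$: it lies in the linear span of $\Delta_A$, which is its affine hull because $0\in\Delta_A$, and it is close to $m$. Also $tb\in\operatorname{relint}\Delta_B$, since $0\in\Delta_B$. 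Hence $m=(m+ta)+tb\in\operatorname{int}\Delta^1$, so a lattice point $m$ must be $0$ by canonicity of $\Delta^1$. Thus $\ell^*(\Delta_A)\le 1$, with equality iff $0\in\operatorname{relint}\Delta_A$; this also justifies the ``equivalently'' in Definition~\ref{def-reducible}. Irreducibility then yields $\ell^*(\Delta_A)=0$.

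Separately, your justification of $n\ge s+1$ does not hold up. Appealing to the Calabi--Yau conclusion is circular. Nor do the $\ell^*$ conditions force it: they hold with $n=s=2$ in Example~\ref{ex_matrix}, where $X$ is a pair of points. What actually settles it is that Definition~\ref{qsci} requires $X$ to be irreducible of dimension $n-s$. This excludes $n<s$ and leaves only the degenerate case $n=s$, where $X$ is a single point and is trivially Calabi--Yau. The paper also passes over this case silently.
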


\begin{notation}\label{coef}
    In order to lighten the notation, in what follows we will omit the coefficients of the monomials of $g_i$. If not otherwise stated, we always consider equations where the coefficients of the given monomials are general in $\mathbb C^*$.
\end{notation}
\begin{example}\label{exK3}
    Let $\mathcal P=(\Pi(\Delta^1),\Pi(\Delta^2))$ be the good pair of generalized nef partitions whose polytopes have the following vertices:\\
   \begin{center}
        \begin{tabular}{c|c|c|c}
             Vertices of $\Delta^2_1$&Vertices of $\Delta^2_2$&Vertices of $\Delta^1_1$&Vertices of $\Delta^1_2$  \\
             \hline

             $ (-1,  0,   -1,  -2)$&$( -1,    1,   -2,  -1)$&$(0,-1,0,-1)$&$(-1,1,-2,3)$\\
             $ ( -1,    0,  -1,    2)$&$  ( -1,    1,   -2,   3)$&$(-1,0,3,-2)$&$(0,1,-1,0)$\\
             $( -1,    0,   3,   -2)$&$( -1,    1,    2,  -1)$&$(1,0,1,0)$&$(0,0,0,0,0)$\\
             
             $(  1,    0,   1,    0)$&$ (  1,    1,    0,   1)$&$(0,0,0,0)$&\\
             $   (\frac{1}{3}, -\frac{4}{3}, \frac{1}{3}, -\frac{2}{3})$&$ (\frac{1}{3}, -\frac{1}{3}, -\frac{2}{3}, \frac{1}{3})$&&\\[10pt]   \end{tabular}
        \label{example3fold}
        \end{center}
The polytope $\Delta^2=\Delta^2_1+\Delta^2_2$ has vertices 
 \begin{align*}
      &( -2,    1,   -3,   -3),
    ( -2,    1,   -3,    5),
    ( -2,    1,    5,   -3),
    (  2,    1,    1,    1),
    (2/3, -5/3, -1/3, -1/3)
 \end{align*} 
and it is the anticanonical polytope of $\pp(1,1,1,2,3)$. 
Observe that $\Delta^2$ is not reflexive.
The vertices of $(\Delta^2)^\circ$ are the rays of the fan of $Z$:
\begin{align*}
      &n_1=(0,1,-1,-1),\ n_2=(-1, 0, 0,1),\ n_3=(-1, 0, 1, 0),\\
      &n_4=(1, 1, 0, 0),\ n_5=(0, -1, 0, 0) \end{align*}
The partition of ${\rm Vert((\Delta^2)^\circ)}$ given by $I_1\sqcup I_2=\{n_1, n_3, n_4\} \sqcup \{n_2, n_5\}$ gives the good pair of generalized nef partitions $\mathcal P$, which is irreducible and quasismooth. 
Therefore, by Theorem \ref{CY}, it defines the family of Calabi-Yau surfaces, i.e. K3 surfaces, in $Z_{\Delta^2}=\pp(1,1,1,2,3)$ with equations
\[\begin{cases}
g_1 = x_3^4 + x_1x_2x_4 + x_4^2 +x_1x_5= 0 \\
g_2 = x_2^4 + x_1^2x_4 + x_3x_5=0,
\end{cases}\]
where we omit the coefficients in the equations  according to Notation \ref{coef}.

A different generalized nef partition  can be obtained  considering $\Pi(\tilde \Delta_1)=(\tilde{\Delta^1_1},\tilde{\Delta^1_2})$, where $\tilde{\Delta^1_i}$ is the convex hull of the lattice points of $\Delta^2_i, i=1,2$. 
This gives a different good pair  $(\Pi(\tilde\Delta^1),\Pi(\Delta_2))$. However, using the  MAGMA functions in Appendix \ref{magma}, it can be proved that this pair is not quasismooth.
\end{example}

\subsection{Duality}\label{subs_duality}
The duality between good pairs of generalized nef partitions   in Definition \ref{dualgp} 
and Theorem \ref{CY} naturally define a 
duality between families of Calabi-Yau varieties associated to good pairs of generalized nef partitions.

More explicitly, if $\mathcal P$ is a good pair of generalized nef partitions as in the statement of Theorem \ref{CY}, then  $\mathcal P^*=(\Pi(\nabla^2),\Pi(\nabla^1))$ is also an irreducible good pair of generalized nef partitions  
by Proposition \ref{dualgnp}, Proposition \ref{goodpairs} and Lemma \ref{dual-irred}. 
If $\mathcal P^*$ is also quasismooth this defines {\em dual families $\mathcal F_{\mathcal P}$ and $\mathcal F_{\mathcal P^*}$  
of Calabi-Yau varieties} in $\qq$-Fano toric varieties.

In what follows we will denote by $Z^*$ the dual toric ambient space, by $X^*$ a Calabi-Yau variety in the dual family $\mathcal F_{\mathcal P^*}$, and by $g_1^*,\dots,g_s^*$ its equations in the Cox ring of $Z^*$.

\begin{remark}
In Remark \ref{rmk_geometry} we observed that, in order to be able to give a certain toric geometric interpretation to both a generalized nef partition and its dual,  we need $\Delta$ to be a lattice polytope. Here the hypothesis of being a good pair ensures this condition on $\Delta^2$ and $\nabla^1$.
\end{remark}

Unfortunately the quasismoothness of $\mathcal P^*$ does not follow from the properties of $\mathcal P$ in general, as the following example shows.
In \S \ref{codim2} we will prove that it holds for all families of quasismooth codimension two K3 surfaces in weigthed projective spaces.
We expect that this can be proven true in a more general setting, we will explore this question in further work. 

\begin{example}\label{counterexample}
    Let $(\Pi(\tilde{\Delta^1}),\Pi(\Delta^2))$ be as in Example \ref{exK3}. We observed that this is a good pair of generalized nef partitions which  is not quasismooth. On the other hand, its dual $(\Pi(\nabla^2),\Pi(\tilde{\nabla^1}))$ is quasismooth; this can be proven with the MAGMA  functions in the Appendix \ref{magma}.
\end{example}

 \begin{definition}\label{matrix}
Let $\mathcal P=(\Pi(\Delta^1),\Pi(\Delta^2))$ be a good pair of generalized nef partitions. Fix an order for the  vertices $m_i^1,\dots, m_i^{k_i}$ of $\Delta_i^1$ and for the  vertices $n_j^1,\dots, n_j^{\ell_j}$ of $\nabla_j^2$, where $i,j\in \{1,\dots,s\}$.
The {\em matrix of $\mathcal P$} is  
\[ 
A_{\mathcal P} = 
\begin{pmatrix}
  A_{1,1} & \cdots & A_{1,s} \\
  \vdots & \ddots & \vdots \\
  A_{s,1} & \cdots & A_{s,s}
\end{pmatrix},
\]
where each block is of the form \(A_{i,j} = (\langle m, n\rangle+  \delta_{i,j})_{m,n}\), with \(m\) running in the list $[0,m_i^1,\dots, m_i^{k_i}]$ and \(n\)  in the list $[0,n_j^1,\dots, n_j^{\ell_j}]$.
\end{definition}

\begin{remark}  
In terms of the defining equations $g_1,\dots,g_s$ of the subvarieties associated to   $\mathcal P$, observe that the blocks 
$A_{i,1},\dots, A_{i,s}$ 
correspond to the polynomial $g_i$. More precisely, each row of $A_{i,j}$ starts with $\delta_{ij}$ and follows with the list of exponents of a monomial of $g_i$ with respect to the variables of $\mathcal R(Z_{\Delta^2})$ corresponding to the elements of $I_j=\{n_j^1,\dots, n_j^{\ell_j}\}$.
Equivalently, $\mathcal A_\mathcal P$ is the matrix of exponents of the polynomial $F=t_1g_1+\cdots +t_sg_s
\in \mathcal R(Z(E))$ given in \eqref{pot}.
\end{remark}

An immediate consequence of the definition is the following.
\begin{proposition}
$A_{\mathcal P^*}=A_{\mathcal P}^T$
\end{proposition}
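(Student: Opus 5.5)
The plan is to compare the two matrices block by block. Write $\mathcal P^*=(\Pi(\nabla^2),\Pi(\nabla^1))$. Its "inner" partition is $\Pi(\nabla^2)$, whose pieces play the role of the $\Delta^1_i$; so the rows of block $(i,j)$ of $A_{\mathcal P^*}$ run over $[0,\text{vertices of }\nabla^2_i]$. Its "outer" partition is $\Pi(\nabla^1)$, whose dual is $\Pi(\Delta^1)$ by the involutivity of the duality (Proposition \ref{dualgnp} and the remark following it). So the columns of block $(i,j)$ run over $[0,\text{vertices of }\Delta^1_j]$. The entries are $\langle m,n\rangle+\delta_{i,j}$ with $n\in\{0\}\cup{\rm Vert}(\nabla^2_i)$ and $m\in\{0\}\cup{\rm Vert}(\Delta^1_j)$. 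I will fix the vertex orderings for $\mathcal P^*$ to be those already chosen for $\mathcal P$, with the roles of $M$ and $N$ swapped.

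With this, block $(i,j)$ of $A_{\mathcal P^*}$ is the matrix $(\langle m,n\rangle+\delta_{j,i})$ whose rows are indexed by $n$ and columns by $m$. Block $(j,i)$ of $A_{\mathcal P}$ is $(\langle m,n\rangle+\delta_{j,i})$ with rows indexed by $m\in\{0\}\cup{\rm Vert}(\Delta^1_j)$ and columns by $n\in\{0\}\cup{\rm Vert}(\nabla^2_i)$. Since the pairing is symmetric under the identification $N=\mathrm{Hom}(M,\zz)$ and $\delta$ is symmetric, $(A_{\mathcal P^*})_{i,j}=(A_{\mathcal P})_{j,i}^T$. This is exactly the block description of the transpose.

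The only point needing care is a bookkeeping one, and it is where I expect the (mild) obstacle to lie. I must check that the "$\Delta^1_i$" and "$\nabla^2_j$" of Definition \ref{matrix}, applied to $\mathcal P^*$, really are $\nabla^2_i$ and $\Delta^1_j$. The first is immediate from the definition of $\mathcal P^*$. The second uses that the dual of $\Pi(\nabla^1)$ is $\Pi(\Delta^1)$, with the same indexing of the parts, as established in Proposition \ref{dualgnp}. Once the orderings of vertices are chosen compatibly, the equality holds on the nose rather than only up to row and column permutations.
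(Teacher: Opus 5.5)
Your proof is correct and takes the same approach as the paper, which simply calls the statement an immediate consequence of Definition \ref{matrix}; you have written out that unpacking. The inner pieces of $\mathcal P^*$ are the $\nabla^2_i$, the dual of its outer partition $\Pi(\nabla^1)$ is $\Pi(\Delta^1)$ with the same indexing by Proposition \ref{dualgnp}, and so $(A_{\mathcal P^*})_{i,j}=\bigl((A_{\mathcal P})_{j,i}\bigr)^T$ once the vertex orderings are chosen compatibly, as you note.
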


\begin{example}\label{ex_matrix}
   Let $\mathcal P=(\Pi(\Delta^1),\Pi(\Delta^2))$ be the good pair of generalized nef partitions of Example \ref{ex-nestedpart} and $\mathcal P^*=(\Pi(\nabla^2),\Pi(\nabla^1))$ be its dual. Here $r=4$, $s=2$ and the matrix of the pair is
    \[
    \label{A-ex}
A_{\mathcal P} = \left(
\begin{array}{ccc|ccc}
1 & 1 & 1 & 0 & 0 & 0 \\
1 & 0 & 0 & 0 & 1 & 0 \\
1 & 0 & 1 & 0 & 0 & 1 \\
\hline
0 & 0 & 0 & 1 & 1 & 1 \\
0 & 1 & 0 & 1 & 1 & 0 \\
0 & 1 & 1 & 1 & 0 & 1
\end{array}\right)
\]

 One can show that the good pair $\mathcal P$ gives the complete intersection in ${\rm Bl}_p\pp^2$ with equations  
 \[
 \begin{cases}
g_1 =  x_1x_3 + x_0 + x_2x_3=0 \\
g_2 =  x_0x_1 + x_1x_2x_3 +  x_0x_2=0. 
\end{cases}\] 
 
 We now show how the transposed matrix $A^T=A_{\mathcal P^*}$ gives a complete intersection in $Z^*=X_{\nabla^1}$. 
 First observe that from \eqref{eqnabla}, $\nabla^1\subset N$ is the polytope with vertices 
 \[ (1,1), (1,-1), (-1,1), (-1,-1).
\]
Therefore $Z^*=Z_{\nabla^1}=\pp^1\times\pp^1$, with variables $(y_1,y_2),(y_3,y_4)$.
If we order the columns of the transposed matrix $A^T$ in the following way:
 \[\label{AT-ex}
A_{\mathcal P^*}=A^T = \left(
\begin{array}{ccc|ccc}
1 & 1 & 1 & 0 & 0 & 0 \\
1 & 0 & 0 & 0 & 1 & 1 \\
1 & 0 & 1 & 0 & 0 & 1 \\
\hline
0 & 0 & 0 & 1 & 1 & 1 \\
0 & 1 & 0 & 1 & 1 & 0 \\
0 & 0 & 1 & 1 & 0 & 1\\
\end{array}\right),
\]
we get the dual equations 
 \[
 \begin{cases}
g_1^* =  y_1y_2 +  y_3y_4 +y_2y_4=0  \\
g_2^* =  y_3y_4 + y_1y_3 +y_2y_4=0.
\end{cases}
\]
Observe that in this example the complete intersections $X$ and $X^*$ have dimension 0, i.e. they are pairs of points.
\end{example}
\begin{remark}
  If  $s=1$, this construction recovers the generalized BHK duality described in \cite{ACG}.
\end{remark}

In case the polytopes $\Delta^2$ and $\nabla^1$ are simplices, both ambient spaces $Z$ and $Z^*$ are fake weighted projective spaces, thus we obtain a duality between families of Calabi-Yau varieties  which can be considered as a generalization of the classical Berglund-H\"ubsch-Krawitz duality \cite{BH,Kr, ACG} to the complete intersection case.

\begin{definition}\label{delsarte}
    A good pair of generalized nef partitions $(\Pi(\Delta^1),\Pi(\Delta^2))$ is a {\em Delsarte pair} if $\Delta^2$ and $\Conv(\Delta_1^1,\ldots,\Delta_s^1)$ are simplices.
\end{definition}
Observe that this implies that $\nabla^1$ and $\Conv(\nabla_1^2,\ldots,\nabla_s^2)$ are simplices by Proposition \ref{prop:origin}, i.e. $(\Pi(\nabla^2),\Pi(\nabla^1))$ is also a Delsarte pair.

It follows from Lemma \ref{4.12} that ${\rm Vert}(\Conv(\Delta^1_1,\ldots,\Delta^1_s))=\bigcup_{i}{\rm Vert}(\Delta^1_i)\backslash\{0\}$. Thus if $(\Delta^1,\Delta^2)$ is a Delsarte pair, then 
\begin{equation}\label{eq-delsarte}
\sum_{i=1}^s \#{\rm Vert}(\Delta_i^1)=n+s+1.
\end{equation}
Equivalently, the polynomial $F$ has the same number
of variables as monomials, i.e. $A_\mathcal P$ is a square matrix.

\begin{example}
In Example \ref{exK3} we considered an irreducible and quasismooth good pair of generalized nef partitions $\mathcal P=(\Pi(\Delta^1),\Pi(\Delta^2))$ defining a family of K3 surfaces $\mathcal F_\mathcal P$ in $Z_{\Delta^2}=\pp(1,1,1,2,3)$. The dual pair $\mathcal P^*=(\Pi(\nabla^2),\Pi(\nabla^1))$ is an irreducible good pair of generalized nef partitions and one can prove that it is quasismooth. Thus it defines a dual family of Calabi-Yau surfaces $\mathcal F_{\mathcal P^*}$. 
The dual ambient space $Z^*=Z_{\nabla^1}$ is the fake weighted projective space with integer grading $(1,2,2,3,4)$ and quotient grading $1/3(0,1,2,1,0)$.
If we call $y_1,\ldots,y_5$ the variables of $Z^*$, a general element of the family $\mathcal F_{\mathcal P^*}$ has equations (see Notation \ref{coef})
\[
\begin{cases}
g^*_1=y_1^2y_5 + y_1y_2y_3 + y_3y_5^2 + y_4^4=0\\
g_2^*=y_2^4 + y_3 + y_4y_5=0.
\end{cases}
\]
\end{example}

\subsection{From equations to good pairs of generalized nef partitions}

We will now explain under which assumptions and how, given a family of Calabi-Yau complete intersections in a toric variety, we can apply the duality of good pairs of generalized nef partitions defined in the previous sections.

Let $Z$ be an $n$-dimensional $\mathbb Q$-Fano projective toric variety with canonical singularities with 
anticanonical polytope $\Delta^2$ and
Cox ring $\mathcal R(Z)=\cc[x_1,\dots,x_r]$.
Let $X$ be a quasismooth complete intersection Calabi-Yau, defined as the zero set of the homogeneous elements $g_1,\dots,g_s\in \mathcal R(Z)$ 
whose degrees are $\mathbb Q$-Cartier and nef in $\Cl(Z)$. 

\begin{assumption}\label{assump2} For each $i=1,\ldots,s$, we assume that the polynomial $g_i$ contains a monomial $m_i$ such that 
     \[
     m_1\cdot \ldots\cdot m_s=\prod_{j=1}^rx_j.
     \]
Fix a set of such monomials, 
    which will be called {\em marked monomials}.
    \end{assumption}
    
   The marked monomials give a partition of the variables $x_1,\ldots,x_r$, or equivalently a partition of the rays of the fan of $Z$.  We can then construct the polytopes $\Delta^2_1,\ldots,\Delta^2_s$ as in \eqref{deltai} and these define a generalized nef partition $\Pi(\Delta^2)$ since $\deg(m_i)=\deg(g_i)$ are $\mathbb Q$-Cartier and nef for all $i$ (see Proposition \ref{geo-gnp}).
   
   For each $g_i$ consider the polytope $\Delta^1_i:=\Delta(D_i,g_i)\subseteq \Delta^2_i$,
    where $D_i$ is the toric divisor obtained as the zero set of $m_i$ (see Notation \ref{not1}). Let $\Delta^1:=\Delta^1_1+\ldots+\Delta^1_s$.
    
  \begin{assumption} \label{assump3} Assume that $\Delta^1$ contains the origin in its interior and that $\Pi(\Delta^1)=(\Delta^1_1,\ldots,\Delta^1_s)$ is a generalized nef partition. 
  \end{assumption}
    
    Observe that if $\Delta^1$ is reflexive, then the conditions in Assumption \ref{assump3}  hold, by Remark \ref{rem-gnp} and the fact that $\Delta_1^1\subset\Delta^2_i$ for all $i$. 
    
    Under the previous assumptions  $(\Pi(\Delta^1),\Pi(\Delta^2))$ is a good pair of generalized nef partitions.

We now show how the duality works in several examples. We will see that in 
some known examples the construction gives the dual family already found in literature.

\begin{example}[{\em Libgober-Teitelbaum example}]\label{ex-LT}
    We consider the quasismooth complete intersection, first considered by Libgober and Teitelbaum in \cite{LT93}, given by the following equations in $Z=\pp^5$:
\[\begin{cases}
g_1 =  \underline{x_1x_2x_3} +  x_4^3 +  x_5^3 +  x_6^3 = 0 \\
g_2 =  \underline{x_4x_5x_6} + x_1^3 + x_2^3 + x_3^3 = 0.
\end{cases}\]
Recall that, by Notation \ref{coef}, we omit the coefficients of the monomials.
The polynomials $g_1,g_2$ satisfy Assumptions \ref{assump2} and \ref{assump3}, in particular a choice of  marked monomials is underlined.
Let $\Pi(\Delta^2)=(\Delta^2_1,\Delta^2_2)$ be the generalized nef partition of the anticanonical polytope $\Delta^2$ of $\pp^5$ associated to the choice of the marked monomials.
 Let $\Delta_i^1\subset\Delta^2_i$, $i=1,2$, be the polytopes associated to the polynomials $g_1,g_2$.
It can be checked that $\Pi(\Delta^1)=(\Delta_1^1,\Delta^1_2)$ is a generalized nef partition of $\Delta^1 = \Delta_{1}^1 + \Delta_{2}^1$ and that $\Delta^1$ contains the origin as an interior point. Therefore $\mathcal P:=(\Pi(\Delta^1), \Pi(\Delta^2))$ is a good pair of generalized nef partitions.

A MAGMA computation gives the dual good pair of generalized nef partitions $\mathcal P^*=(\Pi(\nabla^2),\Pi(\nabla^1))$. 
The polytope $\nabla^1 = \nabla_{1}^1 + \nabla_{2}^1$ has $6$ vertices, none of which is a lattice point, but its dual $(\nabla^1)^\circ$ is a canonical polytope. 
Moreover $\Delta^2$ is a reflexive polytope and $\nabla^2$ is reflexive and hence is a lattice polytope.
The polytope $\nabla^1$ is the anticanonical polytope of the dual ambient space $Z^*=X_{\nabla^1}$ and, in agreement with \cite{LT93}, we find that $Z^*\cong \mathbb{P}^5 / (\mathbb{Z}/3\mathbb{Z} \times \mathbb{Z}/3\mathbb{Z} \times \mathbb{Z}/9\mathbb{Z})$, i.e. $Z^*$ is the fake weighted projective space with integer grading $(1,1,1,1,1,1)$ and quotient gradings 
$ 1/3( 0, 2, 1, 1, 1, 1 ),
    1/3( 0, 0, 0, 1, 2, 0 ),
    1/9( 0, 3, 3, 8, 2, 8 )$.

By considering $A_{\mathcal P^*}=A_P^T$, which in this case coincides with $A_\mathcal P$, one gets the equations of the dual family of quasismooth complete intersections in $Z^*$:
\[
\begin{cases}
g_1^*= y_1y_2y_3 +  y_4^3 + y_5^3 + y_6^3=0 \\
g_2^* =  y_4y_5y_6 +  y_1^3 +  y_2^3 + y_3^3=0.
\end{cases}
\]
Observe that in this case the good pair $\mathcal P$ is a Delsarte pair. 
\end{example}

Observe that different choices of the marked monomials for a Calabi-Yau complete intersection can lead to multiple mirrors.

\begin{example}
Let $Z=\pp(1,1,1,1,2)$, $\Delta^2$ be its anticanonical polytope and $X$ be the family of quasismooth complete intersection given by the following equations, considering Notation \ref{coef}:
\[
\begin{cases}
g_1= x_1x_2x_3+x_2^3 + x_4^3 + x_3x_5=0\\
g_2 = x_4x_5+x_1^3 + x_2x_3^2 +  x_1x_2x_4=0,
\end{cases}
\]
In this case there are two different choices of marked monomials and thus partitions of vertices of $\Delta^2$: the choice of marked monomials $x_1x_2x_3$ (resp. $x_3x_5$) in $g_1$ and $x_4x_5$ (resp. $x_1x_2x_4$) in $g_2$ defines the partition $I_1\sqcup I_2$ (resp. $\bar I_1\sqcup \bar I_2$) of vertices of $(\Delta^2)^\circ$. 
This gives two different generalized nef partitions $(\Delta^2_1,\Delta^2_2)$ and $(\bar\Delta^2_1,\bar \Delta^2_2)$ of $\Delta^2$. 
The construction then gives the good pairs of generalized nef partitions 
$\mathcal P=(\Pi(\Delta^1),\Pi(\Delta^2))$ and $\bar{\mathcal P}=(\Pi(\bar \Delta^1),\Pi(\Delta^2))$.
From the dual partitions $\mathcal P^*$  and $\bar{\mathcal P}^*$ one gets two families of K3 surfaces in different ambient spaces $Z^*$ and $\bar Z^*$: toric varieties whose homogeneous coordinate ring has $6$ variables whose respective gradings are given by the columns of the following matrices: 
 \[
\left[
\begin{array}{cccccc}   
0&  1&  0&  0&  0&  1\\
9&  0&  6& 7&  5& 15\end{array}
\right],\quad \left[
\begin{array}{cccccc}   
0&  0&  1&  0&  0&  1\\
7&  5&  0&  9&  6& 12
    \end{array}
\right].
\]
Observe that both toric varieties have a fibration to $\pp(5,6,7,9)$. Moreover, in both cases, one equation of the dual family of K3 surfaces contains a simple variable 
among its monomials, thus  $X^*$ and $\bar X^*$  are isomorphic to K3 hypersurfaces in $\pp(5,6,7,9)$.
\end{example}

\begin{example}[{\em Schoen's type Calabi-Yau}]\label{ex-S}
Consider the family of complete intersections in $Z=\mathbb P^2\times \mathbb P^2\times \mathbb  P^1$ defined by the following equations:
\[
\begin{cases}
g_1=z_0(x_0^3+x_1^3+x_2^3+x_0x_1x_2)+z_1x_0x_1x_2=0\\
g_2=z_0y_0y_1y_2+z_1(y_0^3+y_1^3+y_2^3+y_0y_1y_2)=0,
\end{cases}
\]
where $(x_0,x_1,x_2)$ are the coordinates of the first component of $Z$, $(y_0,y_1,y_2)$ of the second component and $(z_0,z_1)$ of the third component.
This is an example of the Calabi-Yau threefolds constructed in \cite{Schoen} as fiber products of two rational elliptic surfaces. In this case the surfaces are given both by the Hesse pencil of plane cubics.
The choice of the marked monomials $z_0x_0x_1x_2$ in the first equation and $z_1y_0y_1y_2$ in the second equation defines a nef partition of the anticanonical polytope $\Delta^2$ of $Z$.
Let $\Delta_1^1,\Delta_2^1$ the   polytopes associated to $g_1,g_2$ respectively. Since $\Delta^1=\Delta_1^1+\Delta_2^1$ is reflexive, this defines a nef partition $\Pi(\Delta^1)$. 
Thus $\mathcal P=(\Pi(\Delta^1),\Pi(\Delta^2))$ is a good pair of nef partitions. 
A MAGMA computation gives the dual good pair $\mathcal P^*$. 
The corresponding toric variety $Z^*$ is the quotient of $Z$ by the action of the group $\zz/3\zz\oplus \zz/3\zz$ acting as follows:
\[
\begin{array}{l}
(x_0,x_1,x_2),(y_0,y_1,y_2),(z_0,z_1)\mapsto (x_0,\zeta_3 x_1,\zeta_3^2x_2),(y_0,y_1,y_2),(z_0,z_1)\\[10pt]
(x_0,x_1,x_2),(y_0,y_1,y_2),(z_0,z_1)\mapsto (x_0, x_1, x_2),(y_0,\zeta_3 y_1,\zeta_3^2 y_2),(z_0,z_1)
\end{array}
\]
and the dual complete intersection is defined by the same polynomials in $Z^*$: $g_1^*=g_1, g_2^*=g_2$.
Observe that the group action on $Z$ has $6$ fixed subvarieties defined by $x_i=x_j=0$ and $y_i=y_j=0$ for distinct $i,j$. 
These intersect the Calabi-Yau complete intersection in $6$ disjoint smooth curves of genus one. 
By \cite[Proposition 7.1]{HSS} 
this family is exactly the mirror of the family of Schoen's Calabi-Yau threefolds.
In particular it is known that the Hodge numbers of the Calabi-Yau threefolds in both families are $(h^{1,1},h^{2,1})=(19,19)$ \cite[Lemma 2.1]{HSS}.  

Observe that the choice of the marked monomials $z_1x_0x_1x_2$ and $z_0y_0y_1y_2$ gives a different mirror family: a complete intersection of type $(3,3)$ in a quotient of $\mathbb P^5$ by the group $(\zz/3\zz)^{\oplus 3}$.
\end{example}

\begin{example}[{\em Calabrese-Thomas example}]
Let $Z$ be the blow-up of $\mathbb P^5$ at one point and $\mathcal F$ be the family of Calabi-Yau varieties given as smooth complete intersections of two hypersurfaces of $Z$ in the linear system $|3H-2E|$, where $E$ denotes the exceptional divisor and $H$ the pull-back of a hyperplane in $\mathbb P^5$. Let $R$ be the set of primitive generators of the rays of the fan of $Z$:
\[
R=\left\{e_1,e_2,e_3,e_4,e_5, -\sum_i^5e_i,\sum_i^5e_i\right\},
\]
where $e_i$ denotes the $i$-th element of the canonical basis,  
and consider the partition given by:
\[
I_1=\{e_1,e_2,e_3,\sum_i^5e_i\},\quad 
I_2=\{e_4,e_5, -\sum_i^5e_i\}.
\]
Let $\Pi(\Delta)=(\Delta_1,\Delta_2)$ be the corresponding nef partition of the anticanonical polytope $\Delta$ of $Z$.  
The dual good pair is given by $\Pi^*=(\nabla_1,\nabla_2)$ where
\[
\nabla_1={\rm conv}(0,e_1,e_2,e_3,\sum_i^5e_i),
    \quad
\nabla_2={\rm conv}(0,e_4,e_5, -\sum_i^5e_i).    
\]
The homogeneous coordinate ring of the toric variety $Z^*$  defined by the normal fan to the polytope $\nabla:=\nabla_1+\nabla_2$  has $20$ generators and divisor class group of rank $15$. The dual family of Calabi-Yau complete intersections in $Z^*$ has equations of the form:
\[
\begin{array}{l}
\left(\prod_{i=1}^8 x_i\right)x_{17} x_{18}
+ \prod_{i=1}^{10}x_{2i} 
+ x_5^{2} x_6^{3} x_{13}^{2} x_{14}^{3}
+ x_7^{2} x_8^{3} x_{15}^{2} x_{16}^{3}
+ x_{17}^{2} x_{18}^{3} x_{19}^{2} x_{20}^{3}=0
\\[10pt]
\left(\prod_{i=9}^{16} x_{i}\right)x_{19}x_{20}+
 \prod_{i=1}^{10} x_{2i-1}+
x_1^{2} x_2^{3} x_9^{2} x_{10}^{3}
+ x_3^{2} x_4^{3} x_{11}^{2} x_{12}^{3}=0.
\end{array}
\]
The polytopes associated to the choice of the marked monomials 
$\left(\prod_{i=1}^8 x_i\right)x_{17} x_{18}$ and $\left(\prod_{i=9}^{16} x_{i}\right)x_{19}x_{20}$ are exactly 
$\nabla_1, \nabla_2$.
On the other hand, choosing $\prod_{i=1}^{10}x_{2i}$ and  $\prod_{i=1}^{10} x_{2i-1}$ as marked  monomials one obtains a new nef partition $\Pi^b=(\nabla_1^b,\nabla_2^b)$ of $\nabla$, where
\[
\nabla_1^b={\rm conv}(0,-\sum_{i=1}^5  e_i, -\sum_{i=1}^5  e_i+e_1, -\sum_{i=1}^5  e_i+e_2, -\sum_{i=1}^5  e_i+e_3),
\]
\[
\nabla_2^b={\rm conv}(0,\sum_{i=1}^5  e_i, \sum_{i=1}^5  e_i+e_4,\sum_{i=1}^5  e_i+e_5).
\]
The dual nef partition defines the family $\mathcal F'$ of all complete intersections in $\mathbb P^1\times \mathbb P^4$ of a hypersurface of degree $(1,3)$ and a hypersurface of degree $(1,2)$. Thus the families $\mathcal F$ and $\mathcal F'$ are multiple mirrors, i.e. have the same mirror family.

In \cite{CT} Calabrese and Thomas show that general members of the families $\mathcal F,\mathcal F'$ are derived equivalent Calabi-Yau threefolds.  
In \cite{Li} the author shows that these are multiple mirrors and proves that this implies their birationality. 
 \end{example}

\begin{example}
Let $X\subset\pp^5$ be the family of complete intersection of three quadrics with equations (see Notation \ref{coef}):
\[\begin{cases}
g_1=\underline{x_1x_2}+x_3^2+x_4x_5=0\\
g_2=\underline{x_3x_4}+x_1^2+x_5x_2=0\\
g_3=\underline{x_5x_6}+x_2x_4+x_6^2=0, 
\end{cases}
\] where marked monomials are underlined.
This is a family of quasismooth K3 surfaces in $\pp^5$.
The dual family is given by quasismooth
 complete intersections $X^*$ in the fake weighted projective space with integer gradings $(1,1,1,1,1,1)$ and   quotient gradings $ 1/5( 0, 3, 1, 2, 1, 4 )$ with equations
\[
\begin{cases}
g_1^*=y_2y_4+y_5y_6+y_6^2=0\\
g_2^*=y_1^2+y_2y_5+y_3y_4=0\\
g_3^*=y_1y_2+y_3^2+y_4y_5=0.  
\end{cases}
\] 
\end{example}

\subsection{Codimension two K3 surfaces in weighted projective spaces}\label{codim2}
In this section we consider families of  quasismooth K3 surfaces defined by Delsarte good pairs of generalized nef partitions (see Definition \ref{delsarte}) in 4-dimensional weighted projective spaces. As observed after Definition \ref{delsarte}, the dual good pairs are still of Delsarte type. 
By an exhaustive study of all cases, we prove that in this case quasismoothness is preserved under duality, so that the dual good pair still defines a family of codimension two K3 surfaces in fake weighted projective spaces.

\begin{proposition}\label{prop-codim2}
  Let $\mathcal F_{\mathcal P}$ be a family of  K3 surfaces in $\pp(w_1,\ldots,w_5)$ associated to a quasismooth good pair $\mathcal P$ of generalized nef partitions of Delsarte type. 
Then $\mathcal P^*$ is quasismooth and thus $\mathcal F_{\mathcal P^*}$ is a family of codimension two K3 surfaces in a fake weighted projective space.
\end{proposition}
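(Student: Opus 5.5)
We will reduce the statement to a finite combinatorial check on square exponent matrices, in the spirit of the classification of invertible polynomials in the BHK setting. Since $\mathcal P$ is of Delsarte type, by \eqref{eq-delsarte} the matrix $A_{\mathcal P}$ is square of size $n+s+1=7$ (with $n=4$, $s=2$). Equivalently, $F=t_1g_1+t_2g_2$ has exactly as many monomials as variables in $\mathcal R(Z(E))$. Quasismoothness of $\mathcal P$ is then, by Proposition \ref{prop-qs}, quasismoothness of the hypersurface $Y=\{F=0\}$ in $Z(E)$. The dual pair $\mathcal P^*$ has matrix $A_{\mathcal P}^T$, so it corresponds to the ``transposed'' polynomial $F^T$. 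This is exactly the situation of \cite{ACG} for hypersurfaces, except that $F$ is bihomogeneous and linear in the $t_i$, and the ambient $Z(E)$ is not a fake weighted projective space.

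First step: classify the possible shapes of $g_1,g_2$. In a fake weighted projective space $Z$ with coordinates $x_1,\dots,x_5$, each $g_i$ has $\#{\rm Vert}(\Delta^1_i)$ monomials, one of them marked, and the two counts add up to $7$. So the splits are $(2,5),(3,4),(4,3),(5,2)$. Up to the symmetry swapping the indices, one equation has $2$ or $3$ monomials. Quasismoothness then forces strong restrictions, via the Corollary to Theorem \ref{cor-qs} (Fletcher-type conditions) applied to every subset $I$ of the variables. For singletons $I=\{j\}$, each variable $x_j$ must appear either as a pure power $x_j^{a}$ or as $x_j^{a}x_k$ in $g_1$ or $g_2$, with the pairing conditions of cases (i)--(iv). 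Combined with squareness and irreducibility, this should force each row of $A_{\mathcal P}$ (each monomial) to involve at most two variables other than the marked-monomial support. The result should be a finite list of ``atomic'' configurations for $F$: Fermat, chain and loop blocks glued across $g_1,g_2$, together with the marked monomials. I would enumerate these combinatorial types, using the MAGMA functions of the Appendix to confirm that the list is complete.

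Second step: for each type, verify the criterion of Theorem \ref{cor-qs} (necessary and sufficient here since $Z^*$ is again a fake weighted projective space, $\mathcal P^*$ being Delsarte) for the transposed system $g_1^*,g_2^*$. Well-formedness of $X^*$ also needs checking; for surfaces this reduces to $X^*$ avoiding singular curves of $Z^*$ in codimension one of $X^*$. I would argue it follows from the exponents, as in the Remark on \cite{F}. Non-degeneracy follows from generality of coefficients by Khovanskii's remark. The guiding heuristic is that the Fletcher conditions for a subset $I$ of variables of $F^T$ translate into conditions on the set of monomials of $F$ indexed by $I$. Block-type decompositions (Fermat/chain/loop) are transpose-stable, as in Kreuzer--Skarke. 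Where a configuration is not of pure block type (e.g. the marked monomial $x_1x_2x_3$ mixing variables), I would check it directly.

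The main obstacle is the first step: showing the enumeration is exhaustive without a clean structure theorem. The marked monomials need not be of chain/loop form, so Kreuzer--Skarke does not apply verbatim. I would first try to prove that the dependency conditions of Proposition \ref{prop1} for $Y$, applied to strata $\hat D_I$ with $|I|$ small, already imply that $A_{\mathcal P}$ has, after permutation, a block-triangular form stable under transposition. Failing that, I would fall back on an exhaustive computer search over all Delsarte pairs arising from the finitely many weight vectors $(w_1,\dots,w_5)$ admitting quasismooth codimension-two K3 surfaces (the known finite lists of Iano-Fletcher type). For each resulting pair I would test the quasismoothness of $\mathcal P^*$ with the Appendix code.
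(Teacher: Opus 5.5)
Your fallback is essentially the paper's proof. The paper takes the $84$ vectors $(m,n,w_1,\dots,w_5)$ of \cite[Section 13.8, Table 2]{F} and \cite{altinokbrownreid}. For each vector it enumerates the compatible generalized nef partitions $\Pi(\Delta^2)$ and all choices of $\Delta^1_1,\Delta^1_2$ giving a quasismooth Delsarte pair (so with $7$ vertices in total). It then checks quasismoothness and irreducibility of each dual with the MAGMA code. There is no structural argument at all.

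Your first step is therefore unnecessary, and as stated it would not go through. $F=t_1g_1+t_2g_2$ is not an invertible polynomial in the Kreuzer--Skarke sense: its singular locus in $\cc^7$ contains the positive-dimensional set $\{t_1=t_2=0,\ g_1=g_2=0\}$, which is removed only by the irrelevant ideal of $Z(E)$. So the transpose-stability of Fermat/chain/loop blocks does not transfer, and the block-triangular normal form you hope for in $A_{\mathcal P}$ is unsupported.

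Similarly, the remark from \cite{F} you invoke for well-formedness needs dimension at least $3$ and a genuine weighted projective space. For the K3 surfaces $X^*$ in the fake weighted projective space $Z^*$, well-formedness must be checked directly, which \texttt{IsQsCI} does.

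Finally, for the ``thus'' part, Theorem \ref{CY} also requires $\mathcal P^*$ to be irreducible. This follows from Lemma \ref{dual-irred}.
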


\begin{proof}
By \cite[Section 13.8, Table 2]{F} and  \cite{altinokbrownreid}, there are exactly $84$ possible
vectors $(m,n,w_1,\ldots,w_5)$ such that there exists a quasismooth K3 surface in $\pp(w_1,\dots,w_5)$ defined by equations of degree $m$ and $n$ (up to permuting $m,n$ or $w_1,\dots, w_5$).

In order to prove the result, we use the MAGMA functions contained in the Appendix   to classify and analyze all possible cases for each vector. 
More precisely, for each vector $(m,n,w_1,\ldots,w_5)$, we compute the list of all generalized nef partitions $(\Delta^2_1,\Delta^2_2)$ compatible with it and, for each of them, the possibilities for $\Delta_1^1,\Delta_2^1$ such that $\mathcal P=(\Pi(\Delta^1),\Pi(\Delta^2))$ is a quasismooth and Delsarte good pair of generalized nef partitions. Afterwards, in each case we compute the dual family and check its quasismoothness and irreducibility.
Observe that, in order to be a Delsarte pair, the total number of vertices in $\Delta^1_1$ and $\Delta^1_2$ has to be equal to the number of variables plus the codimension, in this case $7$, by \eqref{eq-delsarte}. 
For example, the possible quasismooth  K3 surfaces associated to the vector $(m,n,w_1,\ldots,w_5)=(2,3,1,1,1,1,1)$, up to reordering variables, are the ones given in Table \ref{tab_P11111}, which also contains
 the dual ambient space $Z^*=Z_{\nabla^1}$, with variables $y_1,\ldots,y_5$, and the dual equations $g_1^*,g_2^*$ (with the convention in Notation \ref{coef}). 
 \end{proof}

{\footnotesize\begin{table}[h]
\begin{tabular}{c|l|l|l}
$Z$ &$g_1,g_2$&$ Z^*$&  $g_1^*,g_2^*$\\
\hline
\hline
$\pp^4$&$x_1x_2+x_4^2 +x_5^2$&$\pp(2,2,2,3,3)$&$ y_4y_5+ y_1^3+y_2^3$\\
&$x_1^3 + x_2^3 + x_3^3 + x_3x_4x_5$& quot: $1/6(0,1,5,3,0)$&$y_1y_2y_3+y_3^3+y_4^2+y_5^2$\\
\hline
$\pp^4$&$  x_1x_2 +x_4^2 +x_5^2$&$\pp(15,9,8,10,12) $&$y_1y_2+y_5^2+y_3^3$\\
&$    x_2^3 + x_3^3 + x_1^2x_5 + x_3x_4x_5$&&$y_3y_4y_5+y_4^3+y_1^2+y_2^2y_5$\\
\hline
$\pp^4$&$   x_1x_2 + x_4^2 +x_5^2 $&$\pp^4$&$y_1y_2+y_4^2+y_5^2$\\
&$  x_3^3 +x_1^2x_4 +x_2^2x_5 + x_3x_4x_5$& quot: $1/8( 0, 2, 6, 3, 1 )$&$y_3^3 + y_1^2y_4 + y_2^2y_5 + y_3y_4y_5$\\
\hline  
$\pp^4$&$ x_1x_2 + x_3^2 +x_4^2 + x_5^2$&$\pp^4$&$ y_1y_2 +y_3^2 + y_4^2 + y_5^2$\\
&$   x_1^3 + x_2^3 + x_3x_4x_5$&quot: $1/2( 0, 0, 1, 1, 0 )$,&$  y_1^3 + y_2^3 + y_3y_4y_5$\\
&&    $1/6( 0, 4, 2, 5, 5 )$&\\
\end{tabular}
\caption{Delsarte quasismooth good pairs in $\pp^4$}
\label{tab_P11111}
\end{table}}

\begin{example} 
Let $Z=\pp(1,1,1,1,2)$ and consider the quasismooth family of K3 surfaces in $Z$ with equations (see Notation \ref{coef}):
\[  
\begin{cases}
g_1=x_1x_2x_3+ x_2^3  +x_4^3 + x_3x_5=0\\
g_2=x_1^3 + x_2x_3^2 + x_4x_5=0.
\end{cases}
\]
The dual family of K3 surfaces  is defined by the following equations in $Z^*=\pp( 7,5,15, 9,6 )$:
\[
\begin{cases}
g_1^*=y_1y_2y_3+y_5^2y_3+y_1^3y_5+y_4^3=0\\
g_2^*=y_3+y_2^3+y_4y_5=0.
\end{cases}
\]
Observe that the variable $y_3$ can be eliminated in the second equation and one obtains the family of hypersurfaces in $\pp(7,5,6,9)$ defined by
\[
y_2^3y_4^2+y_5y_4^3+y_1^3y_4+y_5^3+y_1y_2^4+y_1y_2y_4y_5=0.
\]
In \cite{Belcastro-paper} Belcastro considered K3 hypersurfaces in weighted projective spaces and studied lattice-theoretical mirror symmetry between them (see \cite{LPK3}).
The weighted projective space $\pp(5,6,7,9)$ appears in \cite[Table 3, case 84]{Belcastro-paper} and a general K3 hypersurface has Picard lattice of rank $18$ isometric to $U\oplus E_8\oplus A_8$, so that 
the general lattice-theoretical mirror surfaces should have Picard lattice of rank $2$, with quadratic form $w_{3,2}^{-1}$ (see \cite[Definition 2.3]{Belcastro-paper}). 
However, the mirror family is not in the list of \cite[Table 3]{Belcastro-paper}, i.e. it is not given by hypersurfaces in weighted projective spaces.
 We now show that the Picard lattice of the minimal resolution of a general K3 surface $X$ defined as before by $g_1=g_2=0$ in $\pp(1,1,1,1,2)$ contains a rank two sublattice with quadratic form $w_{3,2}^{-1}$,
thus giving the missing mirror in Belcastro list.

The projection $\psi:\pp(1,1,1,1,2)\to \pp^3, (x_1,\dots,x_5)\mapsto (x_1,\dots,x_4)$ induces an isomorphism between $X\backslash \{p\}$, where $p=(0,0,0,0,1)$, and $S\backslash L$, where $S$ is
 the quartic surface  of equation
\[
x_4(x_1x_2x_3+x_2^3+x_4^3)=x_3(x_1^3+x_2x_3^2)=0
\]
and $L$ is the line $x_3=x_4=0$. 
In fact $X$ has a singularity of type $A_1$ at $p$ 
and the resolution of $\psi$ induces an isomorphism between the minimal resolution $\tilde X$ of $X$ and  $S$.
Thus ${\rm Pic}(\tilde X)\cong {\rm Pic}(S)$ contains the sublattice with intersection matrix
$$
\begin{pmatrix}4&1\\1&-2\end{pmatrix},
$$
which corresponds to the quadratic form $w_{3,2}^{-1}$  \cite[Appendix A]{belcastro-thesis}. 
This process can be repeated to find the lattice mirror of other families of K3 surfaces in \cite[Table 3]{Belcastro-paper}.
\end{example}

\appendix
\section{Magma library}\label{magma}
We developed a library in MAGMA \cite{Magma} 
to explore aspects of the theory and to compute examples in the paper, either in terms of polytopes or of equations. 
These can be found in the webpage

\begin{center}    
\url{https://shorturl.at/vG48I}
\end{center}

As for quasismoothness, 
the  function \texttt{IsQsCI} checks quasismoothness of the complete intersection as in Definition \ref{qsci}.
In computations, we also use the necessary conditions of items \cite[Corollary 8.8, (ii) and (iii)]{F}  to eliminate cases which are not quasismooth.
The function \texttt{IsCY} checks the hypothesis of Theorem \ref{thmCY}.

Given a toric variety $Z$ and a partition of the vertices of the polar of the anticanonical polytope of $Z$, the function \texttt{GNP} checks if this is a generalized nef partition according to Definition \ref{gnp2}, whereas the function \texttt{AllGNP} finds all partitions defining a generalized nef partition for the variety $Z$.

If $\Pi(\Delta)$ is a generalized nef partition, the function \texttt{IsIrreducibleGNP} checks irreducibility of $\Pi(\Delta)$ as in Definition \ref{def-reducible}, whereas  \verb|Dual_partition| constructs the dual partition.
Given a pair of generalized nef partitions $(\Pi(\Delta^1), \Pi(\Delta^2))$, the function \texttt{Goodpairs} checks if they form a good pair of generalized nef partitions as in Definition \ref{goodpairs}. The functions \texttt{Eqs} and \verb|Eqs_to_pol| find the equations of a complete intersection asosciated to a good pair of generalized nef partitions and viceversa, respectively.

\bibliographystyle{plain}
\bibliography{Biblioqs}

\end{document}